\documentclass[reqno]{amsart}

\usepackage[T1]{fontenc}
\usepackage{amsmath,amssymb,amsthm,mathtools}
\usepackage{enumitem}
\usepackage{microtype}
\usepackage[colorlinks=true,linkcolor=blue,citecolor=blue,urlcolor=blue]{hyperref}

\makeatletter
\renewcommand{\@settitle}{%
  \begin{center}%
    \baselineskip14\p@\relax
    \bfseries\@title
  \end{center}%
}
\makeatother

\newtheorem{thm}{Theorem}[section]
\newtheorem{prop}[thm]{Proposition}
\newtheorem{lem}[thm]{Lemma}
\newtheorem{cor}[thm]{Corollary}
\theoremstyle{definition}

\newtheorem{question}[thm]{Open question}
\theoremstyle{remark}
\newtheorem{rem}[thm]{Remark}

\newcommand{\R}{\mathbb{R}}
\newcommand{\E}{\mathbb{E}}
\renewcommand{\Pr}{\mathbb{P}}
\newcommand{\osc}{\operatorname{osc}}
\newcommand{\diam}{\operatorname{diam}}
\newcommand{\cE}{\mathcal{E}}
\newcommand{\cD}{\mathcal{D}}
\newcommand{\one}{\mathbf{1}}

\title[Consensus time for $\ell^p$ relaxation]{Consensus time for asynchronous $\ell^p$ relaxation: graph dependence}
\author{Chenyu Gan}
\address{Qiuzhen College, Tsinghua University, Beijing, China}
\email{gancy22@mails.tsinghua.edu.cn}
\subjclass[2020]{Primary 60J10; Secondary 05C81, 35J92, 90C25}
\keywords{asynchronous consensus, graph $p$-Laplacian, randomized coordinate
minimization, nonlinear Poincar\'e inequality, conductance, trees}
\date{}

\begin{document}
\raggedbottom

\begin{abstract}
We study the asynchronous $\ell^p$ relaxation introduced by Amir,
Nazarov, and Peres
\cite{amir2025convergencerateellpenergyminimization}: at each step, a uniformly
chosen vertex minimizes its incident $\ell^p$ energy.  For the profile $f_t$
after $t$ updates, let
\[
 \mathsf T_p(G,1/2):=
 \sup_{\lVert f_0\rVert_\infty\le1}
 \E\bigl[\min\{t\ge0:\osc(f_t)\le1/2\}\bigr].
\]
For $1<p<\infty$, we obtain graph-dependent estimates for boxes, trees, and
conductance expanders.  On the nearest-neighbor box $[L]^d$, for $d,L\ge2$
and $n=L^d$, the answer is, up to logarithmic factors,
$nd^{1/(p-1)}L^{p/(p-1)}$ for $1<p<2$ and $ndL^2$ for $p\ge2$.
On bounded-degree trees, an explicit rerooting-invariant parameter $T_G$
determines the answer up to logarithmic factors; for arbitrary trees it gives
upper and lower bounds that differ additionally by the maximum degree.  If
the volume conductance $h(G)\ge h_0>0$, then
$\mathsf T_p(G,1/2)=\Theta_{p,h_0}(n\log n)$ without a degree assumption.
At $p=\infty$, every connected graph satisfies
$\mathsf T_\infty(G,1/2)\ge c nD^2/\Delta$, where $D$ and $\Delta$ are its
diameter and maximum degree.
\end{abstract}

\maketitle

\section{Introduction}\label{sec:intro}

Amir, Nazarov, and Peres introduced the asynchronous $\ell^p$ relaxation
\cite{amir2025convergencerateellpenergyminimization}.  For fixed
$1<p<\infty$, they proved that the worst fixed-tolerance consensus time over
connected $n$-vertex graphs has polynomial exponent
\[
 \beta_p=\max\left\{\frac{2p}{p-1},3\right\},
\]
up to logarithmic factors.  More precisely, their bound is uniform over every
initial profile, and they construct graphs and profiles attaining the exponent;
they also give sharp refinements in terms of average degree.  In particular,
the worst-case behavior undergoes a phase transition at $p=3$.

The quadratic case already shows why a worst-case graph bound does not tell the
whole story.  When $p=2$, the update is ordinary averaging and the dynamics are
linear.  If $\mu_1(G)$ is the first positive eigenvalue of the normalized
Laplacian and $t_{\mathrm{rel}}(G)=\mu_1(G)^{-1}$ is the random-walk relaxation
time, then the spectral estimates of Elboim, Peres, and Peretz
\cite{elboim2024asynchronousdegrootdynamics}, expressed in single-vertex
updates, imply that the fixed-tolerance worst-profile consensus time lies
between
\[
 c n t_{\mathrm{rel}}(G)
 \quad\text{and}\quad
 C n t_{\mathrm{rel}}(G)\log(en);
\]
thus the relaxation time characterizes the polynomial scale of quadratic
consensus, up to one logarithmic factor.  Since $t_{\mathrm{rel}}(G)$ depends
strongly on the graph, consensus can be much faster than the $n^4$ worst case:
an expander graph has an $O(n\log n)$ upper bound.

This contrast motivates a graph-specific analysis beyond the linear case.  We
ask how dimension, branching, conductance, and diameter control consensus when
the spectral description is no longer available.  Our main results answer this
question for three basic families.  For boxes we determine the correct scale up
to logarithmic factors, with different dimension dependence below and above
$p=2$.  For trees we give upper and lower bounds through a rerooting-invariant
recursive parameter $T_G$.  For every fixed $1<p<\infty$, conductance expanders
have consensus time $\Theta(n\log n)$ without a maximum-degree assumption.  We
also prove an every-graph diameter lower bound at the endpoint $p=\infty$.
Beyond these family results, for every $2\le p<\infty$ the worst-profile consensus time
on an arbitrary graph is at most the corresponding quadratic consensus time,
up to a $p$-dependent constant and one logarithmic factor; see
Corollary~\ref{cor:universal-p-comparison} below.

\subsection{The dynamics and global notation}

Let $G=(V,E)$ be a finite connected simple graph with $n=|V|\ge2$,
maximum degree $\Delta$, and diameter $D=\diam(G)$.  We write $v\sim u$
when $\{u,v\}\in E$ and $\deg(u)$ for the degree of $u$.  A profile is a
function $f:V\to\R$.  For $1<p<\infty$, define the local $p$-mean of the
neighboring values $a=(a_v)_{v\sim u}$ by
\begin{equation}\label{eq:intro-local-mean}
 m_p(a)=\mathop{\rm argmin}_{x\in\R}\sum_{v\sim u}|x-a_v|^p.
\end{equation}
The minimizer is unique by strict convexity.  At the endpoint $p=\infty$,
we use the midrange rule
\begin{equation}\label{eq:intro-midrange}
 m_\infty(a)=\frac{\max_{v\sim u}a_v+\min_{v\sim u}a_v}{2}.
\end{equation}

Let $U_1,U_2,\ldots$ be independent vertices chosen uniformly from $V$.
Starting from a deterministic profile $f_0$, the asynchronous $\ell^p$ relaxation dynamics is defined explicitly by
\begin{equation}\label{eq:intro-dynamics}
 f_t(x)=
 \begin{cases}
 m_p\bigl((f_{t-1}(v))_{v\sim U_t}\bigr),&x=U_t,\\
 f_{t-1}(x),&x\ne U_t,
 \end{cases}
 \qquad t\ge1.
\end{equation}
Thus one selected vertex is relaxed at each discrete time; no vertices are
frozen.  We write $f^{\#u}$ for the profile obtained by applying this update
at a specified vertex $u$, and $Sf$ for the synchronous profile with
$(Sf)(u)=m_p((f(v))_{v\sim u})$ at every $u$.

The oscillation, the consensus time at tolerance $\epsilon>0$, and its
worst-profile expectation are
\begin{equation}\label{eq:intro-consensus-time}
 \begin{gathered}
 \osc(f)=\max_{u\in V}f(u)-\min_{u\in V}f(u),\qquad
 \tau_p(\epsilon;f_0)=\min\{t\ge0:\osc(f_t)\le\epsilon\},\\
 \mathsf T_p(G,\epsilon)=
 \sup_{\lVert f_0\rVert_\infty\le1}\E[\tau_p(\epsilon;f_0)].
 \end{gathered}
\end{equation}
All probabilities and expectations, denoted by $\Pr$ and $\E$, refer to the
random choices $(U_t)$.  All logarithms are natural.  Constants $c,C>0$ are
absolute, while subscripts record their permitted parameter dependence; for
example, $c_p$ and $C_{p,\eta}$ may depend only on $p$ and on $(p,\eta)$,
respectively.  The notation $A\lesssim_p B$ means $A\le C_pB$.

Our graph-family results concern fixed $1<p<\infty$.  The rule
\eqref{eq:intro-midrange} is the asynchronous Lipschitz-learning dynamics,
and we also give a separate diameter lower bound for that endpoint.

We now state the graph-family results in detail.

\subsection{Boxes}

Take $G$ to be the nearest-neighbor box with vertex set
\[
 V=[L]^d=\{0,\ldots,L-1\}^d,
\]
where $d,L\ge2$ are integers, and write $n=L^d$.  Define
\begin{equation}\label{eq:intro-box-scale}
 U_p(d,L)=
 \begin{cases}
  nd^{1/(p-1)}L^{p/(p-1)},&1<p<2,\\
  ndL^2,&p\ge2.
 \end{cases}
\end{equation}
Theorem~\ref{thm:box-bounds} and
Corollary~\ref{cor:box-polylogarithmic-comparison} show that
\begin{equation}\label{eq:intro-box-comparison}
 c_p\frac{U_p(d,L)}{\log(en)}
 \le \mathsf T_p(G,1/2)
 \le C_pU_p(d,L)\log(en),\qquad p\ne2,
\end{equation}
whereas
\begin{equation}\label{eq:intro-box-comparison-two}
 cU_2(d,L)
 \le \mathsf T_2(G,1/2)
 \le CU_2(d,L)\log(en).
\end{equation}
Thus boxes are sharp up to logarithmic factors: the displayed upper/lower
ratio is $O_p((\log n)^2)$ for $p\ne2$ and $O(\log n)$ for $p=2$.  The
detailed proved lower bound is
\begin{equation}\label{eq:intro-box-detailed-lower}
 \mathsf T_p(G,1/2)\ge c_pn
 \begin{cases}
 \max\{d^{1/(p-1)-1}L^{p/(p-1)},d\log L\},&1<p<2,\\
 dL^2,&p=2,\\
 \max\{d\log L,L^2\},&p>2.
 \end{cases}
\end{equation}
The terms in a maximum may be realized by different initial profiles.  The restriction
$d\ge2$ is substantive for the subquadratic slow profile: on a path, a
two-neighbor $p$-mean is always the midpoint, so the displayed
$d$-dimensional mechanism does not extend to $d=1$.

There are related one-dimensional antecedents in Amir et al.
\cite[Theorem~5.1 and Claim~5.5]{amir2025convergencerateellpenergyminimization}:
their cycle example gives a cubic lower bound for every $p>1$, and their
second proof uses a quadratic slow profile on a line segment.  The new feature
of the present box construction is the $d$-dimensional inactive-neighbor
mechanism and its explicit dimension dependence.

\subsection{Trees}

Let $G$ now be a tree and root it at $r$.  A vertex with no children has
$w_r(v)=1$; recursively, a nonleaf has
\[
 w_r(v)=\left(\sum_{z\text{ child of }v}
 (w_r(z)+1)^{p-1}\right)^{1/(p-1)}.
\]
Writing $\operatorname{Path}(x,r)$ for the unique path from $x$ to $r$, set
\[
 T_G(r)=\max_{x\in V}
 \sum_{\substack{v\in\operatorname{Path}(x,r)\\v\ne r}}w_r(v),
 \qquad T_G=\min_{r\in V}T_G(r).
\]
Thus $T_G$ is independent of a distinguished drawing of the tree.  The
dependence on the fixed exponent is suppressed in the notation: throughout
the tree results, $T_G=T_G^{(p)}$.

At tolerance $1/2$, Theorem~\ref{thm:tree-gap} and
Corollary~\ref{cor:tree-consensus-upper} give the upper bound
\[
 \mathsf T_p(G,1/2)
 \le C_pnT_G\log(en)
 \begin{cases}
  \log(en)^{1/(p-1)},&1<p<2,\\
  1,&p\ge2,
 \end{cases}
\]
while Theorem~\ref{thm:tree-lower} and
Corollary~\ref{cor:tree-worst-lower} give
\[
 \mathsf T_p(G,1/2)
 \ge c_pn\max\left\{\log n,\frac{T_G}{\Delta}\right\},
\]
for every finite $p>1$.
In particular, if $\Delta\le\Delta_0$ for a fixed constant $\Delta_0$, then
$T_G^{(p)}$ characterizes the consensus scale up to one factor $\log(en)$ for
$p\ge2$, and up to $C_{p,\Delta_0}\log(en)^{1+1/(p-1)}$ for $1<p<2$.
For unrestricted-degree trees the displayed estimates differ by a factor
$\Delta$, which can be polynomial in $n$; we therefore do not claim a
polylogarithmic characterization for arbitrary trees.

The recursive parameter is explicit on spherically symmetric trees.  Suppose
the tree has layers $0,\ldots,k$, every layer-$i$ vertex has $d_i\ge1$ children for
$0\le i<k$, and the distinguished root has $d_0\ge2$ children.  Then the
natural root minimizes $T_G(r)$ and
Proposition~\ref{prop:symmetric-tree-geometry} gives, with
\[
 S=\sum_{1\le i\le j\le k-1}(d_i\cdots d_j)^{1/(p-1)},
\]
the explicit comparison $1+S\le T_G\le1+2S$ (the sum is empty for $k=1$).

As a concrete instance, Amir et al.
\cite[Theorem~5.7]{amir2025convergencerateellpenergyminimization} previously
treated the tree $H_m$ formed from a path of length $2m$ with $2m$ leaves
attached at each endpoint.  Rooting at the midpoint gives the symmetric
branching sequence
\[
 k=m+1,\qquad d_0=2,\qquad d_1=\cdots=d_{m-1}=1,
 \qquad d_m=2m.
\]
Consequently,
\[
 S=\frac{m(m-1)}2+m(2m)^{1/(p-1)}.
\]
Since $|V(H_m)|=6m+1$ and $\Delta(H_m)=2m+1$, for $1<p\le2$ the comparison
above gives $T_{H_m}\asymp_p m^{p/(p-1)}$.  Thus our upper bound recovers
their sharp $m^{(2p-1)/(p-1)}$ exponent up to logarithmic factors.  On this
example our general propagation lower bound gives only order
$m^{p/(p-1)}$, a factor of order $m$ smaller, and hence does not subsume their
special-tree lower bound.

\subsection{Conductance expanders}

For $A\subseteq V$, let
$\operatorname{vol}(A)=\sum_{u\in A}\deg(u)$ and let $\partial A$ be the set
of edges with exactly one endpoint in $A$.  The volume conductance is
\[
 h(G)=\min_{0<\operatorname{vol}(A)\le\operatorname{vol}(V)/2}
 \frac{|\partial A|}{\operatorname{vol}(A)}.
\]
Theorem~\ref{thm:expander-consensus} proves that, for every fixed
$1<p<\infty$, every family satisfying $h(G)\ge h_0>0$ has
\begin{equation}\label{eq:intro-expander}
 \mathsf T_p(G,1/2)
 =\Theta_{p,h_0}(n\log n).
\end{equation}
There is no maximum-degree assumption.  The coarea inequality used to obtain
nonlinear Poincar\'e coercivity is a graph $p$-Cheeger argument of the standard
type studied by Keller and Mugnolo \cite{KellerMugnolo2016} and by Tudisco and
Hein \cite{TudiscoHein2018}; the contribution here is its conversion, through
the exact one-site drop and the coupon obstruction, into a sharp consensus-time
statement.

\subsection{The Lipschitz-learning endpoint}

When $p=\infty$, \eqref{eq:intro-dynamics} uses the midrange
\eqref{eq:intro-midrange}.  Amir et al.
\cite[Theorem~1.3(a)]{amir2025convergencerateellpenergyminimization} prove that,
for $0<\epsilon\le1$,
\begin{equation}\label{eq:intro-lipschitz-upper}
 \mathsf T_\infty(G,\epsilon)
 \le n(\log n+1)(D+1)^2\log(4/\epsilon).
\end{equation}
Here the factor $\log(4/\epsilon)$ results from rescaling their
$[0,1]^V$ normalization to $[-1,1]^V$.  Their Theorem~1.3(b) also shows that,
for every $N\ge L+1$ and $L\ge2$, there is a connected graph with at most $N$ vertices
and diameter $L$, together with an initial profile, whose consensus time is at
least $cNL^2$ for every update sequence.  We use the usual edge-count graph
distance, under which diameter $L$ requires at least $L+1$ vertices; this
explains the admissible range stated here.  Our
Theorem~\ref{thm:lipschitz-diameter-lower} proves the complementary
every-graph statement: for every finite connected graph there exists
$f_0\in[-1,1]^V$ such that
\begin{equation}\label{eq:intro-lipschitz-lower}
 \E\tau_\infty(\epsilon;f_0)
 \ge \frac{1}{16e\pi^2}\frac{nD^2}{\Delta},
 \qquad 0<\epsilon\le1.
\end{equation}
Thus our result strengthens the existential graph quantifier in the earlier
diameter lower bound to every connected graph, at the cost of the factor
$\Delta$.  At fixed tolerance, the $nD^2$ polynomial scale is matched up to
that factor and a logarithm in $n$; in particular, for bounded-degree graphs
the diameter dependence is sharp up to that logarithm.

\subsection{Common framework and organization}

To define the two parameters used in the general upper bound, fix
$1<p<\infty$ and put
\[
 \cE_p(f)=\sum_{\{u,v\}\in E}|f(u)-f(v)|^p,
 \qquad
 \cD_p(f,u)=\cE_p(f)-\cE_p(f^{\#u}).
\]
The exact relaxation gap and the degree-weighted scalar Poincar\'e constant are
\begin{align*}
 \lambda_p(G)
 &=\inf_{\cE_p(f)>0}
 \frac{\sum_{u\in V}\cD_p(f,u)}{\cE_p(f)},\\
 \Phi_p(G)
 &=\sup_{f\ \mathrm{nonconstant}}
 \frac{\inf_{c\in\R}\sum_{u\in V}\deg(u)|f(u)-c|^p}{\cE_p(f)}.
\end{align*}
The first parameter is exactly the multiplicative one-step contraction
coefficient for randomized exact coordinate minimization, since
\[
 \E[\cE_p(f_1)\mid f_0=f]
 =\cE_p(f)-\frac1n\sum_{u\in V}\cD_p(f,u).
\]
Theorem~\ref{thm:variational-upper} bounds the consensus time using
$\lambda_p(G)$, and Theorem~\ref{thm:nonlinear-poincare-comparison} proves
\[
 \lambda_p(G)\gtrsim_p
 \begin{cases}
  \Phi_p(G)^{-1/(p-1)},&1<p<2,\\
  \Phi_p(G)^{-2/p},&p\ge2.
 \end{cases}
\]
The exact one-site drop avoids the singular quotients that arise when
neighboring values coincide.  The lower bounds used later are the unnumbered
coupon observation and the profile-sensitive propagation theorem in
Section~\ref{sec:lower}; the latter also supplies the Lipschitz-learning
lower bound \eqref{eq:intro-lipschitz-lower}.

For $2\le p<\infty$, nonlinear Poincar\'e extrapolation and the quadratic spectral
lower bound further give the graph-universal comparison
\begin{equation}\label{eq:intro-universal-p-comparison}
 \mathsf T_p(G,\epsilon)
 \le C_p\log\!\left(\frac{en}{\epsilon}\right)
       \mathsf T_2(G,\epsilon),
 \qquad 0<\epsilon\le\frac12;
\end{equation}
see Corollary~\ref{cor:universal-p-comparison} and
\eqref{eq:Tp-vs-T2}.  The suprema on the two sides may be attained by different
initial profiles, so this is a worst-profile partial answer, rather than an
answer to the same-profile comparison in
\cite[Question~6.1]{amir2025convergencerateellpenergyminimization}.

Section~\ref{sec:related} discusses previous work.
Section~\ref{sec:preliminaries} records the model and the elementary
properties of the local means.  Sections~\ref{sec:upper} and
\ref{sec:lower} give the general upper and lower mechanisms actually used;
Section~\ref{sec:lower} also proves the endpoint diameter lower bound.
Sections~\ref{sec:boxes}, \ref{sec:trees}, and \ref{sec:expanders} treat the
three finite-$p$ graph families in that order.  Section~\ref{sec:conclusion}
summarizes the remaining open problems.

\section{Related work}\label{sec:related}

At $p=2$, the update is ordinary averaging, so the process is the
vertex-asynchronous DeGroot dynamics.  DeGroot introduced the synchronous
model for opinion pooling \cite{DeGroot1974}, and Golub and Jackson studied
network influence and the ``wisdom of crowds'' in that setting
\cite{GolubJackson2010}.  For asynchronous vertex updates, Elboim et al.
\cite{elboim2024asynchronousdegrootdynamics} proved spectral upper and
lower bounds which, in our discrete-update normalization, place the
worst-profile consensus time at scale $n t_{\mathrm{rel}}(G)$ up to a
logarithmic factor.  Randomized gossip, studied by Boyd, Ghosh, Prabhakar, and
Shah \cite{BoydGhoshPrabhakarShah2006}, instead averages the endpoints of an
activated edge, so its spectral averaging-time estimates do not directly apply
to our all-neighbor vertex update.

For general $1<p<\infty$, Amir et al.
\cite{amir2025convergencerateellpenergyminimization} introduced the
boundary-free asynchronous $\ell^p$ relaxation and determined, up to
logarithmic factors, the worst fixed-tolerance consensus-time exponent
$\max\{2p/(p-1),3\}$ over connected $n$-vertex graphs, with refinements in
terms of average degree.  Their work also contains the local-residual descent
estimate and the cycle, line-segment, and two-ended-tree lower-bound mechanisms
used as antecedents here.  We instead seek graph-specific bounds: our
exact-drop coefficient $\lambda_p(G)$ and its comparison with the nonlinear
Poincar\'e constant $\Phi_p(G)$ yield results for boxes, arbitrary trees, and
conductance expanders, as well as a worst-profile comparison with $p=2$ for
$p\ge2$.  Gan, Peres, and Zuo
\cite{gan2025convergencerateellprelaxationgraph} subsequently studied the
Dirichlet version, with a frozen nonempty boundary and convergence to the
$p$-harmonic extension, and proved sharp worst-case and
average-degree-refined approximation bounds.  Their boundary-value results
and our boundary-free consensus estimates are complementary.

At the endpoint $p=\infty$, the update is the midrange rule from Lipschitz
learning.  Amir et al.
\cite{amir2025convergencerateellpenergyminimization} proved an
$O(n\log n\,D^2)$ fixed-tolerance upper bound and constructed graphs attaining
the $nD^2$ scale.  Our diameter lower bound applies to every connected graph,
at the cost of a factor of the maximum degree.  Peres, Schramm, Sheffield, and
Wilson connected infinity-harmonic functions to tug-of-war games
\cite{PeresSchrammSheffieldWilson2009}; Oberman developed a convergent scheme
for absolutely minimizing Lipschitz extensions \cite{Oberman2005}; and Kyng,
Rao, Sachdeva, and Spielman developed graph algorithms for Lipschitz learning
\cite{KyngRaoSachdevaSpielman2015}.

These connections suggest potential applications of the dynamics.  The
boundary-free $\ell^p$ process is a nonlinear asynchronous consensus rule and
an exact randomized coordinate-minimization scheme for graph $\ell^p$
energies, which may be relevant to distributed agreement and opinion
aggregation.  With frozen labels, closely related graph $p$-Laplacian methods
are used for image and data processing by Elmoataz, Toutain, and Tenbrinck
\cite{ElmoatazToutainTenbrinck2015}, and for image processing and clustering by
Elmoataz, Desquesnes, and Toutain
\cite{ElmoatazDesquesnesToutain2017}.  Slep\v{c}ev and Thorpe analyzed
$p$-Laplacian regularization in semi-supervised learning
\cite{SlepcevThorpe2019}, while Flores, Calder, and Lerman studied
$\ell^p$-based semi-supervised learning on graphs
\cite{FloresCalderLerman2022}.  At $p=\infty$, the midrange dynamics is a
natural iterative rule for Lipschitz learning and graph interpolation.
Our results concern convergence to a constant, so their dependence on
dimension, branching, conductance, and diameter provides prospective guidance
for those boundary-value and regularized problems rather than an
iteration-complexity guarantee for them.

\section{Model, notation, and preliminaries}\label{sec:preliminaries}

Throughout, $G=(V,E)$ is a finite connected simple graph with
$n=|V|\ge2$, maximum degree $\Delta$, and diameter
$D=\diam(G)$.  Each edge in a sum over $E$ is counted once.

\subsection{Local means and update operators}

For $1<p<\infty$ and a nonempty vector $y=(y_1,\ldots,y_d)$, define its
local $p$-mean by
\begin{equation}\label{eq:p-mean}
 m_p(y)=\mathop{\rm argmin}_{x\in\R}\sum_{i=1}^d |x-y_i|^p.
\end{equation}
Strict convexity makes the minimizer unique.
At the endpoint $p=\infty$, we use the midrange convention
\begin{equation}\label{eq:infinity-mean}
 m_\infty(y)=\frac{\max_i y_i+\min_i y_i}{2}.
\end{equation}

\begin{lem}\label{lem:mean-properties}
For $p\in(1,\infty]$, the map $m_p$ has the following properties.
\begin{enumerate}[label=(\alph*)]
\item It is continuous and lies between the smallest and largest coordinate.
\item It is positively affine equivariant: for every $\alpha>0$ and
$a\in\R$,
\[
 m_p(\alpha y+a\one)=\alpha m_p(y)+a.
\]
\item It is coordinatewise monotone.  Consequently,
\[
 |m_p(y)-m_p(z)|\le \lVert y-z\rVert_\infty
\]
whenever $y$ and $z$ have the same number of coordinates.
\end{enumerate}
\end{lem}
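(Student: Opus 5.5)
We treat $p=\infty$ directly from the explicit formula \eqref{eq:infinity-mean}. For $1<p<\infty$ we work with the first-order condition. Put $F_y(x)=\sum_i|x-y_i|^p$. This function is strictly convex and $C^1$, with derivative
\[
 F_y'(x)=p\sum_i \phi(x-y_i),\qquad \phi(s)=|s|^{p-2}s .
\]
Here $\phi$ is continuous and strictly increasing. So $m_p(y)$ is the unique zero of the strictly increasing function $g_y(x)=\sum_i\phi(x-y_i)$.

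For (a), the localization is immediate. If $x<\min_i y_i$, every term of $g_y(x)$ is negative; if $x>\max_i y_i$, every term is positive. Hence the zero lies in $[\min y,\max y]$. For continuity, take $y^{(k)}\to y$. The zeros $x_k=m_p(y^{(k)})$ stay in a bounded interval, and any limit point $x^*$ satisfies $g_y(x^*)=0$ because $(x,y)\mapsto g_y(x)$ is jointly continuous. Uniqueness of the zero then gives $x^*=m_p(y)$, which proves convergence. For $p=\infty$ both claims are obvious, since max, min, and averaging are continuous and the midrange lies between the min and the max.

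For (b), positive homogeneity gives $\phi(\alpha s)=\alpha^{p-1}\phi(s)$. Therefore
\[
 g_{\alpha y+a\one}(\alpha x+a)=\alpha^{p-1}g_y(x),
\]
so $\alpha m_p(y)+a$ is a zero of $g_{\alpha y+a\one}$, and uniqueness identifies it with $m_p(\alpha y+a\one)$. At $p=\infty$, max and min commute with positive affine maps, so the same identity holds.

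For (c), suppose $y\le z$ coordinatewise and put $x=m_p(y)$. Since $\phi$ is increasing, each term satisfies $\phi(x-z_i)\le\phi(x-y_i)$, so $g_z(x)\le g_y(x)=0$. Because $g_z$ is increasing, its zero satisfies $m_p(z)\ge x$. At $p=\infty$, monotonicity holds because max and min are each monotone.

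The Lipschitz bound then follows from (b) and (c) alone. Let $\delta=\lVert y-z\rVert_\infty$. Then $z-\delta\one\le y\le z+\delta\one$. Applying monotonicity and then translation equivariance (the case $\alpha=1$) gives
\[
 m_p(z)-\delta\le m_p(y)\le m_p(z)+\delta .
\]
No step is a real obstacle. The only points needing care are the joint-continuity and compactness argument for continuity in (a), and the fact that $\phi$ is continuous at $0$ even when $1<p<2$, which holds since $|s|^{p-1}\to0$ as $s\to0$.
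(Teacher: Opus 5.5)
Your proof is correct and takes essentially the paper's route: you characterize $m_p(y)$ as the unique zero of the strictly increasing function $x\mapsto\sum_i\phi(x-y_i)$, read off localization, equivariance and monotonicity from it, and sandwich $y$ between $z\pm\delta\one$ to get the Lipschitz bound. The differences are cosmetic. You check homogeneity through $\phi(\alpha s)=\alpha^{p-1}\phi(s)$ rather than by rescaling the objective. You also spell out the limit-point argument for continuity, which the paper leaves implicit.
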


\begin{proof}
The assertions follow directly from \eqref{eq:infinity-mean} when
$p=\infty$.  Suppose now that $p<\infty$.
Set $\varphi_p(s)=|s|^{p-2}s$, with $\varphi_p(0)=0$.  The minimizer in
\eqref{eq:p-mean} is the unique zero of the continuous strictly increasing
function
\[
 x\longmapsto \sum_{i=1}^d\varphi_p(x-y_i).
\]
This description proves continuity and the interval assertion.  Translation
covariance follows by shifting both the argument and every coordinate by
$a$.  Moreover,
\[
 \sum_{i=1}^d|x-\alpha y_i|^p
 =\alpha^p\sum_{i=1}^d|x/\alpha-y_i|^p,
 \qquad \alpha>0,
\]
so uniqueness of the minimizer gives positive homogeneity.  Combining the
two identities proves positive affine equivariance.  If $y_i\le z_i$ for
every $i$, then the function associated with $y$ is pointwise at least the
function associated
with $z$, so their zeros satisfy $m_p(y)\le m_p(z)$.  Finally, if
$\delta=\lVert y-z\rVert_\infty$, then
$y-\delta\one\le z\le y+\delta\one$; monotonicity and translation covariance
give the last assertion.
\end{proof}

For a profile $f:V\to\R$ and $u\in V$, let $f^{\#u}$ denote the profile
obtained by updating only $u$:
\begin{equation}\label{eq:single-update}
 f^{\#u}(x)=
 \begin{cases}
 m_p\bigl((f(v))_{v\sim u}\bigr),&x=u,\\
 f(x),&x\ne u.
 \end{cases}
\end{equation}
The synchronous update $Sf$ is defined by
\begin{equation}\label{eq:synchronous-update}
 (Sf)(u)=m_p\bigl((f(v))_{v\sim u}\bigr),\qquad u\in V.
\end{equation}
No boundary vertices are frozen.

Let $(U_t)_{t\ge1}$ be independent uniformly distributed vertices of $V$.
Starting from a deterministic profile $f_0$, the asynchronous dynamics is
\begin{equation}\label{eq:asynchronous-update}
 f_t=f_{t-1}^{\#U_t},\qquad t\ge1.
\end{equation}
All probabilities and expectations below refer to this update sequence.  We
write
\[
 \osc(f)=\max_{u\in V}f(u)-\min_{u\in V}f(u)
\]
and, for $\epsilon>0$, define the stopping time
\begin{equation}\label{eq:consensus-time}
 \tau_p(\epsilon;f_0)
 =\min\{t\ge0:\osc(f_t)\le\epsilon\},
\end{equation}
with the convention that the minimum is $\infty$ if the set is empty.

\begin{lem}\label{lem:oscillation-energy}
Every single-vertex update takes its new value in the range of the old
profile.  Hence $\osc(f_t)$ is nonincreasing.  Moreover, for
$1<p<\infty$,
\begin{equation}\label{eq:energy-controls-oscillation}
 \cE_p(f):=\sum_{\{u,v\}\in E}|f(u)-f(v)|^p
 \ge \frac{\osc(f)^p}{D^{p-1}}.
\end{equation}
\end{lem}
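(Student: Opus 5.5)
The plan is to prove the two assertions of Lemma~\ref{lem:oscillation-energy} separately: the range statement follows from Lemma~\ref{lem:mean-properties}(a), and the energy bound comes from a shortest path between extremal vertices combined with a H\"older (power-mean) inequality.

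For the range statement, let $u=U_t$. By Lemma~\ref{lem:mean-properties}(a), the new value $m_p((f_{t-1}(v))_{v\sim u})$ lies between the smallest and largest neighboring values. These are values of $f_{t-1}$, so the new value lies in $[\min f_{t-1},\max f_{t-1}]$. The same reasoning covers $p=\infty$, since part (a) includes that endpoint. All other coordinates are unchanged. Hence $\max f_t\le\max f_{t-1}$ and $\min f_t\ge\min f_{t-1}$, and so $\osc(f_t)\le\osc(f_{t-1})$.

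For the energy bound, fix $1<p<\infty$.

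\begin{enumerate}[label=(\roman*)]
\item Choose $a,b\in V$ with $f(b)-f(a)=\osc(f)$. If $\osc(f)=0$ there is nothing to prove, so assume $a\ne b$.
\item Since $G$ is connected, take a shortest path $a=x_0,x_1,\ldots,x_k=b$. Its length satisfies $1\le k\le D$.
\item Telescoping and H\"older's inequality (convexity of $s\mapsto s^p$) give
\[
 \osc(f)\le\sum_{i=1}^k|f(x_i)-f(x_{i-1})|
 \le k^{1-1/p}\Bigl(\sum_{i=1}^k|f(x_i)-f(x_{i-1})|^p\Bigr)^{1/p}.
\]
\item The path edges are distinct edges of $E$, and every term of $\cE_p$ is nonnegative. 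So the inner sum is at most $\cE_p(f)$.
\item Raising to the $p$-th power gives $\osc(f)^p\le k^{p-1}\cE_p(f)\le D^{p-1}\cE_p(f)$, which is \eqref{eq:energy-controls-oscillation}.
\end{enumerate}

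No step is a real obstacle. The only points needing care are that the path is simple, so its edges are counted at most once, and that its length is bounded by the diameter $D$.
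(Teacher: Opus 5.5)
Your proposal is correct and follows the paper's proof essentially verbatim: the range assertion from Lemma~\ref{lem:mean-properties}(a), and the energy bound from H\"older's inequality along a shortest path of length at most $D$ joining a maximizer and a minimizer. Your extra remarks (the trivial case $\osc(f)=0$, and simplicity of the path so that no edge is counted twice) are exactly the small points the paper leaves implicit.
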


\begin{proof}
The first assertion follows from Lemma~\ref{lem:mean-properties}.  For the
second, join vertices attaining the maximum and minimum by a shortest path
 of length $\ell\le D$.  H\"older's inequality along this path gives
\[
 \osc(f)^p\le \ell^{p-1}
 \sum_{e\text{ on the path}}|\nabla_e f|^p
 \le D^{p-1}\cE_p(f).
\]
\end{proof}

\section{General upper-bound framework}\label{sec:upper}

\subsection{The exact relaxation gap}

For $1<p<\infty$, define the energy drop at $u$ by
\begin{equation}\label{eq:energy-drop}
 \cD_p(f,u)=\cE_p(f)-\cE_p(f^{\#u}).
\end{equation}
Only edges incident to $u$ change, and \eqref{eq:p-mean} shows that
$\cD_p(f,u)\ge0$.  The exact relaxation gap is
\begin{equation}\label{eq:relaxation-gap}
 \lambda_p(G)=
 \inf_{\cE_p(f)>0}
 \frac{\sum_{u\in V}\cD_p(f,u)}{\cE_p(f)}.
\end{equation}
This definition avoids singular expressions when neighboring values coincide.

\begin{prop}\label{prop:gap-properties}
For every finite connected graph and every $1<p<\infty$,
\[
 0<\lambda_p(G)\le n.
\]
The quotient in \eqref{eq:relaxation-gap} is invariant under translation of
$f$ and multiplication of $f$ by a nonzero scalar.
\end{prop}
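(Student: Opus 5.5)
We treat the three assertions in the order invariance, upper bound, positivity, since the invariance is what makes a compactness argument available for the last one.

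\emph{Invariance.} By Lemma~\ref{lem:mean-properties}(b), $(f+a\one)^{\#u}=f^{\#u}+a\one$ and $(\alpha f)^{\#u}=\alpha f^{\#u}$ for $\alpha>0$. For $\alpha<0$ we use the symmetry $m_p(-y)=-m_p(y)$, which holds because $\sum_i|x+y_i|^p=\sum_i|(-x)-y_i|^p$ and the minimizer is unique. Since $\cE_p$ is translation invariant and $p$-homogeneous, $\cE_p(\alpha f+a\one)=|\alpha|^p\cE_p(f)$ and $\cD_p(\alpha f+a\one,u)=|\alpha|^p\cD_p(f,u)$. Hence the quotient in \eqref{eq:relaxation-gap} is unchanged.

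\emph{Upper bound.} Since $f^{\#u}$ minimizes the local energy, $\cE_p(f^{\#u})\ge0$, so $\cD_p(f,u)\le\cE_p(f)$. Summing over the $n$ vertices gives a quotient at most $n$ for every admissible $f$, hence $\lambda_p(G)\le n$.

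\emph{Positivity.} By invariance, we may restrict the infimum to the set
\[
K=\{f:\textstyle\sum_u f(u)=0,\ \cE_p(f)=1\}.
\]
This set is closed. It is also bounded: connectivity gives $\osc(f)\le D^{(p-1)/p}$ by Lemma~\ref{lem:oscillation-energy}, and the mean-zero condition then bounds $\lVert f\rVert_\infty$. So $K$ is compact. The map $f\mapsto f^{\#u}$ is continuous by Lemma~\ref{lem:mean-properties}(a), so $f\mapsto\sum_u\cD_p(f,u)$ is continuous on $K$ and attains its infimum there.

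It therefore suffices to show that $\sum_u\cD_p(f,u)>0$ whenever $\cE_p(f)>0$. Suppose instead that every $\cD_p(f,u)=0$. Uniqueness of the minimizer in \eqref{eq:p-mean} then forces $f(u)=m_p((f(v))_{v\sim u})$ at every $u$. Take $u$ with $f(u)=\max f$. By Lemma~\ref{lem:mean-properties}(a), $m_p$ lies between the neighbours' minimum and maximum, and a $p$-mean equals the maximum only when all coordinates are equal; indeed the zero condition $\sum_v\varphi_p(x-f(v))=0$ at $x=\max$ requires every term to vanish. So all neighbours of $u$ also attain the maximum. Propagating through the connected graph shows $f$ is constant, contradicting $\cE_p(f)>0$. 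Hence the minimum over $K$ is positive and $\lambda_p(G)>0$.

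The main obstacle is this positivity step, and specifically the compactness normalization. The rest is routine. The strong maximum principle via the strict monotonicity of $\varphi_p$ is the key nondegeneracy input, and the scale invariance is exactly what lets us pass to a compact set.
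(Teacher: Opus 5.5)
Your proof is correct and follows the paper's argument: invariance, the bound $\cD_p(f,u)\le\cE_p(f)$, and positivity via compactness of a normalized slice together with the strong maximum principle for the local $p$-mean. The differences are cosmetic. You normalize by mean zero and bound coordinates through Lemma~\ref{lem:oscillation-energy}, whereas the paper pins $f(o)=0$ and bounds coordinates along paths. You also state explicitly the oddness $m_p(-y)=-m_p(y)$ needed for negative scalars, which the paper leaves implicit.
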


\begin{proof}
Translation invariance and homogeneity of degree $p$ follow directly from the
definitions.  Also $0\le\cD_p(f,u)\le\cE_p(f)$, so the upper bound is
immediate.

Fix a vertex $o$.  By the two invariances, the infimum can be taken over
\[
 \mathcal K=\{f: f(o)=0,\ \cE_p(f)=1\}.
\]
This set is compact: along a path from $o$ to any vertex, every edge
increment has absolute value at most one, so all coordinates are uniformly
bounded.  Lemma~\ref{lem:mean-properties} implies that
$f\mapsto\sum_u\cD_p(f,u)$ is continuous.  If it vanished at some
$f\in\mathcal K$, then every vertex would already equal the $p$-mean of its
neighbors.  At a vertex where $f$ is maximal, this is possible only when all
neighbors have the same maximal value.  Connectivity would then make $f$
constant, contradicting $\cE_p(f)=1$.  The continuous numerator therefore
has a strictly positive minimum on $\mathcal K$.
\end{proof}

\begin{thm}[Variational consensus-time upper bound]\label{thm:variational-upper}
Let $1<p<\infty$, let $f_0\in[-1,1]^V$, and let
$0<\epsilon\le1$.  Then
\begin{equation}\label{eq:variational-upper}
 \E\tau_p(\epsilon;f_0)
 \le \frac{n}{\lambda_p(G)}
 \left[
 \log\!\left(
 \frac{2^p|E|D^{p-1}}{\epsilon^p}
 \right)+3
 \right].
\end{equation}
\end{thm}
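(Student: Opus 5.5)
The argument is a standard multiplicative-drift argument for the energy $\cE_p(f_t)$, combined with the energy--oscillation comparison of Lemma~\ref{lem:oscillation-energy}. Write $\lambda=\lambda_p(G)$ and $X_t=\cE_p(f_t)$.

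\emph{One-step contraction.} Conditionally on $f_{t-1}=f$, the vertex $U_t$ is uniform, so
\[
 \E[X_t\mid f_{t-1}=f]=\cE_p(f)-\frac1n\sum_{u\in V}\cD_p(f,u)\le\Bigl(1-\frac{\lambda}{n}\Bigr)\cE_p(f),
\]
by the definition \eqref{eq:relaxation-gap}. When $\cE_p(f)=0$ the inequality is trivial. Iterating gives
\[
 \E X_t\le(1-\lambda/n)^tX_0\le e^{-\lambda t/n}X_0 .
\]

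\emph{Initial energy and the target level.} Since $f_0\in[-1,1]^V$, every edge increment is at most $2$ in absolute value, so $X_0\le 2^p|E|$. By \eqref{eq:energy-controls-oscillation}, $\osc(f_t)>\epsilon$ forces $X_t>\epsilon^p/D^{p-1}=:\eta$. Moreover, Lemma~\ref{lem:oscillation-energy} says $\osc(f_t)$ is nonincreasing, so $\{\tau>t\}=\{\osc(f_t)>\epsilon\}\subseteq\{X_t>\eta\}$. Markov's inequality then gives
\[
 \Pr(\tau>t)\le \min\{1,\,e^{-\lambda t/n}M\},\qquad M:=\frac{2^p|E|D^{p-1}}{\epsilon^p}.
\]

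\emph{Summing the tail.} Use $\E\tau=\sum_{t\ge0}\Pr(\tau>t)$. Take $t_0=\lceil (n/\lambda)\log M\rceil$, noting $M\ge1$. Bound the first $t_0$ terms by $1$. The remaining terms form a geometric series:
\[
 \sum_{t\ge t_0}Me^{-\lambda t/n}\le\frac{1}{1-e^{-\lambda/n}} .
\]
Since $0<\lambda/n\le1$ by Proposition~\ref{prop:gap-properties}, and $1-e^{-x}\ge x(1-e^{-1})\ge x/2$ on $(0,1]$, this tail is at most $2n/\lambda$. Together with the ceiling contribution $1\le n/\lambda$, this gives
\[
 \E\tau\le (n/\lambda)\bigl[\log M+3\bigr],
\]
which is \eqref{eq:variational-upper}.

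The only delicate point is the bookkeeping of the additive constant. One must check that the ceiling and the geometric tail together contribute at most $3n/\lambda$, and it is exactly $\lambda\le n$ from Proposition~\ref{prop:gap-properties} that makes $1-e^{-\lambda/n}$ comparable to $\lambda/n$. There is no genuine obstacle: the definition of $\lambda_p$ was designed so that the supermartingale-type contraction holds exactly, including at profiles with coinciding neighbor values.
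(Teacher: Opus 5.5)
Your proposal is correct and follows the paper's proof essentially line for line. The steps match: the exact one-step contraction from the definition of $\lambda_p(G)$, then Markov's inequality through the energy--oscillation bound, then splitting $\sum_t\Pr(\tau>t)$ at $\lceil (n/\lambda)\log M\rceil$. The only cosmetic difference is that you bound the geometric tail by $(1-e^{-x})^{-1}\le 2/x$ on $(0,1]$, whereas the paper uses $(1-e^{-x})^{-1}\le 1+x^{-1}$; either way the additive constant comes out as $3$.
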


\begin{proof}
Conditioning on $f_t$ and using \eqref{eq:relaxation-gap},
\[
 \E[\cE_p(f_{t+1})\mid f_t]
 =\cE_p(f_t)-\frac1n\sum_{u\in V}\cD_p(f_t,u)
 \le\left(1-\frac{\lambda_p(G)}n\right)\cE_p(f_t).
\]
Consequently,
\[
 \E\cE_p(f_t)
 \le e^{-\lambda_p(G)t/n}\cE_p(f_0)
 \le 2^p|E|e^{-\lambda_p(G)t/n}.
\]
By Lemma~\ref{lem:oscillation-energy} and Markov's inequality,
\begin{equation}\label{eq:tail-upper}
 \Pr(\tau_p(\epsilon;f_0)>t)
 \le A e^{-xt},
 \qquad
 A=\frac{2^p|E|D^{p-1}}{\epsilon^p},
 \quad x=\frac{\lambda_p(G)}n.
\end{equation}
Here $0<x\le1$ by Proposition~\ref{prop:gap-properties}, and $A>1$ under
the stated hypotheses.  Split the tail sum
$\E\tau=\sum_{t\ge0}\Pr(\tau>t)$ at
$T=\lceil x^{-1}\log A\rceil$.  The part below $T$ is at most $T$, while
the geometric tail is at most
$(1-e^{-x})^{-1}\le1+x^{-1}$.  This gives
\eqref{eq:variational-upper}.
\end{proof}

\subsection{Exact drops and a nonlinear Poincar\'e comparison}

For a profile $f$ and a vertex $u$, put
\begin{equation}\label{eq:local-residual}
 B_f(u)=\sum_{v\sim u}|f(u)-f(v)|^{p-2}(f(u)-f(v)).
\end{equation}
The summand is zero when the difference is zero.  Also set
\[
 A_f(u)=\sum_{v\sim u}|f(u)-f(v)|^{p-2}.
\]
When $1<p<2$, this is an extended-real quantity: a zero difference makes
$A_f(u)=\infty$.  We use the extended-real conventions
$0^{p-2}=+\infty$ and $x/(+\infty)=0$ for every finite $x$; in particular,
$B_f(u)^2/A_f(u)$ and $|B_f(u)|/A_f(u)$ are then zero.  When
$p=2$, every summand in $A_f(u)$ is interpreted as one, so $A_f(u)=\deg(u)$.
When $p>2$, zero differences contribute zero, and the ratio is defined to be
zero if $A_f(u)=B_f(u)=0$.

One-site energy-decrease estimates in terms of such local residuals, including
the characteristic residual/degree scale, were developed in
\cite[Lemma~4.1 and Claim~4.2]{amir2025convergencerateellpenergyminimization}.
The following zero-safe formulation records the two complementary regimes
needed to compare the normalized global gap $\lambda_p(G)$ with
$\Phi_p(G)$.

\begin{lem}[Local energy-drop comparison]\label{lem:local-drop-comparison}
Let $q=p/(p-1)$.  There is $c_p>0$ such that, for every profile $f$ and
vertex $u$ of degree $d$,
\begin{equation}\label{eq:local-drop-comparison}
 \cD_p(f,u)\ge c_p
 \begin{cases}
 \displaystyle\max\left\{
  \frac{B_f(u)^2}{A_f(u)},
  \frac{|B_f(u)|^q}{d^{1/(p-1)}}\right\},&1<p<2,\\[3mm]
 \displaystyle\min\left\{
  \frac{B_f(u)^2}{A_f(u)},
  \frac{|B_f(u)|^q}{d^{1/(p-1)}}\right\},&p\ge2.
 \end{cases}
\end{equation}
All expressions are understood by the conventions above.  In particular,
the estimate remains valid when some incident edge differences vanish.
\end{lem}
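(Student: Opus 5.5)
We reduce everything to a one-dimensional convex problem.  Fix $u$ with neighbors $v_1,\dots,v_d$, write $a_i=f(v_i)$, $x_0=f(u)$, and
\[
 F(x)=\sum_{i=1}^d|x-a_i|^p,\qquad F'(x)=p\sum_i\varphi_p(x-a_i),
\]
so that $F'(x_0)=pB_f(u)$ and $\cD_p(f,u)=F(x_0)-\min F$.  By symmetry ($f\mapsto-f$) we may assume $B:=B_f(u)>0$; if $B=0$, then $x_0$ is the minimizer and both right-hand sides vanish under the stated conventions.  The basic tool is that for any $s\ge0$,
\[
 \cD_p(f,u)\ge F(x_0)-F(x_0-s)=\int_0^s F'(x_0-r)\,dr .
\]
So it suffices to give lower bounds on $F'$ along a short interval to the left of $x_0$.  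We choose $s$ small enough that $F'$ stays at least $pB/2$ on $[x_0-s,x_0]$, which gives $\cD_p\ge pBs/2$.  Write $\delta_i=x_0-a_i$.

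\textbf{Step 1: the $|B|^q/d^{1/(p-1)}$ term.}  For $1<p<2$, the map $\varphi_p$ is $(p-1)$-Hölder with constant $C_p$: $|\varphi_p(t)-\varphi_p(t-r)|\le C_p r^{p-1}$.  Hence
\[
 F'(x_0-r)\ge pB-pC_pd\,r^{p-1}.
\]
Taking $s=(B/(2C_pd))^{1/(p-1)}$ gives $\cD_p\gtrsim_p B\cdot (B/d)^{1/(p-1)}=B^q/d^{1/(p-1)}$.  This bound is unconditional for $p<2$, which is why it enters the maximum.

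\textbf{Step 2: the $B^2/A$ term for $1<p<2$.}  Here $\varphi_p'(t)=(p-1)|t|^{p-2}$ is decreasing in $|t|$.  Restrict to $r\le s\le \tfrac12\min_i|\delta_i|$; when some $\delta_i=0$ we have $A=\infty$ and nothing needs proving.  On this range $|\delta_i-r'|\ge|\delta_i|/2$, so
\[
 F''\le p(p-1)2^{2-p}A,\qquad F'(x_0-r)\ge pB-C_pAr .
\]
Choose $s=\min\{B/(2C_pA),\,\tfrac12\min_i|\delta_i|\}$.  If the first option is active, then $\cD_p\gtrsim B^2/A$.  If the second is active, say $s=|\delta_j|/2$, observe that $A\ge|\delta_j|^{p-2}$.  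Moreover $B\le\sum_i|\delta_i|^{p-1}$, and Step 1 already yields $\cD_p\gtrsim B^q/d^{1/(p-1)}$.  To close this case I would compare $B^2/A$ with $B^q d^{-1/(p-1)}$: I expect $B^2/A\lesssim B\,|\delta_j|$, and that $B|\delta_j|$ is controlled by $F(x_0)-F(x_0-|\delta_j|/2)$ directly, by convexity of each summand.  This case analysis is the main obstacle.  A cleaner route I would try first is to use the exact one-dimensional inequality
\[
 F(x_0)-F(x_0-s)\ge sF'(x_0)-\tfrac{s^2}{2}\sup F''
\]
only over those $i$ with $|\delta_i|\ge 2s$, and to handle the small-$|\delta_i|$ terms, whose contribution to $A$ dominates, separately through the Hölder estimate of Step 1.

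\textbf{Step 3: $p\ge2$.}  Now $\varphi_p'(t)=(p-1)|t|^{p-2}$ is increasing.  For $r\le s$,
\[
 F''(x_0-r)\le C_p\sum_i(|\delta_i|+s)^{p-2}\le C_p\bigl(A+d\,s^{p-2}\bigr).
\]
Hence $F'(x_0-r)\ge pB-C_p(A+ds^{p-2})r$.  Choose $s$ as the smaller of $B/(4C_pA)$ and the root of $C_pds^{p-1}=pB/4$.  In the first case $\cD_p\gtrsim B^2/A$; in the second, $\cD_p\gtrsim B(B/d)^{1/(p-1)}=B^q/d^{1/(p-1)}$.  This gives the minimum, with the convention $A=B=0$ covered by the $B=0$ case above.
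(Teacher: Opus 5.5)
\textbf{There is a genuine gap in Step 2.} Your Steps 1 and 3 are correct. Bounding $F'$ from below on $[x_0-s,x_0]$ is a valid alternative to the paper's route: the paper first proves $\cD_p(f,u)\asymp_p|gB|$ for the actual displacement $g$ via a midpoint inequality, then bounds $|g|$ from below through the first-order equation.

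Step 2 is not just an unfinished case analysis. The truncation $s\le\frac12\min_i|\delta_i|$ loses the $B^2/A$ scale whenever one neighbor value lies very close to $f(u)$, because the true minimizer moves past that neighbor. For instance, take $k+1$ increments $\delta_i=1$, $k$ increments $\delta_i=-1$, and one $\delta_i=\epsilon$ with $k^{-1/(2-p)}\ll\epsilon\ll k^{-1}$. Then $B=1+\epsilon^{p-1}\asymp1$ and $A=2k+1+\epsilon^{p-2}\asymp k$. So the lemma requires $\cD_p(f,u)\gtrsim_p1/k$, and the true drop is indeed of this order, since the displacement is $g\approx 1/(2(p-1)k)\gg\epsilon$. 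But Step 1 gives only order $k^{-1/(p-1)}\ll1/k$, and your truncated Step 2 gives only order $B\epsilon\ll1/k$. Your expectation $B^2/A\lesssim B|\delta_j|$ is therefore false, and the two bounds you actually establish cannot be combined to recover the $B^2/A$ term.

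\textbf{The missing ingredient.} For $1<p<2$ the linear-rate increment bound holds with no smallness condition:
\[
 |\varphi_p(t)-\varphi_p(t-r)|\le C_p\,r\,|t|^{p-2}\qquad(r\ge0,\ t\in\R),
\]
with $0^{p-2}=+\infty$. For $r\le|t|/2$ this is the mean value theorem. For $r>|t|/2$, use H\"older continuity together with $r^{p-1}=r\cdot r^{p-2}\le2^{2-p}r|t|^{p-2}$. Summing over neighbors gives $F'(x_0-r)\ge pB-pC_prA$ for every $r\ge0$. The choice $s=B/(2C_pA)$ then gives $\cD_p(f,u)\gtrsim_pB^2/A$ at once, with no truncation; this is exactly the paper's bound \eqref{eq:phi-subquadratic-increment}.

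Your ``cleaner route'' points in this direction. It only works, however, if the H\"older contribution of each index with $|\delta_i|<2s$ is charged to $A$ through $s^{p-1}\le2^{2-p}s|\delta_i|^{p-2}$, rather than to the degree $d$ as in Step 1.
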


\begin{proof}
Write the incident differences as $a_i=f(u)-f(v_i)$, and write
$g=f(u)-f^{\#u}(u)$.  Thus
\begin{equation}\label{eq:g-equation}
 \sum_{i=1}^d\varphi_p(a_i-g)=0,
 \qquad B:=B_f(u)=\sum_{i=1}^d\varphi_p(a_i).
\end{equation}
The signs of $B$ and $g$ agree.  We first prove explicitly the drop estimate
used below.  Define
\begin{equation}\label{eq:midpoint-constant}
 c_p^\ast=\inf_{s\in\R}
 \frac{\varphi_p(s-1/2)-\varphi_p(s-1)}
      {\varphi_p(s)-\varphi_p(s-1)}>0.
\end{equation}
Indeed, the quotient is positive and continuous because $\varphi_p$ is
continuous and strictly increasing, and it tends to $1/2$ as
$s\to\pm\infty$.  Its infimum is therefore positive.  Homogeneity gives,
for every $x\in\R$ and $h>0$, the scalar inequality
\begin{equation}\label{eq:midpoint-increment}
 \varphi_p(x-h/2)-\varphi_p(x-h)
 \ge c_p^\ast[\varphi_p(x)-\varphi_p(x-h)].
\end{equation}
This includes the cases in which any of $x,x-h/2,x-h$ is zero, because
$\varphi_p(0)=0$ and no negative power is used.

If $g=0$, then \eqref{eq:g-equation} gives $B=0$ and the energy drop is zero.
Otherwise, change all signs if necessary and assume $g>0$, so $B>0$.  Put
$H(t)=\sum_i\varphi_p(a_i-t)$.  Then $H$ is decreasing,
$H(0)=B$, and $H(g)=0$.  Summing \eqref{eq:midpoint-increment} with
$x=a_i$ and $h=g$ gives
\[
 H(g/2)=H(g/2)-H(g)\ge c_p^\ast[H(0)-H(g)]=c_p^\ast B.
\]
For the local objective $F(t)=\sum_i|a_i-t|^p$,
$F'(t)=-pH(t)$, and hence
\begin{equation}\label{eq:drop-integral-bounds}
 \frac{pc_p^\ast}{2}gB
 \le p\int_0^{g/2}H(t)\,dt
 \le F(0)-F(g)=p\int_0^gH(t)\,dt
 \le pgB.
\end{equation}
The same conclusion follows for $g<0$ after changing signs.  We have thus
proved, including all zero cases,
\begin{equation}\label{eq:drop-gB}
 \cD_p(f,u)\asymp_p |gB|.
\end{equation}

We shall also use the standard one-dimensional monotonicity inequality
\begin{equation}\label{eq:phi-monotonicity}
 (\varphi_p(x)-\varphi_p(y))(x-y)
 \asymp_p |x-y|^2(|x|+|y|)^{p-2}
\end{equation}
(with its continuous interpretation at the origin).  For completeness, this
follows directly from homogeneity and compactness.  Since
$\varphi_p'(s)=(p-1)|s|^{p-2}$ almost everywhere,
\[
 (\varphi_p(x)-\varphi_p(y))(x-y)
 =(p-1)|x-y|^2\int_0^1|y+t(x-y)|^{p-2}\,dt.
\]
If $S=|x|+|y|>0$, divide the two endpoints by $S$.  On the compact set
$\{(a,b):|a|+|b|=1\}$, the integral
$\int_0^1|b+t(a-b)|^{p-2}\,dt$ is finite, continuous, and bounded above and
below by positive constants depending only on $p$; here $p-2>-1$ makes a
possible crossing of zero integrable.  Rescaling by $S$ proves
\eqref{eq:phi-monotonicity}; the remaining case follows by continuity.

Suppose first that $p\ge2$.  At $p=2$, equation \eqref{eq:g-equation} gives
$B=dg$, so \eqref{eq:B-g-pge2} below holds with $A_f(u)=d$.  If $p>2$,
every difference
$\varphi_p(a_i)-\varphi_p(a_i-g)$ has the sign of $g$, summing the absolute
form of \eqref{eq:phi-monotonicity} gives
\[
 |B|\asymp_p |g|\sum_i(|a_i|+|a_i-g|)^{p-2}.
\]
The elementary triangle inequalities
\begin{equation}\label{eq:ag-comparison}
 \frac12(|a|+|g|)\le |a|+|a-g|\le2(|a|+|g|)
\end{equation}
and $(s+t)^{p-2}\asymp_p s^{p-2}+t^{p-2}$ now yield
\begin{equation}\label{eq:B-g-pge2}
 |B|\asymp_p |g|\bigl(A_f(u)+d|g|^{p-2}\bigr).
\end{equation}
The case $g=0$ was settled above, so the following ratio calculations concern
$g\ne0$.  To identify explicitly the minimum in
\eqref{eq:local-drop-comparison}, put
\[
 R=\frac{B^2}{A_f(u)},\qquad
 Q=\frac{|B|^q}{d^{1/(p-1)}}.
\]
When $p>2$ and $A_f(u)\ge d|g|^{p-2}$, equation
\eqref{eq:B-g-pge2} gives
\[
 \cD_p(f,u)\asymp_p |g|^2A_f(u)\asymp_p R,
 \qquad
 \frac QR\asymp_p
 \left(\frac{A_f(u)}{d|g|^{p-2}}\right)^{1/(p-1)}.
\]
Thus $R\lesssim_p Q$, so the minimum is $R$ up to a $p$-dependent
factor.  In the complementary regime $A_f(u)<d|g|^{p-2}$,
\[
 \cD_p(f,u)\asymp_p d|g|^p\asymp_p Q,
 \qquad
 \frac RQ\asymp_p\frac{d|g|^{p-2}}{A_f(u)}.
\]
Here $Q\lesssim_p R$, so the minimum is $Q$ up to a $p$-dependent factor.
At $p=2$, one has $A_f(u)=d$, $B=dg$, and $R=Q=d g^2$ exactly.
This proves the second line of \eqref{eq:local-drop-comparison}, including
two-sided comparability with the displayed minimum.

Now let $1<p<2$ and assume $g\ne0$, since the case $g=0$ was settled above.
The H\"older continuity of $\varphi_p$ and the first-order
estimate away from its singular point combine into the zero-safe scalar
bound
\begin{equation}\label{eq:phi-subquadratic-increment}
 |\varphi_p(a)-\varphi_p(a-g)|
 \le C_p\min\bigl\{|g|^{p-1},\ |g||a|^{p-2}\bigr\}.
\end{equation}
Here $0^{p-2}=+\infty$, so if $a=0$ the finite first term is selected.
For example, \eqref{eq:phi-subquadratic-increment} follows by considering
$|g|\le |a|/2$ and its complement, and using H\"older continuity in the
latter case.  Since the increments in \eqref{eq:g-equation} all have the
sign of $g$, summing each of the two bounds in
\eqref{eq:phi-subquadratic-increment} gives, separately,
\[
 |B|\le C_p d|g|^{p-1},
 \qquad |B|\le C_p|g|A_f(u).
\]
The second statement is automatic when $A_f(u)=\infty$.  Consequently,
with the convention finite divided by infinity equals zero,
\[
 |g|\ge c_p(|B|/d)^{1/(p-1)},
 \qquad |g|\ge c_p|B|/A_f(u).
\]
Multiplication by $|B|$ and \eqref{eq:drop-gB} prove the first line.
Equivalently, one may replace $|s|^p$ by
$(s^2+\delta^2)^{p/2}$ throughout and let $\delta\downarrow0$; this also
verifies the extended-real conventions at zero differences.
\end{proof}

Define the degree-weighted scalar Poincar\'e constant by
\begin{equation}\label{eq:Phi-p}
 \Phi_p(G)=\sup_{f\text{ nonconstant}}
 \frac{\displaystyle\inf_{c\in\R}
       \sum_{u\in V}\deg(u)|f(u)-c|^p}
      {\cE_p(f)}.
\end{equation}

\begin{thm}[Nonlinear Poincar\'e comparison]
\label{thm:nonlinear-poincare-comparison}
For every finite connected graph and every $1<p<\infty$,
\begin{equation}\label{eq:gap-Phi-comparison}
 \lambda_p(G)\ge c_p
 \begin{cases}
  \Phi_p(G)^{-1/(p-1)},&1<p<2,\\
  \Phi_p(G)^{-2/p},&p\ge2.
 \end{cases}
\end{equation}
\end{thm}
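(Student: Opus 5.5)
The plan is to bound $\sum_u\cD_p(f,u)$ from below by Lemma~\ref{lem:local-drop-comparison}. The whole argument rests on a discrete integration-by-parts identity that ties the local residuals $B_f(u)$ to the energy. For each edge $\{u,v\}$, the contributions $\varphi_p(f(u)-f(v))f(u)+\varphi_p(f(v)-f(u))f(v)$ add up to $|f(u)-f(v)|^p$. The same pairing gives $\sum_uB_f(u)=0$. Hence, for every $c\in\R$,
\[
 \cE_p(f)=\sum_{u\in V}B_f(u)\,(f(u)-c).
\]
Fix $c$ attaining the infimum in \eqref{eq:Phi-p}, so that $\sum_u\deg(u)|f(u)-c|^p\le\Phi_p(G)\cE_p(f)$. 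The proof then pairs $B_f$ against $f-c$ by a Hölder inequality matched to each of the two quantities $Q=|B|^q/d^{1/(p-1)}$ and $R=B^2/A$ in \eqref{eq:local-drop-comparison}.

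\emph{The $Q$-estimate.} Note that $d^{1/(p-1)}=d^{q-1}$ and write $|B|=|B|d^{-1/p}\cdot d^{1/p}$. Applying Hölder with exponents $(q,p)$ to any subset $W\subseteq V$ gives
\[
 \sum_{u\in W}B_f(u)(f(u)-c)\le\Bigl(\sum_{u\in W}\frac{|B_f(u)|^q}{\deg(u)^{1/(p-1)}}\Bigr)^{1/q}\bigl(\Phi_p\cE_p\bigr)^{1/p}.
\]
With $W=V$ and the identity, this yields $\sum_uQ(u)\ge\cE_p^{\,q-q/p}\Phi_p^{-q/p}=\cE_p\,\Phi_p^{-1/(p-1)}$. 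For $1<p<2$ the lemma bounds $\cD_p$ below by the maximum, which is at least $Q$. So $\sum_u\cD_p(f,u)\ge c_p\Phi_p^{-1/(p-1)}\cE_p(f)$, and this settles the case $1<p<2$.

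\emph{The $R$-estimate for $p\ge2$.} Cauchy--Schwarz on $W$ gives
\[
 \sum_W B(f-c)\le\Bigl(\sum_W B^2/A\Bigr)^{1/2}\Bigl(\sum_W A|f-c|^2\Bigr)^{1/2}.
\]
Expanding, $\sum_uA(u)|f(u)-c|^2=\sum_u\sum_{v\sim u}|\nabla f|^{p-2}|f(u)-c|^2$. Hölder with exponents $(p/(p-2),p/2)$ bounds this by $(2\cE_p)^{1-2/p}(\Phi_p\cE_p)^{2/p}$. Vertices with $A=B=0$ contribute zero on both sides, consistent with the conventions. At $p=2$ we have $A=\deg$ and this step is trivial.

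\emph{Combining the two for $p\ge2$.} Here the lemma only gives the minimum, which I expect to be the main obstacle. Split $V=V_R\cup V_Q$, where $V_R=\{R\le Q\}$ and $V_Q$ is its complement. One of the two partial sums $\sum_{V_R}B(f-c)$ and $\sum_{V_Q}B(f-c)$ is at least $\cE_p/2$. Apply the matching restricted inequality on that set:
\begin{itemize}
\item if it is $V_R$, then $\sum_{V_R}R\ge c_p\cE_p\Phi_p^{-2/p}$;
\item if it is $V_Q$, then $\sum_{V_Q}Q\ge c_p\cE_p\Phi_p^{-1/(p-1)}$.
\end{itemize}
Either way, $\sum_u\cD_p\ge c_p\cE_p\min\{\Phi_p^{-2/p},\Phi_p^{-1/(p-1)}\}$.

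It remains to show that this minimum is $\gtrsim_p\Phi_p^{-2/p}$. Since $1/(p-1)\le2/p$ for $p\ge2$, it suffices to prove a lower bound $\Phi_p(G)\ge2^{-p}$. Test $f=\one_{\{u\}}$ with $u$ of minimal degree. Then $\cE_p(f)=\deg(u)$. Because $n\ge2$, $\operatorname{vol}(V\setminus\{u\})\ge\deg(u)$. Now consider the two cases for $c$:
\begin{itemize}
\item if $c\le1/2$, the term $\deg(u)|1-c|^p$ is at least $2^{-p}\deg(u)$;
\item if $c>1/2$, the terms over $V\setminus\{u\}$ are at least $2^{-p}\deg(u)$.
\end{itemize}
This gives $\Phi_p\ge2^{-p}$, hence $\Phi_p^{-1/(p-1)}\ge c_p\Phi_p^{-2/p}$. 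Dividing by $\cE_p(f)$ and taking the infimum in \eqref{eq:relaxation-gap} proves \eqref{eq:gap-Phi-comparison}.
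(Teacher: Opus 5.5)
Your proposal is correct and follows the paper's proof essentially step by step: the identity $\sum_u B_f(u)(f(u)-c)=\cE_p(f)$, weighted H\"older for the $Q$-terms, Cauchy--Schwarz plus the directed-edge H\"older bound for the $R$-terms, and the split of $V$ into $\{R\le Q\}$ and its complement. The only cosmetic difference is the lower bound on $\Phi_p$: you get $\Phi_p(G)\ge 2^{-p}$ from the test function $\one_{\{u\}}$, while the paper gets $\Phi_p(G)\ge 2^{1-p}$ from $\cE_p(f)\le 2^{p-1}\sum_u\deg(u)|f(u)-c|^p$; either suffices.
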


\begin{proof}
Translate $f$ so that a minimizer in the numerator of
\eqref{eq:Phi-p} is zero, and put
\[
 V_p(f)=\sum_u\deg(u)|f(u)|^p
 \le\Phi_p(G)\cE_p(f).
\]
Pairing the two orientations of every edge gives the exact identity
\begin{equation}\label{eq:fB-energy}
 \sum_uf(u)B_f(u)=\cE_p(f).
\end{equation}

If $1<p<2$, weighted H\"older's inequality in \eqref{eq:fB-energy} gives
\[
 \cE_p(f)
 \le\left(\sum_u
 \frac{|B_f(u)|^q}{\deg(u)^{1/(p-1)}}\right)^{1/q}
 V_p(f)^{1/p}.
\]
Lemma~\ref{lem:local-drop-comparison} therefore implies
\[
 \sum_u\cD_p(f,u)
 \ge c_p\Phi_p(G)^{-1/(p-1)}\cE_p(f).
\]

Suppose $p\ge2$.  Write
\[
 R_u=\frac{B_f(u)^2}{A_f(u)},\qquad
 Q_u=\frac{|B_f(u)|^q}{\deg(u)^{1/(p-1)}},
\]
and partition $V=V_1\sqcup V_2$ so that $R_u\le Q_u$ on $V_1$ and
$Q_u<R_u$ on $V_2$.  The signed partial sums
\[
 X=\sum_{u\in V_1}f(u)B_f(u),\qquad
 Y=\sum_{u\in V_2}f(u)B_f(u)
\]
satisfy $X+Y=\cE_p(f)$.  Hence either $X\ge\cE_p(f)/2$ or
$Y\ge\cE_p(f)/2$.

In the first case, Cauchy--Schwarz gives
\begin{equation}\label{eq:R-case}
 \sum_{u\in V_1}R_u
 \ge \frac{X^2}{\sum_uA_f(u)f(u)^2}.
\end{equation}
At $p=2$, the convention $A_f(u)=\deg(u)$ gives directly
\[
 \sum_uA_f(u)f(u)^2
 =\sum_u\deg(u)f(u)^2
 =V_2(f)
 \le \Phi_2(G)\cE_2(f).
\]
For $p>2$, H\"older's inequality, applied to the directed-edge sum in the
denominator, gives
\begin{align*}
 \sum_uA_f(u)f(u)^2
 &\le (2\cE_p(f))^{(p-2)/p}V_p(f)^{2/p}\\
 &\le C_p\Phi_p(G)^{2/p}\cE_p(f).
\end{align*}
Thus \eqref{eq:R-case} is at least
$c_p\Phi_p(G)^{-2/p}\cE_p(f)$.

In the second case, weighted H\"older's inequality, now restricted to $V_2$, gives
\[
 \sum_{u\in V_2}Q_u
 \ge c_p\Phi_p(G)^{-1/(p-1)}\cE_p(f).
\]
Finally, for any $c\in\R$,
\[
 \cE_p(f)\le2^{p-1}\sum_u\deg(u)|f(u)-c|^p,
\]
so $\Phi_p(G)\ge2^{1-p}$.  Since
$1/(p-1)\le2/p$, the last lower bound is, up to a $p$-dependent constant,
at least the required $\Phi_p(G)^{-2/p}$ bound.  Lemma
\ref{lem:local-drop-comparison} controls the relevant minimum in both cases.
Dividing by $\cE_p(f)$ and taking the infimum proves the theorem.
\end{proof}

For $1<p<\infty$ and $0<\varepsilon\le1$, write
\[
  \mathsf T_p(G,\varepsilon)
  :=\sup_{f_0\in[-1,1]^V}\mathbb E\tau_p(\varepsilon;f_0).
\]

\begin{prop}[Extrapolation from the quadratic relaxation gap]
\label{prop:comparison-with-p2}
For every finite connected graph and every $2\le p<\infty$,
\[
  \lambda_p(G)\ge c_p\lambda_2(G).
\]
Moreover, for $0<\varepsilon\le1$,
\begin{equation}
  \mathsf T_p(G,\varepsilon)
  \le C_p\frac{n}{\lambda_2(G)}
       \log\!\left(\frac{en}{\varepsilon}\right).
  \label{eq:Tp-vs-gap2}
\end{equation}
\end{prop}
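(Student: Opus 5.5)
The plan is to route through Theorem~\ref{thm:nonlinear-poincare-comparison}. For $p\ge2$ it gives $\lambda_p(G)\ge c_p\Phi_p(G)^{-2/p}$. So the first claim reduces to two comparisons:
\[
 \Phi_p(G)\le C_p\Phi_2(G)^{p/2}
 \qquad\text{and}\qquad
 \Phi_2(G)^{-1}\ge\lambda_2(G).
\]
The time bound \eqref{eq:Tp-vs-gap2} then follows from Theorem~\ref{thm:variational-upper}.

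\emph{Step 1: the quadratic case.} For $p=2$ the update is averaging, so $\cD_2(f,u)=B_f(u)^2/\deg(u)$ exactly, by the computation $B=dg$ in Lemma~\ref{lem:local-drop-comparison}. Moreover $\Phi_2(G)=\mu_1(G)^{-1}$: the infimum over $c$ is attained at the degree-weighted mean, and the ratio is the Rayleigh quotient of the normalized Laplacian. Take $f$ a first nontrivial eigenfunction, so $Lf=\mu_1 Df$ and $B_f=\mu_1\deg\cdot f$. Then
\[
 \sum_u\frac{B_f(u)^2}{\deg(u)}=\mu_1^2\sum_u\deg(u)f(u)^2=\mu_1\cE_2(f),
\]
which yields $\lambda_2(G)\le\mu_1=\Phi_2(G)^{-1}$.

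\emph{Step 2: extrapolation $\Phi_p\le C_p\Phi_2^{p/2}$.} Given nonconstant $f$, let $m$ be a median of $f$ for the measure $\deg$. Set $f_+=(f-m)_+$ and $g=f_+^{p/2}$. Then $g$ vanishes on a set of at least half the volume. The key point is a quadratic Poincaré inequality for such functions,
\[
 \sum_u\deg(u)g(u)^2\le C\,\Phi_2(G)\,\cE_2(g),
\]
with an absolute constant $C$. To get it, compare $\sum\deg\, g^2$ with $\inf_c\sum\deg\,|g-c|^2$, using that $g=0$ on half the volume so the optimal $c$ cannot carry most of the mass. The elementary bound
\[
 |a^{p/2}-b^{p/2}|\le\tfrac p2|a-b|\bigl(a^{p/2-1}+b^{p/2-1}\bigr)
\]
and H\"older over directed edges give
\[
 \cE_2(g)\le C_p\,\cE_p(f)^{2/p}\Bigl(\sum_u\deg(u)f_+(u)^p\Bigr)^{(p-2)/p}.
\]
Combining the two displays and dividing yields
\[
 \sum_u\deg(u)f_+(u)^p\le C_p\Phi_2^{p/2}\cE_p(f).
\]
The same holds for $(f-m)_-$. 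Adding the two and taking $c=m$ in \eqref{eq:Phi-p} proves the extrapolation. Hence
\[
 \lambda_p\ge c_p\Phi_p^{-2/p}\ge c_p\Phi_2^{-1}\ge c_p\lambda_2 .
\]

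\emph{Step 3: the time bound.} Insert $\lambda_p\ge c_p\lambda_2$ into \eqref{eq:variational-upper}. Since $|E|\le n^2$ and $D\le n$, the logarithm is at most $C_p\log(en/\varepsilon)$, and the additive $3$ is absorbed because $\log(en/\varepsilon)\ge1$.

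The main obstacle is the median form of the quadratic Poincaré inequality in Step~2, specifically getting an absolute constant. I would handle it by writing $\sum\deg\,g^2\le2\sum\deg\,|g-c|^2+2c^2\operatorname{vol}(V)$ for the optimal $c$. Then I would bound $c^2\operatorname{vol}(V)/2\le\sum_{\{g=0\}}\deg\,|g-c|^2$, using that $\{g=0\}$ has at least half the volume.
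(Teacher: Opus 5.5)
Your proof is correct, but it takes a genuinely different route at the key step.

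The paper gets $\Phi_p(G)^{1/p}\le Cp\,\Phi_2(G)^{1/2}$ by citing Matou\v{s}ek's extrapolation lemma in the reversible-chain form of Naor--Silberman. Using it requires translating between the pairwise Poincar\'e modulus $\mathfrak P_r(P)$ and the degree-weighted $\Phi_r(G)$, and the paper then invokes the cited identity $\lambda_2(G)=\mu_1(G)$.

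You instead prove the extrapolation directly in the degree-weighted normalization. You use median truncation, the substitution $g=((f-m)_+)^{p/2}$, a median-to-mean comparison for the quadratic Poincar\'e inequality (absolute constant $6$), and H\"older over edges. This is in effect a self-contained proof of the cited lemma. It closely parallels the median-truncation/H\"older argument the paper already uses in Proposition~\ref{prop:conductance-poincare}, so it fits naturally with the rest of the paper. Tracking constants, your H\"older step costs only a factor of order $p^2$, so you also retain $\Phi_p^{1/p}\le Cp\,\Phi_2^{1/2}$ with $C$ absolute.

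Your Step~1 proves only $\lambda_2\le\mu_1=\Phi_2^{-1}$, by testing a first eigenfunction. That is exactly the direction needed. Your chain in fact yields $\lambda_p(G)\ge c_p\mu_1(G)$ directly, which is the form combined with the spectral lower bound in Corollary~\ref{cor:universal-p-comparison}.

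The paper's version is shorter by citation. Yours avoids the external lemma and the normalization bookkeeping.

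Two trivial cases deserve a sentence:
\begin{itemize}
\item At $p=2$ the H\"older exponent $p/(p-2)$ is infinite. But then $g=f_+$ and the bound $\cE_2(g)\le\cE_2(f)$ is immediate.
\item If $\sum_u\deg(u)f_+(u)^p=0$, there is nothing to divide.
\end{itemize}
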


\begin{proof}
Matou\v{s}ek's extrapolation inequality~\cite{Matousek1997} has a
reversible Markov-chain formulation; see Naor and Silberman
\cite[Lemma~4.4]{NaorSilberman2011}.  For a reversible transition matrix
$P$ with stationary measure $\pi$, define its pairwise Poincar\'e modulus by
\[
 \mathfrak P_r(P)=
 \sup_{f\text{ nonconstant}}
 \left(
 \frac{\displaystyle\sum_{u,v}\pi(u)\pi(v)|f(u)-f(v)|^r}
      {\displaystyle\sum_{u,v}\pi(u)P(u,v)|f(u)-f(v)|^r}
 \right)^{1/r}.
\]
To translate this normalization to \eqref{eq:Phi-p}, put
\[
  \pi(u)=\frac{\deg(u)}{2|E|},
  \qquad
  P(u,v)=\frac{\mathbf{1}_{\{u\sim v\}}}{\deg(u)}.
\]
This is a reversible Markov chain.  For every $1<r<\infty$, write
\[
 A_r=\sum_{u,v}\pi(u)P(u,v)|f(u)-f(v)|^r,
 \quad
 M_r=\inf_{c\in\R}\sum_u\pi(u)|f(u)-c|^r,
\]
and
\[
 B_r=\sum_{u,v}\pi(u)\pi(v)|f(u)-f(v)|^r.
\]
Then
\begin{align*}
 A_r
 &=\frac{\cE_r(f)}{|E|},\\
 M_r
 &=\frac{1}{2|E|}\inf_{c\in\R}
     \sum_u\deg(u)|f(u)-c|^r.
\end{align*}
Choosing $c=f(v)$ and averaging in $v$ proves $M_r\le B_r$; the triangle
inequality about a minimizer of $M_r$ gives $B_r\le2^rM_r$.  Consequently,
since $2M_r/A_r$ is the quotient in \eqref{eq:Phi-p}, taking suprema over
nonconstant $f$ yields
\[
 \frac{\Phi_r(G)}2\le \mathfrak P_r(P)^r
 \le 2^{r-1}\Phi_r(G).
\]
The reversible-chain extrapolation theorem states that
\[
 \mathfrak P_p(P)\le Cp\,\mathfrak P_2(P),
 \qquad 2\le p<\infty,
\]
where $C$ is universal.  Combining it with the preceding two-sided
normalization comparison gives
\begin{equation}
  \Phi_p(G)^{1/p}
  \le C p\,\Phi_2(G)^{1/2},
  \label{eq:poincare-extrapolation}
\end{equation}
where $C$ is universal.  Since
$\Phi_2(G)^{-1}=\lambda_2(G)$,
Theorem~\ref{thm:nonlinear-poincare-comparison} and
\eqref{eq:poincare-extrapolation} imply
\[
  \lambda_p(G)\ge c_p\lambda_2(G),
  \qquad 2\le p<\infty.
\]
Consequently, Theorem~\ref{thm:variational-upper} and the elementary bounds
$|E|\le n^2/2$ and $D\le n-1$ yield
\eqref{eq:Tp-vs-gap2}.
\end{proof}

\begin{cor}[Universal comparison with quadratic consensus]
\label{cor:universal-p-comparison}
For every finite connected graph, every $2\le p<\infty$, and every
$0<\varepsilon\le1/2$,
\begin{equation}
  \mathsf T_p(G,\varepsilon)
  \le C_p\log\!\left(\frac{en}{\varepsilon}\right)
          \mathsf T_2(G,\varepsilon).
  \label{eq:Tp-vs-T2}
\end{equation}
Thus the worst-profile $p$-consensus time on every graph is at most the
quadratic worst-profile time times one logarithm and a $p$-dependent
constant.
\end{cor}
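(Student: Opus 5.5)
The plan is to combine the upper bound \eqref{eq:Tp-vs-gap2} of Proposition~\ref{prop:comparison-with-p2} with a lower bound on $\mathsf T_2(G,\varepsilon)$ of order $n/\lambda_2(G)$. At $p=2$ the dynamics is linear, so the lower bound can be obtained from an explicit eigenfunction.

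First observe that $\lambda_2(G)=\Phi_2(G)^{-1}$ equals the spectral gap $\mu_1$ of the normalized Laplacian $I-P$, with $P(u,v)=\mathbf 1_{u\sim v}/\deg(u)$. The Rayleigh quotient of $\cE_2(f)$ against the degree-weighted variance is exactly the quotient defining $\Phi_2$.

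Take $\psi$ to be a right eigenvector of $P$ with $P\psi=(1-\mu_1)\psi$, normalized so that $\lVert\psi\rVert_\infty=1$, and set $f_0=\psi$. At $p=2$ a single update is linear, $f^{\#u}=f+e_u\bigl((Pf)(u)-f(u)\bigr)$. Hence
\[
 \E[f_{t+1}\mid f_t]=\Bigl(I-\tfrac1n(I-P)\Bigr)f_t ,
\]
and therefore $\E f_t=(1-\mu_1/n)^t\psi$.

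Next, pick a vertex $u^\ast$ with $|\psi(u^\ast)|=1$. Since $\psi$ is orthogonal to constants in $\ell^2(\pi)$, there is a vertex $w$ where $\psi(w)$ has the opposite sign, or is zero. We need a linear functional that bounds the oscillation. A clean choice is $\langle f,\psi\rangle_\pi$. Because $\langle \mathbf 1,\psi\rangle_\pi=0$, for every $c$ we have
\[
 |\langle f,\psi\rangle_\pi|=|\langle f-c,\psi\rangle_\pi|\le \osc(f)\,\lVert\psi\rVert_{L^1(\pi)}
\]
when $c$ is the midpoint of the range of $f$; the precise bound is $\osc(f)/2\cdot\lVert\psi\rVert_1$. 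At time $0$ this functional equals $\lVert\psi\rVert_{L^2(\pi)}^2$.

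Set $M_t=(1-\mu_1/n)^{-t}\langle f_t,\psi\rangle_\pi$. This is a martingale, and it is bounded on $t\le\tau$ because $\osc$ is nonincreasing and the factor $(1-\mu_1/n)^{-t}$ is controlled on the relevant range. Apply optional stopping at $\tau\wedge T$. At time $\tau$ we have $|\langle f_\tau,\psi\rangle_\pi|\le\tfrac{\varepsilon}{2}\lVert\psi\rVert_1$.

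The main obstacle is the ratio $\lVert\psi\rVert_2^2/\lVert\psi\rVert_1$, which can be small. A sup-norm normalization alone does not give an $n$-independent constant. To avoid this, I will instead use the paper's intended route: cite the Elboim--Peres--Peretz lower bound quoted in the introduction, $\mathsf T_2(G,\varepsilon)\ge\mathsf T_2(G,1/2)\ge c\,n\,t_{\mathrm{rel}}(G)=c\,n/\lambda_2(G)$. This is valid for $\varepsilon\le1/2$ by monotonicity of $\tau$ in $\varepsilon$. If a self-contained proof is wanted, take the martingale argument above, with $f_0$ a sign-adjusted rescaling of $\psi$ and optional stopping bounding $\E(1-\mu_1/n)^{-\tau}$ from below. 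Jensen's inequality then gives $\E\tau\gtrsim n/\mu_1$ times the logarithm of the initial-to-terminal functional ratio, which is at least a constant once $\varepsilon\le1/2$ and $f_0$ is chosen so that the ratio is at least $2$.

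Finally, combining the two bounds gives
\[
 \mathsf T_p(G,\varepsilon)\le C_p\frac{n}{\lambda_2(G)}\log\frac{en}{\varepsilon}\le \frac{C_p}{c}\log\frac{en}{\varepsilon}\,\mathsf T_2(G,\varepsilon),
\]
which is \eqref{eq:Tp-vs-T2}.
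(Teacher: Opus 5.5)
Your argument is the paper's proof: combine \eqref{eq:Tp-vs-gap2} with the converted Elboim--Peres--Peretz bound $\mathsf T_2(G,\varepsilon)\ge\mathsf T_2(G,1/2)\ge cn/\lambda_2(G)$, which uses monotonicity of $\tau_2$ in $\varepsilon$ and $\lambda_2=\mu_1$. Drop the optional self-contained aside as written, because a lower bound on $\E(1-\mu_1/n)^{-\tau}$ gives no lower bound on $\E\tau$ (Jensen runs the other way); a working version uses the functional $f\mapsto f(u^\ast)-\langle f,\one\rangle_\pi$ at a fixed time $t\approx n/(4\mu_1)$, whose expectation is exactly $(1-\mu_1/n)^t$ when $\psi(u^\ast)=1$ and which is bounded by $\osc(f)$, so no $L^2/L^1$ ratio appears.
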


\begin{proof}
For asynchronous DeGroot dynamics,
\cite[Claim~6.1]{elboim2024asynchronousdegrootdynamics} gives a spectral
lower bound under rate-one clocks at every vertex.  Converting it to the
present discrete update count gives
\[
  \mathsf T_2(G,1/2)\ge c\frac{n}{\lambda_2(G)}.
\]
Since $\mathsf T_2(G,\varepsilon)\ge\mathsf T_2(G,1/2)$ for
$0<\varepsilon\le1/2$, combining this bound with
\eqref{eq:Tp-vs-gap2} proves \eqref{eq:Tp-vs-T2}.
\end{proof}

Since $\mathsf T_p$ and $\mathsf T_2$ take their
suprema over initial profiles independently, \eqref{eq:Tp-vs-T2} does not
address the same-initial-profile comparison asked in
\cite[Question~6.1]{amir2025convergencerateellpenergyminimization}.

\subsection{The quadratic case}

Let $\mathbf L$ and $\mathbf D$ be the combinatorial Laplacian and degree
matrix.  Let $\mu_1(G)$ denote the first positive eigenvalue of the normalized
Laplacian $\mathbf D^{-1/2}\mathbf L\mathbf D^{-1/2}$.
The standard normalized-Laplacian spectral calculation gives
\[
 \lambda_2(G)=\mu_1(G);
\]
we use this familiar quadratic identity without a separate proof; see
Chung~\cite[Section~1.2]{chung1997spectral}.

\section{Lower-bound mechanisms}\label{sec:lower}

\subsection{A standard coupon obstruction}

\paragraph{Coupon observation.}
Partition $V=A\sqcup C$, where $a=|A|=\lfloor n/2\rfloor$ and
$b=|C|=\lceil n/2\rceil$, and put $f_0=\one_A$.  For nonempty $B\subseteq V$, let
\[
 \sigma_B=\min\{t\ge0:B\subseteq\{U_s:1\le s\le t\}\}
\]
be its cover time.  Before $\sigma_A\wedge\sigma_C$, untouched vertices in
$A$ and $C$ retain the values one and zero, respectively.  Hence
$\tau_p(\epsilon;f_0)\ge\sigma_A\wedge\sigma_C$ whenever $0<\epsilon<1$.

Writing $H_k=\sum_{j=1}^k j^{-1}$, the usual coupon-collector calculation
gives $\E\sigma_B=nH_{|B|}$: with $j$ vertices of $B$ still unseen, the wait
for a new one has mean $n/j$.  Since
$\sigma_A\vee\sigma_C=\sigma_V$, the identity
$x\wedge y+x\vee y=x+y$ gives
\[
 \E(\sigma_A\wedge\sigma_C)
 =n(H_a+H_b-H_n)\ge\frac n4\log n.
\]
Indeed,
$H_n-H_b=\sum_{j=1}^a(b+j)^{-1}\le H_a/2$ and
$H_a\ge\log(a+1)\ge\frac12\log n$.  Consequently,
\[
 \sup_{f_0\in\{0,1\}^V}\E\tau_p(\epsilon;f_0)
 \ge \frac n4\log n,
 \qquad p\in(1,\infty],\quad0<\epsilon<1.
\]

\subsection{Causal propagation}

The next result applies to every $p\in(1,\infty]$.  Its input is the size of
one synchronous update, not an energy or spectral estimate.

\begin{thm}[Propagation lower bound]\label{thm:propagation-lower}
Let $p\in(1,\infty]$ and let $f_0:V\to\R$ be nonconstant.  Set
\[
 R=\osc(f_0),\qquad
 \psi=\lVert Sf_0-f_0\rVert_\infty.
\]
Then $\psi>0$, and for every $0<\epsilon\le R/2$,
\begin{equation}\label{eq:propagation-lower}
 \E\tau_p(\epsilon;f_0)
 \ge \frac{1}{64e}\frac{nR}{\Delta\psi}.
\end{equation}
\end{thm}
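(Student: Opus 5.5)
The plan is a light-cone argument: a vertex value can move away from its initial value only through a chain of updates along adjacent vertices, and each link of the chain moves it by at most $\psi$.

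\emph{Positivity of $\psi$.} If $\psi=0$ then $Sf_0=f_0$, so every vertex equals the local mean of its neighbours. At a maximiser of $f_0$, Lemma~\ref{lem:mean-properties}(a) forces all neighbours to be maximal as well; for $p=\infty$ the midrange gives the same conclusion. Connectivity then makes $f_0$ constant, contradicting the hypothesis.

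\emph{Causal-depth bound.} Set $e_t(x)=|f_t(x)-f_0(x)|$. Suppose $x$ is updated at time $t$. Then $f_t(x)=m_p((f_{t-1}(v))_{v\sim x})$ and $(Sf_0)(x)=m_p((f_0(v))_{v\sim x})$. Lemma~\ref{lem:mean-properties}(c) and the triangle inequality give
\[
 e_t(x)\le \max_{v\sim x}e_{t-1}(v)+\psi .
\]
Define the causal depth $h_t(x)$ recursively by $h_0\equiv0$, $h_t(U_t)=1+\max_{v\sim U_t}h_{t-1}(v)$, and $h_t(y)=h_{t-1}(y)$ for $y\ne U_t$. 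Induction on $t$ gives $e_t\le\psi h_t$. Moreover, $h_t(x)\ge k$ means there exist times $s_1<\dots<s_k\le t$ such that $x_i:=U_{s_i}$ forms a walk in $G$ (consecutive vertices adjacent) with $x_k=x$.

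\emph{Reduction to two fixed vertices.} Let $u^*$ and $w^*$ attain $\max f_0$ and $\min f_0$. If $\osc(f_t)\le\epsilon\le R/2$, then $f_t(u^*)-f_t(w^*)\le R/2$. Hence $e_t(u^*)+e_t(w^*)\ge R/2$, so $h_t(x)\ge R/(4\psi)$ for some $x\in\{u^*,w^*\}$. Put $k=\lceil R/(4\psi)\rceil\ge1$. The point of fixing the two endpoints $u^*,w^*$ in advance is that it avoids a union bound over all $n$ possible endpoints of the chain, which would cost a factor $n$.

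\emph{Chain counting.} Walks of length $k$ ending at $u^*$ or $w^*$ number at most $2\Delta^{k-1}$. A fixed walk is realised at prescribed increasing times with probability $n^{-k}$, and there are at most $\binom tk\le (et/k)^k$ choices of times. Therefore
\[
 \Pr(\tau_p(\epsilon;f_0)\le t)\le \frac{2}{\Delta}\Bigl(\frac{e t\Delta}{nk}\Bigr)^k .
\]
Choose $t=nk/(4e\Delta)$. For $k\ge2$ the right side is at most $2\cdot4^{-k}\le1/8$. Hence $\E\tau\ge \tfrac78 t\gtrsim nk/\Delta\ge nR/(4\cdot 4e\Delta\psi)$, which has the form \eqref{eq:propagation-lower} after adjusting constants.

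\emph{Small depth.} The case $k=1$, i.e.\ $\psi\ge R/4$ (recall $\psi\le R$), is handled separately. Consensus requires at least one update at $u^*$ or $w^*$, and the waiting time for this has mean $n/2\ge nR/(64e\Delta\psi)$.

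The main obstacle is keeping the union bound free of a factor $n$. Fixing the two extremal vertices as chain endpoints solves this. The remaining care is bookkeeping: rounding of $k$, making sure the choice $t\ge1$ is legitimate, and tracking constants so that the stated $1/(64e)$ is reached.
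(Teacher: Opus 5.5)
Your proof is correct and follows the paper's argument. It uses the same causal-height recursion with $|f_t-f_0|\le\psi H_t$, the same union bound over at most $\Delta^{k-1}$ nearest-neighbor chains ending at the two fixed extremal vertices, and the same time scale $t\approx nk/(4e\Delta)$. The differences are cosmetic: you take $k=\lceil R/(4\psi)\rceil$ via the contrapositive and treat $k=1$ separately (it also follows from your bound $2\cdot4^{-1}=1/2$), whereas the paper's $k=\lfloor R/(8\psi)\rfloor+1$ covers all cases at once. One small attribution slip: the strict maximum principle, that a local mean equals the largest input only if all inputs are equal, comes from the first-order condition or the midrange formula, not from Lemma~\ref{lem:mean-properties}(a) alone.
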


\begin{proof}
If $Sf_0=f_0$, consider a vertex where $f_0$ is maximal.  A local $p$-mean
can equal the largest neighboring value only if all its inputs have that
value.  Connectivity then makes $f_0$ constant.  Hence $\psi>0$.

For $u\in V$, write
\[
 M_u(f)=m_p\bigl((f(v))_{v\sim u}\bigr).
\]
Lemma~\ref{lem:mean-properties} gives the stability estimate
\begin{equation}\label{eq:local-stability}
 |M_u(f)-M_u(g)|
 \le \max_{v\sim u}|f(v)-g(v)|.
\end{equation}

We now attach a nonnegative integer height to each vertex.  Initially,
$H_0(u)=0$.  At time $t$, if $u\ne U_t$, let
$H_t(u)=H_{t-1}(u)$; at the updated vertex set
\begin{equation}\label{eq:height-recursion}
 H_t(U_t)=1+\max_{v\sim U_t}H_{t-1}(v).
\end{equation}
We claim that
\begin{equation}\label{eq:height-controls-change}
 |f_t(u)-f_0(u)|\le\psi H_t(u)
 \qquad (u\in V,t\ge0).
\end{equation}
The claim holds at time zero.  If it holds at time $t-1$, then at the updated
vertex, \eqref{eq:local-stability} and the definition of $\psi$ give
\begin{align*}
 |f_t(U_t)-f_0(U_t)|
 &\le |M_{U_t}(f_{t-1})-M_{U_t}(f_0)|
      +|M_{U_t}(f_0)-f_0(U_t)|\\
 &\le \psi\left(1+\max_{v\sim U_t}H_{t-1}(v)\right).
\end{align*}
All other coordinates are unchanged, proving the induction step.

For every integer $k\ge1$, tracing \eqref{eq:height-recursion} backwards
shows that $H_t(u)\ge k$ requires a nearest-neighbor sequence
$x_1,\ldots,x_k=u$ and times
$1\le s_1<\cdots<s_k\le t$ with $U_{s_i}=x_i$.  There are at most
$\Delta^{k-1}$ such vertex sequences.  Since each specified list of labels
has probability $n^{-k}$, a union bound and
$\binom{t}{k}\le(et/k)^k$ give
\begin{equation}\label{eq:height-tail}
 \Pr(H_t(u)\ge k)
 \le \Delta^{k-1}\binom{t}{k}n^{-k}
 \le \Delta^{-1}\left(\frac{e\Delta t}{nk}\right)^k
 \le \left(\frac{e\Delta t}{nk}\right)^k,
\end{equation}
where $\Delta\ge1$ because $G$ is connected and $n\ge2$.

Choose vertices $a,b$ with $f_0(a)-f_0(b)=R$, and put
\[
 k=\left\lfloor\frac{R}{8\psi}\right\rfloor+1,
 \qquad
 t=\left\lfloor\frac{nk}{4e\Delta}\right\rfloor.
\]
By \eqref{eq:height-tail}, each of
$\Pr(H_t(a)\ge k)$ and $\Pr(H_t(b)\ge k)$ is at most $4^{-k}$.
Thus, with probability at least $1/2$, both heights are smaller than $k$.
On that event, \eqref{eq:height-controls-change} gives
\[
 f_t(a)-f_t(b)
 \ge R-2\psi(k-1)
 \ge \frac{3R}{4}.
\]
In particular, $\osc(f_t)>\epsilon$ whenever $\epsilon\le R/2$.
Oscillation is nonincreasing by Lemma~\ref{lem:oscillation-energy}, so on
this event the process cannot have crossed the threshold earlier; hence
$\tau_p(\epsilon;f_0)>t$.  Since $t+1>nk/(4e\Delta)$,
\[
 \E\tau_p(\epsilon;f_0)
 \ge (t+1)\Pr(\tau_p(\epsilon;f_0)>t)
 >\frac{nk}{8e\Delta}
 \ge\frac{nR}{64e\Delta\psi},
\]
as required.
\end{proof}

\subsection{The Lipschitz-learning endpoint}

For $p=\infty$, the local rule \eqref{eq:infinity-mean} replaces a selected
value by the midrange of its neighbors.  This is the asynchronous
Lipschitz-learning dynamics.  We first construct the slowly changing profile
used for its diameter lower bound.

The next lemma is the two-point-boundary specialization of the finite-graph
minimal Lipschitz-extension construction in
\cite[Proposition~3.1]{amir2025convergencerateellpenergyminimization}.

\begin{lem}[Minimal Lipschitz extension]
\label{lem:minimal-lipschitz-extension}
Let $s,t\in V$ satisfy $\operatorname{dist}(s,t)=D=\diam(G)$.  There exists
$g:V\to[-1,1]$ such that
\begin{enumerate}[label=(\alph*)]
\item $g(s)=-1$ and $g(t)=1$;
\item $|g(u)-g(v)|\le2/D$ for every edge $\{u,v\}$; and
\item for every $u\notin\{s,t\}$,
\begin{equation}\label{eq:infinity-harmonic}
 g(u)=\frac{\max_{v\sim u}g(v)+\min_{v\sim u}g(v)}2.
\end{equation}
\end{enumerate}
\end{lem}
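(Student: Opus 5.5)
Our plan is to obtain $g$ as the infinity-harmonic extension of the boundary data $g(s)=-1$, $g(t)=1$, constructed by a fixed-point or monotone-iteration argument, and then to verify the Lipschitz bound (b) separately.

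\emph{Existence.} Consider the compact convex set
\[
 K=\{h:V\to[-1,1]:\ h(s)=-1,\ h(t)=1,\ |h(u)-h(v)|\le 2/D\text{ for every edge}\}.
\]
It is nonempty: $h_0(u)=-1+2\operatorname{dist}(s,u)/D$, truncated to $[-1,1]$, belongs to it, because $\operatorname{dist}(s,t)=D$ forces $h_0(t)=1$. Define the map $T$ that fixes the values at $s$ and $t$ and replaces every other value by the neighbor midrange $m_\infty$. This map is continuous and, by Lemma~\ref{lem:mean-properties}, maps $[-1,1]^V$ into itself. It need not preserve $K$, however, so we do not apply Brouwer to $K$ directly. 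Instead we run a monotone iteration. Start from the subsolution
\[
 \underline h(u)=\max\{-1,\ 1-2\operatorname{dist}(u,t)/D\},
\]
with the boundary values imposed. One must check that $\underline h\le T\underline h$ off $\{s,t\}$. At a vertex $u$, a neighbor closer to $t$ has value $\underline h(u)+2/D$, and every neighbor has value at least $\underline h(u)-2/D$, so the midrange is at least $\underline h(u)$. The boundary value at $s$ also lies below the neighbors' values. Monotonicity of $m_\infty$ then makes the iterates $T^k\underline h$ nondecreasing and bounded by $1$. Their limit is a fixed point $g$, which gives (a) and (c).

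\emph{Lipschitz bound (b).} This is the main obstacle. A fixed point of the midrange rule obeys a comparison principle: the standard argument is that, for an infinity-harmonic function on a finite graph with boundary $\{s,t\}$, the maximum edge gradient is attained on an edge incident to the boundary. I would prove it as follows. Let $\delta=\max_{\text{edges}}|g(u)-g(v)|$, and take an edge $\{u,v\}$ with $g(v)-g(u)=\delta$ and $u$ interior. The midrange identity at $u$ produces a neighbor $w$ with $g(u)-g(w)\ge\delta$, so equality holds. Following these steepest edges in both directions gives a path on which $g$ changes by exactly $\delta$ per step. The path cannot revisit a vertex because $g$ is strictly monotone along it, so it must terminate at $s$ and at $t$. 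Its length $\ell$ is at least $\operatorname{dist}(s,t)=D$, so $2=\ell\delta\ge D\delta$, which yields $\delta\le 2/D$. The same conclusion holds if the steepest edge touches the boundary, by extending only from its interior endpoint. The range statement $g\in[-1,1]$ follows from Lemma~\ref{lem:mean-properties}(a) together with the maximum principle.

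Alternatively, one can cite the cited Proposition~3.1 of Amir et al.\ directly, since the statement is declared a specialization of it. The self-contained route above is what I would write out, with the steepest-path argument as the step requiring the most care. In particular, one must ensure the extended path is simple and terminates only at $\{s,t\}$.
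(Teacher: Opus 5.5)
Your proof is correct, but it takes a different route from the paper's.

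\textbf{The paper's route.} The paper picks, among all extensions $h:V\to[-1,1]$ with $h(s)=-1$ and $h(t)=1$, one whose nonincreasingly sorted vector of edge differences is lexicographically minimal. Property (b) is then immediate, since the first coordinate (the maximal edge gradient) is no larger than that of the truncated distance profile $q$. The real work is (c): at a vertex violating the midrange identity, a small local perturbation strictly decreases the sorted vector, and ties have to be handled with care.

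\textbf{Your route.} You reverse this division of labor. The monotone iteration from the subsolution $1-2\operatorname{dist}(\cdot,t)/D$ gives (a) and (c) at once from the fixed point. Your check here is right: the truncation at $-1$ is inactive because $\operatorname{dist}(u,t)\le D$, and the closer neighbor together with the $-2/D$ lower bound makes the midrange dominate. The steepest-path argument then proves (b).

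\textbf{What each approach buys.} Your steepest-path step proves more than the lemma needs. It shows that \emph{every} function satisfying (a) and (c) obeys (b), not just one specially selected extension. It even identifies the maximal gradient as $2/\ell$ for the length $\ell\ge D$ of some $s$--$t$ path. The paper's variational route gets existence and (b) together by compactness, and it matches the general boundary-set construction of the cited Proposition~3.1.

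\textbf{Two points to state explicitly when you write it out.}
\begin{enumerate}
\item If the steepest edge has both endpoints in $\{s,t\}$, then $s\sim t$ and $D=1$, so (b) is trivial.
\item The descending end of your path is $s$ and the ascending end is $t$. This holds because $g$ is strictly monotone along the concatenated path, so its two ends are distinct boundary vertices, and $g(s)<g(t)$. Hence $\ell\delta=g(t)-g(s)=2$.
\end{enumerate}
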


\begin{proof}
Among all extensions $h:V\to[-1,1]$ with $h(s)=-1$ and $h(t)=1$, list the
numbers $|h(u)-h(v)|$, one for each edge, in nonincreasing order.  Choose an
extension $g$ for which this finite vector is lexicographically minimal.
Existence follows by minimizing its coordinates successively over a compact
set.

Fix $u\notin\{s,t\}$, and let
$a=\min_{v\sim u}g(v)$, $b=\max_{v\sim u}g(v)$, and $c=g(u)$.  If $a=b$, then
lexicographic minimality forces $c=a$: when $c>a$, replacing $c$ by
$c-\delta$ for any sufficiently small $0<\delta<c-a$ strictly decreases every
incident edge difference, and when $c<a$ the symmetric upward perturbation
does the same.  Thus assume $a<b$.  Suppose first that $c>(a+b)/2$, and put
\[
 M_-=c-a,
 \qquad M_+=|b-c|.
\]
Then $M_->M_+$.  Choose
\[
 0<\delta<\frac12(M_--M_+)
\]
small enough that $c-\delta\in[-1,1]$, and replace $g(u)$ by
$c-\delta$.  Every new incident edge difference is at most
\[
 \max\{M_--\delta,M_++\delta,\delta\}<M_-.
\]
Before the perturbation, at least one incident edge difference was exactly
$M_-$, while all nonincident edge differences are unchanged.  Thus, even if
other incident or nonincident edges tie at $M_-$, the number of entries equal
to $M_-$ in the globally sorted vector strictly decreases, no larger entry
changes, and no new entry reaches $M_-$.  The sorted vector has therefore
decreased lexicographically, a contradiction.  If $c<(a+b)/2$, the symmetric
perturbation increasing $c$ gives the same contradiction, with
$M_+=b-c>M_-=|c-a|$.  Hence $c=(a+b)/2$, proving
\eqref{eq:infinity-harmonic}.

The comparison extension
\[
 q(v)=\max\left\{-1,\min\left\{1,-1+
 \frac{2\operatorname{dist}(s,v)}D\right\}\right\}
\]
has the prescribed boundary values and changes by at most $2/D$ across an
edge.  The first coordinate of the lexicographic vector for $g$ is no larger
than that for $q$, proving (b).
\end{proof}

\begin{thm}[Diameter lower bound for Lipschitz learning]
\label{thm:lipschitz-diameter-lower}
Let $G$ have $n$ vertices, maximum degree $\Delta$, and diameter $D$.
There exists $f_0\in[-1,1]^V$ such that, for every
$0<\epsilon\le1$,
\begin{equation}\label{eq:lipschitz-diameter-lower}
 \E\tau_\infty(\epsilon;f_0)
 \ge \frac{1}{16e\pi^2}\frac{nD^2}{\Delta}.
\end{equation}
\end{thm}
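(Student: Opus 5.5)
The plan is to feed a carefully chosen profile into the propagation lower bound, Theorem~\ref{thm:propagation-lower}, with $p=\infty$. That theorem gives $\E\tau_\infty(\epsilon;f_0)\ge nR/(64e\Delta\psi)$ for $0<\epsilon\le R/2$. To obtain $nD^2/\Delta$ with $R=2$, we need a profile whose synchronous midrange update moves every vertex by only $\psi=O(D^{-2})$. The minimal Lipschitz extension of Lemma~\ref{lem:minimal-lipschitz-extension} is a natural starting point, but it is not good enough on its own. It is exactly midrange-harmonic away from $s,t$, yet at the endpoints $s$ and $t$ the update can move the value by order $2/D$. I would therefore post-compose it with a smooth monotone map that is flat at $\pm1$, namely
\[
 f_0=\phi\circ g,\qquad \phi(x)=\sin(\pi x/2).
\]
Then $f_0\in[-1,1]^V$ and $f_0(t)-f_0(s)=2$, so $R=2$ and every $0<\epsilon\le1$ is admissible.

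The key step is the bound $\psi=\lVert Sf_0-f_0\rVert_\infty\le \pi^2/(2D^2)$. Since $\phi$ is increasing, the maximum and minimum over the neighbors of $u$ of $f_0$ are $\phi(b)$ and $\phi(a)$, where $b=\max_{v\sim u}g(v)$ and $a=\min_{v\sim u}g(v)$. I would then split into two cases.
\begin{enumerate}[label=(\roman*)]
\item For $u\notin\{s,t\}$, Lemma~\ref{lem:minimal-lipschitz-extension}(c) gives $g(u)=c=(a+b)/2$. Setting $h=(b-a)/2$, part (b) gives $h\le2/D$, since both $a$ and $b$ are within $2/D$ of $c$. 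Then
\[
 (Sf_0)(u)-f_0(u)=\tfrac12\bigl(\phi(c+h)+\phi(c-h)\bigr)-\phi(c),
\]
and Taylor's theorem with $\sup|\phi''|=\pi^2/4$ bounds its absolute value by $\tfrac{h^2}{2}\cdot\tfrac{\pi^2}{4}\le\pi^2/(2D^2)$.
\item At $u=s$, every neighbor satisfies $g(v)\in[-1,-1+2/D]$. The midrange of the neighboring $f_0$-values therefore lies in $[\phi(-1),\phi(-1+2/D)]$, so the change is at most $\phi(-1+2/D)-\phi(-1)=1-\cos(\pi/D)\le\pi^2/(2D^2)$. The vertex $t$ is symmetric.
\end{enumerate}

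Plugging $R=2$ and $\psi\le\pi^2/(2D^2)$ into \eqref{eq:propagation-lower} gives
\[
 \E\tau_\infty(\epsilon;f_0)\ge \frac{2n}{64e\Delta}\cdot\frac{2D^2}{\pi^2}=\frac{nD^2}{16e\pi^2\Delta},
\]
exactly the constant in \eqref{eq:lipschitz-diameter-lower}. I expect the main obstacle to be the endpoint bookkeeping: noticing that $g$ itself fails at $s$ and $t$, and choosing a $\phi$ with vanishing derivative at $\pm1$ so that both the interior second-order estimate and the boundary first-order estimate are $O(D^{-2})$ with matching constants. Degenerate cases need only a brief check. If $D=1$, the bound $1-\cos\pi=2\le\pi^2/2$ still holds. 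Nonconstancy of $f_0$ is clear since $R=2$.
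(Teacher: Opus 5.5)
Your proposal is correct and matches the paper's proof: the same profile $f_0=\sin(\pi g/2)$ built from the minimal Lipschitz extension, the same endpoint estimate $1-\cos(\pi/D)\le\pi^2/(2D^2)$, and the same application of Theorem~\ref{thm:propagation-lower} with $R=2$. The only cosmetic difference is at interior vertices, where you bound the symmetric second difference by Taylor's theorem while the paper uses the exact identity $(Sf_0)(u)=f_0(u)\cos(\pi(b-a)/4)$; both routes give $\pi^2/(2D^2)$.
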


\begin{proof}
Choose $s,t$ and $g$ as in
Lemma~\ref{lem:minimal-lipschitz-extension}, and define
\[
 f_0(v)=\sin\left(\frac{\pi g(v)}2\right).
\]
Then $f_0(s)=-1$, $f_0(t)=1$, and $\osc(f_0)=2$.

Fix $u\notin\{s,t\}$ and put
$a=\min_{v\sim u}g(v)$ and $b=\max_{v\sim u}g(v)$.  Since sine is
increasing on $[-\pi/2,\pi/2]$, the synchronous midrange update and
\eqref{eq:infinity-harmonic} give
\begin{align*}
 (Sf_0)(u)
 &=\frac12\left[
 \sin\left(\frac{\pi a}2\right)
 +\sin\left(\frac{\pi b}2\right)\right]\\
 &=f_0(u)\cos\left(\frac{\pi(b-a)}4\right).
\end{align*}
Property (b) of the lemma gives $b-a\le4/D$.  Using
$1-\cos x\le x^2/2$, we obtain
\begin{equation}\label{eq:interior-lipschitz-change}
 |(Sf_0)(u)-f_0(u)|\le\frac{\pi^2}{2D^2}.
\end{equation}

Every neighbor $v$ of $s$ has
$-1\le g(v)\le-1+2/D$.  Therefore
\[
 0\le f_0(v)+1
 =1-\cos\left(\frac{\pi(g(v)+1)}2\right)
 \le\frac{\pi^2}{2D^2}.
\]
The midrange of these neighboring values obeys the same bound, and the
argument at $t$ is symmetric.  Together with
\eqref{eq:interior-lipschitz-change}, this proves
\[
 \lVert Sf_0-f_0\rVert_\infty\le\frac{\pi^2}{2D^2}.
\]
Theorem~\ref{thm:propagation-lower}, with $R=2$, now gives
\eqref{eq:lipschitz-diameter-lower}.
\end{proof}

\section{Boxes}\label{sec:boxes}

\subsection{A tensor Poincar\'e inequality}

Write $[L]^d=\{0,\ldots,L-1\}^d$ and give it nearest-neighbor edges.

\begin{lem}[Tensor Poincar\'e inequality]\label{lem:box-poincare}
For every fixed $1<p<\infty$, all integers $d\ge1$ and $L\ge2$, and every
$f:[L]^d\to\R$,
\begin{equation}\label{eq:box-unweighted-poincare}
 \sum_{x\in[L]^d}|f(x)-\bar f|^p
 \le C_p d^{\max\{0,p/2-1\}}L^p\cE_p(f),
 \qquad \bar f=L^{-d}\sum_xf(x).
\end{equation}
Consequently,
\begin{equation}\label{eq:box-Phi}
 \Phi_p([L]^d)\le C_p
 \begin{cases}
  dL^p,&1<p<2,\\
  d^{p/2}L^p,&p\ge2.
 \end{cases}
\end{equation}
\end{lem}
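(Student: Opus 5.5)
Throughout, $\cE_p(f)$ splits as $\sum_{i=1}^d\cE_p^{(i)}(f)$, where $\cE_p^{(i)}$ collects the edges in coordinate direction $i$. The plan is to prove \eqref{eq:box-unweighted-poincare} by tensorizing the one-dimensional inequality, and then to read off \eqref{eq:box-Phi}.

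\emph{One dimension.} On the path $[L]$ we use
\[
 \sum_{x}|g(x)-\bar g|^p\le C_pL^p\sum_{j}|g(j+1)-g(j)|^p .
\]
To get it, bound $|g(x)-g(y)|^p\le L^{p-1}\sum_j|\nabla_jg|^p$ by H\"older along the segment. Average over $y$, use $|g(x)-\bar g|^p\le L^{-1}\sum_y|g(x)-g(y)|^p$ (Jensen), and sum over $x$.

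\emph{Case $1<p<2$.} Use the martingale telescoping
\[
 f-\bar f=\sum_{i=1}^d(E_{i-1}f-E_if),
\]
where $E_i$ averages over the first $i$ coordinates, with $E_0f=f$ and $E_df=\bar f$. For $p\le2$ we do not want to lose $d^{p-1}$ from the triangle inequality. So I would first try a Burkholder/Hanner-type square-function bound for $p<2$, which controls $\|\sum_i\Delta_i\|_p^p$ by a constant times $\sum_i\|\Delta_i\|_p^p$; this follows, for instance, from the martingale type $p$ property of $L^p$ with constant depending only on $p$. Each increment $E_{i-1}f-E_if$ is the 1D fluctuation in coordinate $i$ of $E_{i-1}f$. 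The 1D inequality, together with Jensen (averaging commutes with discrete gradients in direction $i$), bounds its $p$-th power sum by $C_pL^p\cE_p^{(i)}(f)$. Summing over $i$ gives the bound with no power of $d$.

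\emph{Case $p\ge2$.} Here the martingale is of cotype $p$, and one expects
\[
 \Bigl\|\sum_i\Delta_i\Bigr\|_p\le C_p\Bigl\|\bigl(\textstyle\sum_i|\Delta_i|^2\bigr)^{1/2}\Bigr\|_p
\]
by Burkholder's square-function inequality, with constant $\sim p$. Then H\"older in $i$, $(\sum_i|\Delta_i|^2)^{p/2}\le d^{p/2-1}\sum_i|\Delta_i|^p$, yields the factor $d^{p/2-1}$, and the per-coordinate bound concludes as before. I expect this step to be the main obstacle: one must check that the increments, being conditional-expectation differences along the product filtration, really form a martingale difference sequence under the uniform measure. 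They do, since $E_i$ are nested conditional expectations. One must also check that Burkholder's constant depends only on $p$.

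\emph{Consequence \eqref{eq:box-Phi}.} Degrees are at most $2d$, so take $c=\bar f$ in \eqref{eq:Phi-p}:
\[
 \inf_c\sum_x\deg(x)|f(x)-c|^p\le 2d\sum_x|f(x)-\bar f|^p .
\]
This gives $\Phi_p\le C_p\,d\cdot d^{\max\{0,p/2-1\}}L^p$, which is $dL^p$ for $p<2$ and $d^{p/2}L^p$ for $p\ge2$.
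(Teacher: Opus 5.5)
Your proposal is correct and is essentially the paper's proof. Both use a coordinate-by-coordinate martingale decomposition; you expose the coordinates in reverse order, which makes no difference. Both then apply Burkholder's square-function inequality, and bound $(\sum_i|\Delta_i|^2)^{p/2}$ by $\sum_i|\Delta_i|^p$ for $p<2$ (your ``martingale type $p$'' step) or by $d^{p/2-1}\sum_i|\Delta_i|^p$ for $p\ge2$. They finish with the one-dimensional H\"older--Jensen path inequality, Jensen over the averaged coordinates, and $\deg\le2d$.

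One terminological point: for $p\ge2$ the step you use is just Burkholder's inequality, valid for all $1<p<\infty$ with a constant depending only on $p$, not a cotype property.
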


\begin{proof}
Let $X=(X_1,\ldots,X_d)$ be uniform on $[L]^d$, expose its coordinates in
order, and set
\[
 M_i=\E_X[f(X)\mid X_1,\ldots,X_i],\qquad Z_i=M_i-M_{i-1}.
\]
Burkholder's martingale square-function inequality
\cite{burkholder1973distribution} gives
\[
 \E_X|f(X)-\bar f|^p
 \le C_p\E_X\left(\sum_{i=1}^dZ_i^2\right)^{p/2}.
\]
For $p<2$, the last power is at most $\sum_i|Z_i|^p$; for $p\ge2$, it is
at most $d^{p/2-1}\sum_i|Z_i|^p$.

We record the one-dimensional estimate used here.  If
$a_0,\ldots,a_{L-1}\in\R$ and $\bar a=L^{-1}\sum_i a_i$, then Jensen's
inequality, telescoping, and H\"older's inequality give
\begin{align*}
 \sum_{i=0}^{L-1}|a_i-\bar a|^p
 &\le \frac1L\sum_{i,j=0}^{L-1}|a_i-a_j|^p\\
 &\le L^p\sum_{k=0}^{L-2}|a_{k+1}-a_k|^p.
\end{align*}
Indeed, for $i<j$ one writes
$a_i-a_j=\sum_{k=i}^{j-1}(a_k-a_{k+1})$ and applies H\"older with
$j-i\le L$.  Apply this path inequality to $M_i$ as a function of $X_i$,
conditional on the preceding coordinates.  Jensen's inequality over the
unexposed coordinates then gives
\[
 \E_X|Z_i|^p\le C_p L^{p-d}
 \sum_{\substack{\{x,y\}\in E\\x-y=\pm e_i}}
 |f(x)-f(y)|^p.
\]
Summing in $i$ and multiplying by $L^d$ proves
\eqref{eq:box-unweighted-poincare}.

For any $f$, the minimizing degree-weighted $p$-center is no worse than the
choice $c=\bar f$, and $\deg(x)\le2d$.  Hence
\[
 \inf_c\sum_x\deg(x)|f(x)-c|^p
 \le2d\sum_x|f(x)-\bar f|^p.
\]
Together with \eqref{eq:box-unweighted-poincare}, this is
\eqref{eq:box-Phi}.
\end{proof}

\subsection{Slow profiles on boxes}

The next elementary estimate records the two one-dimensional profiles used
below.  Put $N=L-1$ and, for $r\ge2$, define
\begin{equation}\label{eq:graded-profile}
 H_r(t)=
 \begin{cases}
  -1+(2t)^r,&0\le t\le1/2,\\
  1-[2(1-t)]^r,&1/2\le t\le1.
 \end{cases}
\end{equation}

\begin{lem}\label{lem:graded-differences}
Let $h_i=H_r(i/N)$, with the evident interpretation when $N=1$.
\begin{enumerate}[label=(\alph*)]
\item If $1<p<2$, $q=p/(p-1)$, and $r=q$, then
\[
 \left|\sum_{j:\,|j-i|=1}
 |h_i-h_j|^{p-2}(h_i-h_j)\right|\le C_pL^{-p}
 \quad(0\le i\le N).
\]
\item If $r=2$, then the second differences at interior indices and the
one-sided first differences at the two endpoints are bounded by $C L^{-2}$.
\end{enumerate}
\end{lem}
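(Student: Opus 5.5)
The plan is to work directly with the explicit profile. Set $\delta=1/N$, put $t_i=i\delta$, and use the antisymmetry $H_r(1-t)=-H_r(t)$ together with the $C^1$ matching at $t=1/2$. For $r\ge2$ the function $H_r$ is $C^1$ on $[0,1]$, and it is $C^2$ away from $t=1/2$. Its derivative is $H_r'(t)=r2^r t^{r-1}$ on $[0,1/2]$. The second derivative is $r(r-1)2^r t^{r-2}$ there, and this is bounded on $[0,1/2]$ when $r\ge2$. When $1<r<2$, which is case (a) with $r=q>2$ since $q>2$ for $p<2$, the second derivative is still bounded. So in all cases the relevant bounds are $|H_r'|\le C_r$ and $|H_r''|\le C_r$ piecewise, with a jump of $H_r''$ at $1/2$ only. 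The case $N=1$ is trivial: $h_0=-1$, $h_1=1$, and every quantity is $O(1)=O(L^{-p})$ because $L=2$.

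Part (b) comes first. For $r=2$, Taylor's theorem with a piecewise-bounded second derivative gives $|h_{i+1}-2h_i+h_{i-1}|\le C\delta^2$ at interior $i$. This holds even when the stencil straddles $t=1/2$, because $H_2$ is $C^{1,1}$ with $\|H_2''\|_\infty\le8$. At the endpoints, $h_1-h_0=(2\delta)^2$ when $\delta\le1/2$, since $H_2'(0)=0$; the case $\delta>1/2$ means $N=1$. Since $\delta\asymp L^{-1}$, this gives the bound $CL^{-2}$.

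For part (a), write $\varphi_p(s)=|s|^{p-2}s$, so the sum is $\varphi_p(h_i-h_{i-1})+\varphi_p(h_i-h_{i+1})=\varphi_p(d_i)-\varphi_p(d_{i+1})$ with $d_i=h_i-h_{i-1}$, understood with the missing term at the endpoints. Endpoint $i=0$: $|d_1|=(2\delta)^q$, so $|\varphi_p(d_1)|=(2\delta)^{q(p-1)}=(2\delta)^p\le CL^{-p}$, because $q(p-1)=p$. This identity is the reason for choosing $r=q$. Interior $i$: the key observation is that $\varphi_p\circ H_q'$ is smooth. On $[0,1/2]$ we have $H_q'(t)=q2^q t^{q-1}$, so $\varphi_p(H_q'(t))=c\,t^{(q-1)(p-1)}=c\,t$, which is linear, because $(q-1)(p-1)=1$. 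By the mean value theorem, $d_i=\delta H_q'(\xi_i)$ for some $\xi_i\in(t_{i-1},t_i)$, which gives $\varphi_p(d_i)=\delta^{p-1}G(\xi_i)$ with $G=\varphi_p\circ H_q'$. The function $G$ is Lipschitz on $[0,1]$: it is linear increasing on $[0,1/2]$ and symmetric on $[1/2,1]$, so it is piecewise linear and continuous at $1/2$. Therefore
\[
|\varphi_p(d_i)-\varphi_p(d_{i+1})|\le\delta^{p-1}\operatorname{Lip}(G)|\xi_{i+1}-\xi_i|\le2\operatorname{Lip}(G)\,\delta^{p},
\]
which is $C_pL^{-p}$.

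The main obstacle I expect is justifying this mean-value replacement cleanly, because $\varphi_p$ is not Lipschitz at $0$. Using $G(\xi_i)$ exactly, which the mean value theorem supplies, sidesteps that issue, since all the nonlinearity is absorbed into $G$, which is Lipschitz. The one thing to double-check is that $H_q'$ does not change sign: it is nonnegative throughout, so $\varphi_p\circ H_q'$ is indeed $c\,t$ on the first half and $c(1-t)$ on the second.
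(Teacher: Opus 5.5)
Your proof is correct and is the paper's argument. In (a) both use the mean-value points $\xi_i$ together with the fact that $\varphi_p\circ H_q'=c\min\{t,1-t\}$ is Lipschitz (since $(q-1)(p-1)=1$); in (b) both use that $H_2'=8\min\{t,1-t\}$ is Lipschitz, and in both parts the endpoints are handled through the vanishing derivative at $0$ and $1$. The aside about ``$1<r<2$'' is a slip, since case (a) has $r=q>2$, but it plays no role in the argument.
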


\begin{proof}
For part (a), put $a=p-1$ and
$\delta_i=h_{i+1}-h_i$ for $0\le i<N$.  By the mean-value theorem, for some
$\xi_i\in(i/N,(i+1)/N)$,
\[
 \delta_i=N^{-1}H_q'(\xi_i),\qquad
 \delta_i^a=N^{-a}\bigl(H_q'(\xi_i)\bigr)^a.
\]
Here
\[
 q=\frac{p}{p-1}=1+\frac1a,
\]
and hence
\[
 F(t):=\bigl(H_q'(t)\bigr)^a
      =(q\,2^q)^a\min\{t,1-t\}.
\]
Thus $F$ is globally Lipschitz on $[0,1]$, including across $1/2$, and
$F(0)=F(1)=0$.  For $1\le i\le N-1$,
\begin{align*}
 \left|\sum_{j:\,|j-i|=1}
 |h_i-h_j|^{p-2}(h_i-h_j)\right|
 &=|\delta_{i-1}^a-\delta_i^a|\\
 &\le C_pN^{-a}|\xi_{i-1}-\xi_i|
 \le C_pN^{-p}.
\end{align*}
At the endpoints, $F(\xi_0)\le C_p/N$ and
$F(\xi_{N-1})\le C_p/N$, so the corresponding one-sided terms are also
bounded by $C_pN^{-p}$.

For part (b), redefine $\delta_i=h_{i+1}-h_i$ with $h_i=H_2(i/N)$.
Since
\[
 H_2'(t)=8\min\{t,1-t\},
\]
the derivative $H_2'$ is globally Lipschitz on $[0,1]$ and vanishes at both
endpoints.  Applying the mean-value theorem on consecutive mesh intervals
gives
\[
 |h_{i+1}-2h_i+h_{i-1}|=|\delta_i-\delta_{i-1}|\le CN^{-2}
 \qquad(1\le i\le N-1),
\]
and the two endpoint increments are bounded by $CN^{-2}$ as well.  Since
$N=L-1\ge L/2$, both conclusions have the stated bounds.
\end{proof}

\begin{thm}[Consensus bounds on a box]\label{thm:box-bounds}
Fix $1<p<\infty$, put $q=p/(p-1)$, and let $d,L\ge2$ be integers,
$n=L^d$, and
$0<\epsilon\le1$.  For every
$f_0\in[-1,1]^{[L]^d}$,
\begin{equation}\label{eq:box-upper}
 \E\tau_p(\epsilon;f_0)\le C_p
 \begin{cases}
 n d^{1/(p-1)}L^{p/(p-1)}\log(n/\epsilon),&1<p<2,\\
 n dL^2\log(n/\epsilon),&p\ge2.
 \end{cases}
\end{equation}
For the worst initial profile,
\begin{equation}\label{eq:box-lower}
 \sup_{f_0\in[-1,1]^{[L]^d}}\E\tau_p(1/2;f_0)\ge c_pn
 \begin{cases}
 \max\{d^{1/(p-1)-1}L^q,d\log L\},&1<p<2,\\
 dL^2,&p=2,\\
 \max\{d\log L,L^2\},&p>2.
 \end{cases}
\end{equation}
The terms inside a maximum may be realized by different profiles.
\end{thm}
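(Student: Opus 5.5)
The upper bound \eqref{eq:box-upper} should follow by chaining three earlier results. Lemma~\ref{lem:box-poincare} gives $\Phi_p([L]^d)\le C_pdL^p$ for $1<p<2$ and $\Phi_p\le C_pd^{p/2}L^p$ for $p\ge2$. Inserting this into Theorem~\ref{thm:nonlinear-poincare-comparison} yields
\[
\lambda_p\ge c_p d^{-1/(p-1)}L^{-p/(p-1)}\quad(1<p<2),\qquad \lambda_p\ge c_p(d^{p/2}L^p)^{-2/p}=c_pd^{-1}L^{-2}\quad(p\ge2).
\]
Theorem~\ref{thm:variational-upper} then bounds $\E\tau_p$ by $n/\lambda_p$ times a logarithm. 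For the logarithm we use $|E|\le dn$ and $D=d(L-1)\le n$, so $\log(2^p|E|D^{p-1}/\epsilon^p)+3\le C_p\log(n/\epsilon)$ once $n\ge4$.

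The lower bounds \eqref{eq:box-lower} come from the coupon observation and from Theorem~\ref{thm:propagation-lower} applied to product-type slow profiles. The coupon observation gives $\tfrac n4\log n=\tfrac n4 d\log L$, which covers every $d\log L$ term.

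For $p\ge2$ we use the one-coordinate profile $f_0(x)=H_2(x_1/N)$. Its oscillation is $R=2$. At any vertex, the $d-1$ coordinate pairs other than the first contribute equal values $f_0(x)$, so they have zero difference. For $p=2$ the mean is an average, so $|Sf_0-f_0|\le C L^{-2}/d$ by Lemma~\ref{lem:graded-differences}(b), because the second difference is divided by $\deg\ge d$. This gives $\psi\lesssim 1/(dL^2)$, and with $\Delta\le2d$ the propagation bound is $c\,nR/(\Delta\psi)\gtrsim nL^2$.

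That is short of $ndL^2$ by a factor $d$. I would recover it by taking the sum profile $f_0(x)=\tfrac1d\sum_iH_2(x_i/N)$, which is linear in the coordinates. For $p=2$ each coordinate contributes its second difference $\le CL^{-2}/d$, divided by $\deg\sim2d$ and summed over $d$ coordinates, so $\psi\lesssim L^{-2}/d$ while $R=2$. This gives $ndL^2$.

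For $p>2$ the $p$-mean is no longer linear, so I fall back to the single-coordinate profile. The $2d-2$ neighbors at zero difference pin the mean with weight growing only polynomially, and I must check that the monotone-perturbation estimate still gives $\psi\lesssim L^{-2}$. Lemma~\ref{lem:mean-properties} plus a first-order expansion of $\sum\varphi_p$ around the collinear configuration should suffice. The result is $nL^2/d\cdot d$, i.e.\ $\Delta\psi\lesssim dL^{-2}$, which gives only $nL^2$, matching the stated $\max\{d\log L,L^2\}$.

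For $1<p<2$ the inactive-neighbor mechanism is the key step. Take $f_0(x)=H_q(x_1/N)$. At a vertex, the two first-coordinate neighbors have differences $a_\pm$ with $|\varphi_p(a_+)+\varphi_p(a_-)|\le C_pL^{-p}$ by Lemma~\ref{lem:graded-differences}(a), and $2d-2$ neighbors have difference $0$. The update shift $g$ solves $\varphi_p(a_+-g)+\varphi_p(a_--g)+(2d-2)\varphi_p(-g)=0$. Since $\varphi_p$ is steep near $0$ when $p<2$, the zero-difference neighbors dominate. I will bound $(2d-2)|g|^{p-1}\lesssim |\varphi_p(a_+)+\varphi_p(a_-)|+(\text{change in the active terms})$. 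When $|g|\ll|a_\pm|$ the change term is $\lesssim|g||a|^{p-2}$, which is lower order here. This gives $|g|\lesssim (L^{-p}/d)^{1/(p-1)}$, and at the endpoints the one-sided bound gives the same.

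So $\psi\lesssim d^{-1/(p-1)}L^{-q}$, and Theorem~\ref{thm:propagation-lower} with $\Delta\le2d$ gives $n d^{1/(p-1)-1}L^{q}$. The main obstacle is making this $g$-estimate rigorous uniformly: showing that the active-pair correction $|g||a|^{p-2}$ does not spoil it, in particular near $x_1\approx N/2$ where $a_\pm$ are largest and near the endpoints where $a$ is tiny. I would first try a direct case split on $|g|\le|a|/2$ versus its complement, using \eqref{eq:phi-subquadratic-increment}.
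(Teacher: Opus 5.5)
Your upper bound and the coupon term coincide with the paper's argument, and your $1<p<2$ profile is the paper's. The genuine gaps are in the lower bounds for $p\ge2$. Both come from the factor $\Delta=2d$ in Theorem~\ref{thm:propagation-lower}.

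At $p=2$, your own numbers for the sum profile, $R=2$ and $\psi\lesssim(dL^2)^{-1}$, give $nR/(\Delta\psi)\gtrsim nL^2$. That is exactly what the one-coordinate profile gave, since both profiles have $\psi\asymp(dL^2)^{-1}$, so the promised $ndL^2$ does not follow. The paper does not obtain this term by propagation at all. It uses the spectral lower bound for asynchronous DeGroot dynamics \cite[Claim~6.1]{elboim2024asynchronousdegrootdynamics}, namely $\E\tau_2(1/2)\ge(10\gamma)^{-1}$ in continuous time, i.e.\ at least $n/(10\gamma)$ updates. It combines this with $\gamma([L]^d)\asymp(dL^2)^{-1}$: the reflected path has gap $1-\cos(\pi/L)$, the coordinate-averaged product chain divides this by $d$, and Dirichlet-form comparison transfers it to simple random walk because all degrees lie in $[d,2d]$.

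For $p>2$ the arithmetic slips the other way: $\psi\lesssim L^{-2}$ gives $\Delta\psi\lesssim dL^{-2}$ and hence only $nL^2/d$. This is not just a lossy estimate. Since $\varphi_p$ is flat at $0$, the inactive neighbours pull only with strength $r_x|g|^{p-1}$. So for $d\ll L^{p-2}$ the update of the one-coordinate profile in the bulk stays close to the half second difference $c\asymp L^{-2}$, and $\psi\asymp L^{-2}$ really holds for that profile.

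The remedy for $p>2$ is the profile you discarded, $f_0=\frac1d\sum_jH_2(x_j/N)$, which needs no linearity. Write each interior pair of increments as $c_j\pm s_j$, where $|c_j|\le C/(dL^2)$ by Lemma~\ref{lem:graded-differences}(b), and replace it by $\pm s_j$. Replace each boundary singleton, also of size at most $C/(dL^2)$, by $0$. The new multiset is symmetric, so its $p$-mean is $0$. Sup-norm nonexpansiveness (Lemma~\ref{lem:mean-properties}(c)) then gives $\psi\le C/(dL^2)$, hence the term $nL^2$.

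For $1<p<2$, the step you flag as the main obstacle needs no case split, and the split you propose is circular. The bound \eqref{eq:phi-subquadratic-increment} gives the small quantity $|g||a|^{p-2}$ only once you already know $|g|\ll|a_\pm|$. Otherwise it controls the active change only by $C_p|g|^{p-1}$, which competes with the inactive term $r_x|g|^{p-1}$ when $d$ is small. This happens, e.g., near $x_1=1$, where the active increments are themselves $O(L^{-q})$.

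Use the sign instead. Put $b_v=f_0(v)-f_0(x)$, $g=(Sf_0)(x)-f_0(x)$ and $F(y)=\sum_{v\text{ active}}\varphi_p(y-b_v)$. Then $F$ is increasing, $F(0)=B_{f_0}(x)$, and the first-order equation reads $r_x\varphi_p(g)=-F(g)$. Thus $g$ and $F(g)$ have opposite signs, so $|F(g)|\le|F(0)|$. Hence $r_x|g|^{p-1}\le|B_{f_0}(x)|\le C_pL^{-p}$ by Lemma~\ref{lem:graded-differences}(a). Note that the number $r_x$ of inactive neighbours lies in $[d-1,2d-2]$ rather than always equalling $2d-2$. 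The bound $r_x\ge d-1\ge d/2$ is what yields $\psi\le C_pd^{-1/(p-1)}L^{-q}$.
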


\begin{proof}
Theorem~\ref{thm:nonlinear-poincare-comparison} and
Lemma~\ref{lem:box-poincare} give
\[
 \lambda_p([L]^d)\ge c_p
 \begin{cases}
 d^{-1/(p-1)}L^{-p/(p-1)},&1<p<2,\\
 d^{-1}L^{-2},&p\ge2.
 \end{cases}
\]
The diameter and the number of edges are polynomial in $n$, so
Theorem~\ref{thm:variational-upper} gives \eqref{eq:box-upper}.

For $1<p<2$, put $q=p/(p-1)$ and take
$f_0(x)=H_q(x_1/N)$.  Fix a vertex and let $r_x$ be the number of its
neighbors in the inactive coordinates; $r_x\ge d-1$.  Write
\[
 g=(Sf_0)(x)-f_0(x),
 \qquad b_v=f_0(v)-f_0(x).
\]
The first-order equation for the local $p$-mean is
\[
 0=r_x|g|^{p-2}g+
 \sum_{\substack{v\sim x\\v\text{ active}}}
 |g-b_v|^{p-2}(g-b_v).
\]
Denote the sum by $F_x(g)$.  The function $F_x$ is increasing, and
$F_x(0)=B_{f_0}(x)$.  Hence $gF_x(0)\le0$; in particular, if
$F_x(0)=0$, then $g=0$.
The first-order equation and monotonicity of $F_x$ then give
\[
 r_x|g|^{p-1}=|F_x(g)|\le |F_x(0)|\le C_pL^{-p},
\]
where the last inequality is Lemma~\ref{lem:graded-differences}(a).
Since $d\ge2$ and $r_x\ge d-1\ge d/2$, we obtain
\[
 |g|\le C_p d^{-1/(p-1)}L^{-p/(p-1)}.
\]
Thus
\begin{equation}\label{eq:box-psi-subquadratic}
 \lVert Sf_0-f_0\rVert_\infty
 \le C_p d^{-1/(p-1)}L^{-p/(p-1)}.
\end{equation}

For $p>2$, take instead
\begin{equation}\label{eq:box-quadratic-profile}
 f_0(x)=\frac1d\sum_{j=1}^dH_2(x_j/N).
\end{equation}
For every interior coordinate $j$, define the signed neighbor increments
\begin{align*}
 \delta_j^\pm&=f_0(x\pm e_j)-f_0(x),\\
 c_j&=\frac{\delta_j^++\delta_j^-}{2},\\
 s_j&=\frac{\delta_j^+-\delta_j^-}{2}.
\end{align*}
The two increments are $c_j\pm s_j$.  The second-difference estimate in
Lemma~\ref{lem:graded-differences}(b) gives
\[
 |c_j|\le \frac{C}{dL^2}.
\]
At a boundary coordinate, the endpoint estimate in the same lemma gives
the bound $C/(dL^2)$ for the absolute value of the sole signed neighbor
increment.
Replace every interior pair according to
\[
 \{c_j+s_j,c_j-s_j\}\longmapsto\{s_j,-s_j\},
\]
and replace every boundary singleton by zero.  The resulting
multiset of increments is symmetric about zero, so its $p$-mean is zero.
Equivalently, the associated neighbor values have $p$-mean $f_0(x)$.
Every entry changed by at most $C/(dL^2)$; the sup-norm nonexpansiveness
in Lemma~\ref{lem:mean-properties} yields
\begin{equation}\label{eq:box-psi-superquadratic}
 \lVert Sf_0-f_0\rVert_\infty\le\frac{C}{dL^2}.
\end{equation}

Both nonlinear profiles have range two and the box has maximum degree at
most $2d$.
Theorem~\ref{thm:propagation-lower}, applied to
\eqref{eq:box-psi-subquadratic} and to
\eqref{eq:box-psi-superquadratic} when $p>2$, gives respectively the terms
$nd^{1/(p-1)-1}L^q$ and $nL^2$ in \eqref{eq:box-lower}.

 At $p=2$, let $P$ be the simple-random-walk matrix and let $\gamma$ be its
 spectral gap.  Claim~6.1 of Elboim et
 al.~\cite{elboim2024asynchronousdegrootdynamics} gives, in their
 continuous-time convention with one rate-one clock per vertex, a profile
 with $\E\tau_2(1/2)\ge(10\gamma)^{-1}$.  The embedded discrete chain has
 total clock rate $n$, so its expected update count is $n$ times the expected
 continuous time.  To identify $\gamma$, first consider the product chain
 that chooses one of the $d$ coordinates uniformly and, in that coordinate,
 moves left or right with probability $1/2$, replacing a missing endpoint
 move by holding.  Its stationary distribution is uniform.  The
 one-dimensional reflected path chain has eigenvalues $\cos(k\pi/L)$,
 $0\le k\le L-1$.  Since the product kernel is the average of the $d$
 coordinate kernels, its spectral gap is
 \[
  \gamma_{\rm prod}=\frac{1-\cos(\pi/L)}d\asymp\frac1{dL^2}.
 \]
 For the simple
 random walk $P$, every box degree lies between $d$ and $2d$.  Moreover, its
 stationary weights are proportional to degree and its edge conductances are
 constant, so both are comparable, by absolute factors, with the uniform
 stationary weights and edge conductances of the product chain.  The standard
 Dirichlet-form comparison therefore preserves the gap up to constants.
 These path, product, and comparison facts are in Levin and
 Peres~\cite[Sections~12.3, 12.4, and 13.3]{levin2017markov}; hence
 $\gamma([L]^d)\asymp(dL^2)^{-1}$.
 Thus Claim~6.1 gives the middle line of
\eqref{eq:box-lower}.  Finally, the coupon observation in
Section~\ref{sec:lower} gives $cn\log n=cnd\log L$.  It supplies the other
term both below and above two, and is dominated by $ndL^2$ at $p=2$.
Since the causal and coupon arguments use different initial profiles, their
conclusions combine after taking the supremum.
\end{proof}

\begin{cor}[Polylogarithmic comparison on boxes]
\label{cor:box-polylogarithmic-comparison}
Under the hypotheses of Theorem~\ref{thm:box-bounds}, define
\[
 U_p(d,L)=
 \begin{cases}
  nd^{1/(p-1)}L^{p/(p-1)},&1<p<2,\\
  ndL^2,&p\ge2.
 \end{cases}
\]
Then
\begin{equation}\label{eq:box-polylogarithmic-lower}
 \sup_{f_0\in[-1,1]^{[L]^d}}\E\tau_p(1/2;f_0)
 \ge c_p
 \begin{cases}
  U_p(d,L)/\log(en),&p\ne2,\\
  U_2(d,L),&p=2.
 \end{cases}
\end{equation}
Consequently, at fixed tolerance $1/2$, the upper and lower bounds in
Theorem~\ref{thm:box-bounds} differ by at most
$O_p((\log n)^2)$ for $p\ne2$ and by at most $O(\log n)$ for $p=2$.
The causal-profile term by itself leaves a factor $d$.  Once the coupon term
is included, however, the pointwise discrepancy can be smaller; see
Remark~\ref{rem:box-dimension-gap}.
\end{cor}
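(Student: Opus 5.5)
The lower bound \eqref{eq:box-polylogarithmic-lower} should follow directly from \eqref{eq:box-lower}, using only the causal-profile terms. The key observation is that on the box $\log(en)$ itself absorbs a factor $d$: since $n=L^d$ with $L\ge2$,
\[
 \log(en)=1+d\log L\ge d\log 2 .
\]
So $d\le C\log(en)$ with an absolute $C$, and losing one factor of $d$ costs at most one factor $\log(en)$. The coupon terms $d\log L$ in \eqref{eq:box-lower} will not be needed for this corollary.

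I would treat the three ranges of $p$ separately.

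\begin{enumerate}[label=(\roman*)]
\item For $1<p<2$, keep only the first entry of the maximum in \eqref{eq:box-lower}. It gives
\[
 \sup_{f_0}\E\tau_p(1/2;f_0)\ge c_p n d^{1/(p-1)-1}L^{q}=c_p\frac{U_p(d,L)}{d}\ge c_p'\frac{U_p(d,L)}{\log(en)},
\]
where the equality uses $q=p/(p-1)$ and the last step uses $d\le C\log(en)$.
\item For $p>2$, keep the term $nL^2$. Then $nL^2=U_p(d,L)/d\ge c\,U_p(d,L)/\log(en)$ by the same inequality.
\item For $p=2$, the middle line of \eqref{eq:box-lower} is already $c\,ndL^2=c\,U_2(d,L)$.
\end{enumerate}

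For the comparison statement, I would specialize \eqref{eq:box-upper} to $\epsilon=1/2$. This gives $\E\tau_p(1/2;f_0)\le C_pU_p(d,L)\log(2n)$ for every $f_0\in[-1,1]^{[L]^d}$, and $\log(2n)\le\log(en)$. Dividing this upper bound by \eqref{eq:box-polylogarithmic-lower} leaves a ratio of at most $C_p\log(en)^2=O_p((\log n)^2)$ for $p\ne2$, and at most $C\log(en)=O(\log n)$ for $p=2$. Since $n\ge4$, $\log(en)\asymp\log n$, so these are the claimed orders.

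The closing remark of the corollary is that the causal term alone is off by exactly a factor $d$ from $U_p$. This is the identity $U_p/d$ used in (i) and (ii), and it shows where the $d$ is lost.

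I do not expect a genuine obstacle. The one point needing care is the direction of the inequality $d\lesssim\log(en)$. It holds uniformly in $L\ge2$ only because $\log L\ge\log2$ is bounded below, so the constant is absolute. Beyond this, it is enough to check that the constants $c_p$ coming from Theorem~\ref{thm:box-bounds} depend only on $p$.
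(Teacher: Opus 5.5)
Your proposal is correct and follows essentially the same route as the paper. Like the paper, you bound $d\le\log(en)/\log 2$ from $n=L^d$ with $L\ge2$, identify the causal terms in \eqref{eq:box-lower} as $U_p(d,L)/d$, use the $p=2$ spectral line directly, and specialize \eqref{eq:box-upper} to $\epsilon=1/2$ to get the ratio bounds.
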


\begin{proof}
Because $n=L^d$ and $L\ge2$,
\[
 d\le\frac{\log n}{\log2}\le C\log(en).
\]
For $1<p<2$, the first term in \eqref{eq:box-lower} is
\[
 nd^{1/(p-1)-1}L^{p/(p-1)}=\frac{U_p(d,L)}d.
\]
For $p>2$, the $nL^2$ term in \eqref{eq:box-lower} equals
$U_p(d,L)/d$.  These identities and the preceding bound on $d$ prove the
first line of \eqref{eq:box-polylogarithmic-lower}.  At $p=2$, the middle
line of \eqref{eq:box-lower} is exactly $U_2(d,L)$.  Finally,
\eqref{eq:box-upper} at tolerance $1/2$ is at most
$C_pU_p(d,L)\log(en)$, which proves the comparison factors.
\end{proof}

\begin{rem}\label{rem:box-dimension-gap}
At tolerance $1/2$, after suppressing the multiplicative logarithm in
\eqref{eq:box-upper}, the ratio of its displayed scale $U_p(d,L)$ to the
detailed lower-bound scale in \eqref{eq:box-lower} is
\[
 \begin{cases}
  \displaystyle
  \min\left\{d,
  \frac{d^{(2-p)/(p-1)}L^{p/(p-1)}}{\log L}\right\},&1<p<2,\\[8pt]
  1,&p=2,\\[4pt]
  \displaystyle
  \min\left\{d,\frac{L^2}{\log L}\right\},&p>2.
 \end{cases}
\]
Thus the causal-profile term alone leaves a factor $d$, whereas the combined
lower bound leaves a pointwise ratio no larger than $d$ and sometimes
smaller.  In particular, for $p>2$ and fixed $L$, the coupon term gives the
same linear dependence on $d$ as the upper bound.  The two lower-bound
mechanisms do not yet convert the sharp local displacement
\eqref{eq:box-psi-subquadratic} or \eqref{eq:box-psi-superquadratic} into a
degree-free nonlinear influence estimate for all parameter ranges.
\end{rem}

\section{Trees}\label{sec:trees}

Throughout this section, fix $1<p<\infty$.  Let $G$ be a nontrivial finite
tree, and let $\Delta$ denote its actual maximum graph degree.  Root $G$ at
$r$.  A vertex with no
children is a leaf, and the weights $w_r(v)$ are defined recursively by
\begin{equation}\label{eq:tree-weight}
 w_r(v)=1\quad\hbox{at a leaf},\qquad
 w_r(v)=\left(\sum_{z\text{ child of }v}
 (w_r(z)+1)^{p-1}\right)^{1/(p-1)}
\end{equation}
at a nonleaf.  Put
\begin{equation}\label{eq:T-root}
 T_G(r)=\max_x\sum_{\substack{v\in\operatorname{Path}(x,r)\\v\ne r}}
 w_r(v),\qquad T_G=\min_{r\in V}T_G(r).
\end{equation}
Both $w_r$ and $T_G$ depend on the exponent fixed above; in particular,
$T_G=T_G^{(p)}$.  More explicit notation for the weights would be $w_{r,p}$,
but we suppress the index $p$ throughout this section.  Although rerooting
changes the individual weights
and the one-root quantity $T_G(r)$, the minimum in \eqref{eq:T-root} makes
$T_G$ a root-optimized, rerooting-invariant parameter of the unrooted tree,
independent of a distinguished drawing or orientation.

The bounds below should be read with their degree dependence visible.  At
constant accuracy, on bounded-degree trees $T_G$ controls the worst-profile
consensus time up to the displayed logarithmic factors.  On an
arbitrary-degree tree the available lower bound loses a factor of the actual
maximum degree $\Delta$; since $\Delta$ may grow polynomially with $n$, the
results do not give a degree-free characterization for all trees.

\subsection{Scalar residual inequalities}

We first isolate the local algebra.  Set $a=p-1$ and $q=p/a$.  The variables
below are magnitudes; their signs enter only through the scalar residual.

\begin{lem}[Power residual inequality]\label{lem:tree-power-residual}
Let $x_0,x_1,\ldots,x_k\ge0$, let $\sigma_i\in\{-1,1\}$, and put
\[
 \rho=\left|\sum_{i=0}^k\sigma_i x_i^a\right|.
\]
If $\alpha_i>0$ and $\sum_{i=1}^k\alpha_i=1$, then, for
$0<\varepsilon\le1$,
\begin{equation}\label{eq:tree-power-residual}
 x_0^p\le(1+\varepsilon)
 \sum_{i=1}^k\alpha_i^{-1/a}x_i^p
 +C_p\varepsilon^{-1/a}\rho^q.
\end{equation}
\end{lem}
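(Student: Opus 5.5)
The plan is to reduce \eqref{eq:tree-power-residual} to a two-step chain: first control $x_0^a$ by the other powers plus the residual, then pass from $a$-th powers to $p$-th powers with a weighted H\"older (or convexity) estimate. From the definition of $\rho$ and the triangle inequality,
\[
 x_0^a\le \sum_{i=1}^k x_i^a+\rho ,
\]
since $\sigma_0x_0^a=\sigma_0\bigl(\sum_{i\ge0}\sigma_ix_i^a\bigr)-\sigma_0\sum_{i\ge1}\sigma_ix_i^a$. Raising to the power $q$ and noting $x_0^{aq}=x_0^p$, we get
\[
 x_0^p\le\Bigl(\sum_{i=1}^k x_i^a+\rho\Bigr)^q .
\]
Only the magnitudes enter after this step, so the signs $\sigma_i$ play no further role.

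Next I would split off the residual using the elementary inequality
\[
 (s+t)^q\le(1+\varepsilon)s^q+C_p\varepsilon^{-(q-1)}t^q,
 \qquad s,t\ge0,\ 0<\varepsilon\le1 .
\]
It follows from convexity: write $s+t=\lambda(s/\lambda)+(1-\lambda)(t/(1-\lambda))$ with $\lambda^{1-q}=1+\varepsilon$. This gives $1-\lambda\asymp_p\varepsilon$ for $\varepsilon\le1$, so $(1-\lambda)^{1-q}\le C_p\varepsilon^{1-q}$. Since $q-1=1/a$, this is precisely the factor $\varepsilon^{-1/a}$ in the statement, with $s=\sum_{i\ge1}x_i^a$ and $t=\rho$.

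It remains to bound $\bigl(\sum_{i=1}^k x_i^a\bigr)^q$ by $\sum_i\alpha_i^{-1/a}x_i^p$. Write $x_i^a=\alpha_i\cdot(\alpha_i^{-1}x_i^a)$ and apply Jensen's inequality for the convex map $y\mapsto y^q$ with the probability weights $\alpha_i$:
\[
 \Bigl(\sum_i\alpha_i\,\alpha_i^{-1}x_i^a\Bigr)^q
 \le\sum_i\alpha_i\,\alpha_i^{-q}x_i^{p}
 =\sum_i\alpha_i^{1-q}x_i^p ,
\]
and $1-q=-1/a$. Combining the three displays yields \eqref{eq:tree-power-residual}.

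I do not expect a genuine obstacle. The only points needing care are checking that the exponent bookkeeping is consistent ($aq=p$ and $q-1=1/a$) and that the constant in the splitting inequality depends only on $p$ uniformly for $0<\varepsilon\le1$. The latter follows because $\lambda=(1+\varepsilon)^{-1/(q-1)}$ satisfies $1-\lambda\ge c_p\varepsilon$ on that range.
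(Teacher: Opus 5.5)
Your proof is correct and follows essentially the paper's route. You use the triangle inequality to get $x_0^a\le\sum_{i\ge1}x_i^a+\rho$, then the Young-type splitting $(s+t)^q\le(1+\varepsilon)s^q+C_p\varepsilon^{-1/a}t^q$, then weighted H\"older/Jensen to get $(\sum_{i\ge1}x_i^a)^q\le\sum_i\alpha_i^{-1/a}x_i^p$. The paper instead separates the trivial case $x_0^a\le\sum_{i\ge1}x_i^a$ and applies the splitting with $t=x_0^a-\sum_{i\ge1}x_i^a\le\rho$; your direct bound absorbs that case split. There is one cosmetic slip: the identity you display should read $x_0^a=\sigma_0\sum_{i\ge0}\sigma_ix_i^a-\sigma_0\sum_{i\ge1}\sigma_ix_i^a$, which is what your triangle-inequality step actually uses.
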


\begin{proof}
Put $A=x_0^a$ and $b=\sum_{i=1}^kx_i^a$.  Weighted H\"older, or equivalently
the minimization of $\sum_i\alpha_i^{-1/a}y_i^q$ subject to
$\sum_i y_i=b$, gives
\begin{equation}\label{eq:weighted-child-power}
 b^q\le\sum_{i=1}^k\alpha_i^{-1/a}x_i^p.
\end{equation}
If $A\le b$, this proves the result.  If $A>b$, the triangle inequality gives
$\rho\ge A-b$.  The scalar Young inequality
\[
 (b+s)^q\le(1+\varepsilon)b^q
 +C_q\varepsilon^{-(q-1)}s^q
\]
with $s=A-b$ proves \eqref{eq:tree-power-residual}, because
$q-1=1/a$.
\end{proof}

For superquadratic powers, normalizing by the recursive tree weights gives a
degree-free local estimate.

\begin{lem}[Normalized superquadratic residual]
\label{lem:tree-normalized-superquadratic}
Let $p>2$, put $a=p-1$, and let $w_1,\ldots,w_k\ge1$, where $k\ge1$.  Set
\[
 w=\left(\sum_{i=1}^k(w_i+1)^a\right)^{1/a}.
\]
For signed numbers $a_0,a_1,\ldots,a_k$, define
\[
 \mathfrak D_p(a)=\sum_{j=0}^k|a_j|^p
 -\min_{t\in\R}\sum_{j=0}^k|a_j-t|^p.
\]
Then
\begin{equation}\label{eq:tree-normalized-superquadratic}
 \frac{|a_0|^p}{w}
 \le \sum_{i=1}^k\frac{|a_i|^p}{w_i}
 +C_p\mathfrak D_p(a).
\end{equation}
\end{lem}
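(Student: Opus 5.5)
The plan is to compare everything with the optimal shift. Let $t$ be the minimizer in $\mathfrak D_p(a)$ and put $b_j=a_j-t$, so that $\sum_{j=0}^k\varphi_p(b_j)=0$. Then $\mathfrak D_p(a)=\cD_p$ for a single star-shaped vertex with incident differences $a_j$ and shift $g=t$, and it is exactly the situation of Lemma~\ref{lem:local-drop-comparison}. Its proof, through \eqref{eq:drop-gB} and \eqref{eq:B-g-pge2}, gives for $p>2$
\[
 \mathfrak D_p(a)\asymp_p t^2\sum_{j=0}^k|a_j|^{p-2}+(k+1)|t|^p .
\]
The two pieces we will use are
\[
 t^2|a_j|^{p-2}\lesssim_p\mathfrak D_p(a)\quad\text{for each } j,
 \qquad
 |t|^p\lesssim_p\mathfrak D_p(a).
\]

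\emph{Step 1 (exact inequality at the optimum).} Apply Lemma~\ref{lem:tree-power-residual} to $x_j=|b_j|$ with signs $\sigma_j=\operatorname{sgn}b_j$. Then $\rho=0$, and letting $\varepsilon\downarrow0$ removes the factor $1+\varepsilon$. Choose the weights
\[
 \alpha_i=\frac{(w_i+1)^a}{w^a},
\]
which sum to one by the definition of $w$. Since $\alpha_i^{-1/a}=w/(w_i+1)$, this gives
\[
 \frac{|b_0|^p}{w}\le\sum_{i=1}^k\frac{|b_i|^p}{w_i+1}.
\]
The key point is that we gain the slack
\[
 \frac1{w_i}-\frac1{w_i+1}\ge\frac1{2w_i^2}
\]
on the right, which is a degree-free reserve.

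\emph{Step 2 (returning from $b$ to $a$).} We use the scalar expansion
\[
 \bigl||x\pm t|^p-|x|^p\bigr|\le C_p\bigl(|t||x|^{p-1}+|t|^p\bigr).
\]
On the child side, Young's inequality gives
\[
 \frac{|t||a_i|^{p-1}}{w_i+1}\le\frac{|a_i|^p}{8w_i^2}+C_pt^2|a_i|^{p-2}.
\]
The second term is $\lesssim_p\mathfrak D_p(a)$, and $|t|^p/(w_i+1)$ is handled by $|t|^p\lesssim_p\mathfrak D_p(a)$. Hence
\[
 \frac{|b_i|^p}{w_i+1}\le\frac{|a_i|^p}{w_i}-\frac{|a_i|^p}{4w_i^2}+C_p\mathfrak D_p(a).
\]
Summing over $i$ produces a factor $k$ in front of $\mathfrak D_p(a)$. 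This factor must be avoided. To do so, use the sum form $\mathfrak D_p(a)\gtrsim_p t^2\sum_j|a_j|^{p-2}+(k+1)|t|^p$ directly, rather than the termwise bounds; then the summed error terms are absorbed with a single constant $C_p$.

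\emph{Step 3 (the root term, the main obstacle).} We must bound $|a_0|^p/w$ by $|b_0|^p/w$ plus admissible errors. The dangerous term is the linear error $|t||a_0|^{p-1}/w$, which cannot be absorbed by $\mathfrak D_p$ alone. I would use Young's inequality again:
\[
 \frac{|t||a_0|^{p-1}}{w}\le\frac{\eta|a_0|^p}{w^2}+C_\eta t^2|a_0|^{p-2},
\]
and then pay for $|a_0|^p/w^2$ with the slack from Step 2. For this, first reduce to the case $|a_0|\ge K|t|$; otherwise $|a_0|^p\lesssim|t|^p$ and the term is trivial. In that regime $|a_0|\asymp|b_0|$, and the balance $\sum_j\varphi_p(b_j)=0$ gives
\[
 |b_0|^a\le\sum_{i\ge1}|b_i|^a .
\]
Weighted H\"older as in Step 1 then yields
\[
 \frac{|a_0|^p}{w^2}\lesssim\sum_i\frac{|b_i|^p}{w(w_i+1)}\le\sum_i\frac{|b_i|^p}{w_i^2},
\]
using $w\ge w_i+1$. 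Finally, $|b_i|^p\lesssim|a_i|^p+|t|^p$. With $\eta$ small this is absorbed by the negative term $-\sum_i|a_i|^p/(4w_i^2)$, plus $C_p\mathfrak D_p(a)$. Getting these constants to close without any dependence on $k$ is the step I expect to require the most care.
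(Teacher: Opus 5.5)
Your proof is correct and is essentially the paper's argument. Both rest on the same ingredients: the weighted H\"older inequality at the optimal shift with weights $w_i+1$, the degree-free slack $1/(w_i(w_i+1))$, control of the root's quadratic term through $w\ge w_i+1$, and $\mathfrak D_p(a)\asymp_p t^2\sum_j(|a_j|+|t|)^{p-2}$. The only difference is bookkeeping: the paper bundles your termwise first-order expansions and Young inequalities into the single derivative $H'(0)$ of $H(s)=|b_0+s|^p/w-\sum_i|b_i+s|^p/w_i$, controlled by one Cauchy--Schwarz step and exact Bregman remainders. Your constants do close uniformly in $k$, because the leftover $k|t|^p$ is paid for by the $(k+1)|t|^p$ part of $\mathfrak D_p(a)$.
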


\begin{proof}
Let $g$ be the minimizer in the definition of $\mathfrak D_p(a)$ and put
$c_j=a_j-g$.  With $\varphi_p(s)=|s|^{p-2}s$, stationarity gives
\begin{equation}\label{eq:normalized-stationarity}
 \sum_{j=0}^k\varphi_p(c_j)=0.
\end{equation}
Write $y_j=\varphi_p(c_j)$ and $q=p/(p-1)$.  Define
\[
 H(t)=\frac{|c_0+t|^p}{w}
 -\sum_{i=1}^k\frac{|c_i+t|^p}{w_i}.
\]
Then $H(g)$ is the left side of \eqref{eq:tree-normalized-superquadratic}
minus its propagated child term.

By \eqref{eq:normalized-stationarity}, $y_0=-\sum_i y_i$.  Weighted H\"older,
applied with the weights $w_i+1$, gives
\begin{equation}\label{eq:normalized-holder}
 |y_0|^q\le w\sum_{i=1}^k\frac{|y_i|^q}{w_i+1}.
\end{equation}
Consequently, if $h=-H(0)$, then
\begin{equation}\label{eq:normalized-slack}
 h\ge\sum_{i=1}^k
 \frac{|y_i|^q}{w_i(w_i+1)}\ge0.
\end{equation}
This is the useful slack between the weights $w_i+1$ in
\eqref{eq:normalized-holder} and the weights $w_i$ in the conclusion.

Put
\[
 S_0=\sum_{j=0}^k|y_j|^{2-q}
 =\sum_{j=0}^k|c_j|^{p-2}.
\]
Since
\[
 \frac{H'(0)}p=\frac{y_0}{w}-\sum_{i=1}^k\frac{y_i}{w_i},
\]
Cauchy--Schwarz, after factoring each $y_j$ into powers
$|y_j|^{(2-q)/2}$ and $|y_j|^{q/2}$ with its sign, yields
\begin{equation}\label{eq:normalized-derivative}
 \frac{|H'(0)|^2}{p^2}
 \le S_0\left(
 \frac{|y_0|^q}{w^2}+
 \sum_{i=1}^k\frac{|y_i|^q}{w_i^2}
 \right).
\end{equation}
The child sum in parentheses is at most $2h$ by
\eqref{eq:normalized-slack}.  Also, $w\ge w_i+1$ and
\eqref{eq:normalized-holder} imply
\[
 \frac{|y_0|^q}{w^2}
 \le\sum_{i=1}^k\frac{|y_i|^q}{w(w_i+1)}
 \le\sum_{i=1}^k\frac{|y_i|^q}{w_i(w_i+1)}
 \le h.
\]
Thus $|H'(0)|^2\le3p^2hS_0$, and completing the square gives
\begin{equation}\label{eq:normalized-linear-bound}
 H(0)+gH'(0)
 \le-h+\sqrt3p|g|\sqrt{hS_0}
 \le\frac{3p^2}{4}g^2S_0.
\end{equation}

For $s,t\in\R$, let
\[
 R_p(s,t)=|s+t|^p-|s|^p-p\varphi_p(s)t.
\]
The integral Taylor formula
\[
 R_p(s,t)=p(p-1)t^2\int_0^1(1-\theta)
 |s+\theta t|^{p-2}\,d\theta
\]
and an elementary split according to $|t|\le |s|/2$ or
$|t|>|s|/2$ show that
\begin{equation}\label{eq:normalized-bregman}
 c_pt^2(|s|+|t|)^{p-2}
 \le R_p(s,t)
 \le C_pt^2(|s|+|t|)^{p-2}.
\end{equation}
This includes $s=0$, $t=0$, and intervals crossing the origin.  By
\eqref{eq:normalized-stationarity},
\begin{equation}\label{eq:normalized-drop-bregman}
 \mathfrak D_p(a)=\sum_{j=0}^kR_p(c_j,g)
 \ge c_pg^2\sum_{j=0}^k(|c_j|+|g|)^{p-2}.
\end{equation}
Every Bregman remainder is nonnegative, so
\begin{align*}
 H(g)-H(0)-gH'(0)
 &=\frac{R_p(c_0,g)}w-
 \sum_{i=1}^k\frac{R_p(c_i,g)}{w_i}\\
 &\le C_pg^2(|c_0|+|g|)^{p-2}.
\end{align*}
Combining this estimate with \eqref{eq:normalized-linear-bound}, using
$S_0\le\sum_j(|c_j|+|g|)^{p-2}$, and then applying
\eqref{eq:normalized-drop-bregman} proves $H(g)\le C_p\mathfrak D_p(a)$.
If $g=0$, the same conclusion follows directly from $H(0)=-h\le0$.
All zero cases are covered because the exponents above are nonnegative.
\end{proof}

For subquadratic powers a unary vertex needs a sharper dependence on
$\varepsilon$.

\begin{lem}[Two-edge subquadratic residual]
\label{lem:tree-two-edge-residual}
Let $1<p<2$, let $x_0,x_1\ge0$, let
$\sigma_0,\sigma_1\in\{-1,1\}$, and let
$\rho=|\sigma_0x_0^{p-1}+\sigma_1x_1^{p-1}|$.  Then
\begin{equation}\label{eq:tree-two-edge-residual}
 x_0^p\le(1+\varepsilon)x_1^p+C_p\varepsilon^{-1}
 \max\left\{\rho^q,
 \frac{\rho^2}{x_0^{p-2}+x_1^{p-2}}\right\},
 \qquad0<\varepsilon\le1,
\end{equation}
where, if $x_i=0$, we set $x_i^{p-2}=+\infty$ and interpret a finite
nonnegative number divided by $+\infty$ as zero.
\end{lem}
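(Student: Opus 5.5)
The plan is a direct case analysis on the two scalars, with $a=p-1\in(0,1)$ and $q=p/a$. Two trivial reductions come first.

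\textbf{Reductions.}
\begin{itemize}
\item If $x_0\le x_1$, the inequality is immediate since $\varepsilon>0$.
\item If $\sigma_0=\sigma_1$, then $\rho=x_0^a+x_1^a\ge x_0^a$, so $\rho^q\ge x_0^p$ and \eqref{eq:tree-two-edge-residual} holds with $C_p\ge1$ (using $\varepsilon\le1$).
\end{itemize}
So we may assume the signs are opposite and $x_0>x_1\ge0$. Then $\rho=x_0^a-x_1^a>0$, and it suffices to show
\[
 x_0^p-x_1^p\le\varepsilon x_1^p+C_p\varepsilon^{-1}\max\Bigl\{\rho^q,\tfrac{\rho^2}{x_0^{p-2}+x_1^{p-2}}\Bigr\}.
\]

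\textbf{Case $x_1\le x_0/2$.} This includes $x_1=0$, where the convention makes the second term in the maximum vanish. Here $\rho\ge(1-2^{-a})x_0^a$, hence $\rho^q\ge c_p x_0^p\ge c_p(x_0^p-x_1^p)$. The $\rho^q$ term alone suffices, with constant $C_p=c_p^{-1}$ and $\varepsilon^{-1}\ge1$.

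\textbf{Case $x_0/2<x_1<x_0$.} Put $s=x_0-x_1>0$. All quantities are now comparable to powers of $x_0$.
\begin{itemize}
\item The mean value theorem applied to $t\mapsto t^a$ on $[x_1,x_0]\subset[x_0/2,x_0]$ gives $\rho\asymp_p x_0^{p-2}s$, because $a-1=p-2<0$ and the derivative $at^{a-1}$ is within a factor $2^{2-p}$ of $a\,x_0^{p-2}$.
\item Similarly $x_0^p-x_1^p\asymp_p x_0^{p-1}s$.
\item The denominator satisfies $x_0^{p-2}+x_1^{p-2}\asymp_p x_0^{p-2}$.
\end{itemize}
Consequently
\[
 \frac{\rho^2}{x_0^{p-2}+x_1^{p-2}}\asymp_p x_0^{p-2}s^2.
\]
The arithmetic–geometric mean inequality with a parameter $\eta$ gives
\[
 x_0^{p-1}s=\sqrt{x_0^p\cdot x_0^{p-2}s^2}\le\eta x_0^p+\tfrac1{4\eta}x_0^{p-2}s^2.
\]
Choosing $\eta=\varepsilon 2^{-p}/C'_p$ (with $C'_p$ the implied constant in $x_0^p-x_1^p\asymp_p x_0^{p-1}s$) and using $x_0^p\le2^px_1^p$ in this regime yields
\[
 x_0^p-x_1^p\le\varepsilon x_1^p+C_p\varepsilon^{-1}x_0^{p-2}s^2\lesssim_p\varepsilon x_1^p+C_p\varepsilon^{-1}\frac{\rho^2}{x_0^{p-2}+x_1^{p-2}}.
\]

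The only delicate point is this comparable regime. It is where the sharper $\varepsilon^{-1}$ dependence appears, rather than the $\varepsilon^{-1/a}$ of Lemma~\ref{lem:tree-power-residual}: the quadratic term $\rho^2/(x_0^{p-2}+x_1^{p-2})$ and an AM–GM split replace the Young inequality with exponent $q$. The zero conventions enter only at $x_1=0$, which the first case already covers, and at $x_0=0$, which is trivial.
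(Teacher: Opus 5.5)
Your proof is correct and follows essentially the paper's argument. Both dispose of equal signs and $x_0\le x_1$ directly, let $\rho^q$ absorb everything in the far regime, and handle the comparable regime with a first-order expansion plus an AM--GM/Young split that produces the $\varepsilon^{-1}$ dependence. The only difference is that the paper works in the variables $A=x_0^{p-1}$, $b=x_1^{p-1}$ (splitting at $A\ge 2b$) where you work with $x_0,x_1$ (splitting at $x_1\le x_0/2$), which is purely cosmetic.
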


\begin{proof}
Put $A=x_0^{p-1}$ and $b=x_1^{p-1}$; now $q>2$.  If the two signed terms
have the same sign, then $\rho\ge A$ and the first residual term controls
$x_0^p=A^q$.  For opposite signs, $A\le b$ implies $A^q\le b^q$, so the
leading term on the right already suffices.  It remains to consider $A>b$,
in which case $\rho=A-b$.  If $A\ge2b$, then $\rho\ge A/2$, and again
$\rho^q$ suffices.  If $b<A<2b$, Taylor's theorem and Young's inequality give
\[
 A^q\le(1+\varepsilon)b^q
 +C_p\varepsilon^{-1}b^{q-2}(A-b)^2.
\]
In this range
$x_0^{p-2}+x_1^{p-2}\le C_pb^{2-q}$, because $p-2<0$.
This is exactly \eqref{eq:tree-two-edge-residual}.
\end{proof}

\subsection{The relaxation gap on a tree}

\begin{thm}[Tree relaxation gap]\label{thm:tree-gap}
For the fixed exponent $1<p<\infty$,
\begin{equation}\label{eq:tree-gap}
 \lambda_p(G)\ge
 \begin{cases}
 c_p[T_G\log(en)^{1/(p-1)}]^{-1},&1<p<2,\\
 (2T_G)^{-1},&p=2,\\
 c_pT_G^{-1},&p>2.
 \end{cases}
\end{equation}
\end{thm}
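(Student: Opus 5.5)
The plan is a bottom-up propagation of edge energies, rooted at an optimal vertex. Fix a root $r$ with $T_G(r)=T_G$. Write $w=w_r$ and $a=p-1$. For each $v\ne r$, let $e_v$ be the edge from $v$ to its parent and $x_v=|\nabla_{e_v}f|$, so $\cE_p(f)=\sum_{v\ne r}x_v^p$.

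The residual $B_f(v)$ is a signed sum of $x_v^a$ and the $x_z^a$ over the children $z$ of $v$. So the local scalar lemmas of this section should give a one-step recursion of the form
\begin{equation}\label{eq:plan-tree-recursion}
 \frac{x_v^p}{w(v)}\le\sum_{z\text{ child of }v}\frac{x_z^p}{w(z)}+K_p\,\cD_p(f,v),
\end{equation}
where $K_p$ depends only on $p$ (and on $\log(en)$ when $p<2$). At a leaf $v$ the residual is $|B_f(v)|=x_v^a$ with $d=1$. Lemma~\ref{lem:local-drop-comparison} then gives $x_v^p=|B_f(v)|^q\le C_p\cD_p(f,v)$, which is \eqref{eq:plan-tree-recursion} with $w(v)=1$ and no children.

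Unrolling \eqref{eq:plan-tree-recursion} gives $x_v^p\le K_p\,w(v)\sum_{u\in\mathrm{sub}(v)}\cD_p(f,u)$, where $\mathrm{sub}(v)$ is the subtree of $v$. Summing over $v\ne r$ and exchanging the order of summation,
\[
 \cE_p(f)\le K_p\sum_{u}\cD_p(f,u)\sum_{\substack{v\in\operatorname{Path}(u,r)\\ v\ne r}}w(v)\le K_pT_G\sum_u\cD_p(f,u).
\]
This is exactly the form of \eqref{eq:tree-gap}. The root never contributes an edge term, so it needs no separate treatment.

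For $p>2$, \eqref{eq:plan-tree-recursion} is Lemma~\ref{lem:tree-normalized-superquadratic} applied with $a_0=\pm x_v$ and $a_i=\pm x_{z_i}$ (differences taken from $f(v)$). With these inputs $\mathfrak D_p(a)=\cD_p(f,v)$, and the lemma is degree-free, so $K_p=C_p$. For $p=2$ the same computation should be exact. In Lemma~\ref{lem:tree-normalized-superquadratic} the correction terms vanish when $p=2$, and the drop equals $B^2/\deg$ exactly. Cauchy--Schwarz with weights $w(z)+1$ then gives the recursion with constant $1$ or $2$, and hence $(2T_G)^{-1}$.

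The case $1<p<2$ is the main obstacle, because Lemma~\ref{lem:tree-power-residual} carries a factor $(1+\varepsilon)$ at every level. At a branching vertex I would apply it with $\alpha_i=(w(z_i)+1)^a/w(v)^a$. After dividing by $w(v)$ this yields
\[
 \frac{x_v^p}{w(v)}\le(1+\varepsilon)\sum_i\frac{x_{z_i}^p}{w(z_i)+1}+C_p\varepsilon^{-1/a}\frac{\rho^q}{w(v)}.
\]
The slack $w(z)/(w(z)+1)$ can absorb the factor $(1+\varepsilon)$ when $\varepsilon\lesssim 1/w(z)$. Otherwise I would take $\varepsilon\asymp1/\log(en)$ uniformly, so that the product of the $(1+\varepsilon)$ factors along any root path of branching vertices stays bounded. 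The resulting cost $\varepsilon^{-1/a}=\log(en)^{1/(p-1)}$ is exactly the loss in \eqref{eq:tree-gap}.

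The factor $\rho^q$ must then be converted into a drop. By Lemma~\ref{lem:local-drop-comparison}, $\rho^q\le C_pd^{1/(p-1)}\cD_p(f,v)$. The factor $d^{1/(p-1)}$ should be absorbed by $w(v)^{-1}$, since $w(v)\ge(2^a d)^{1/a}\gtrsim d^{1/(p-1)}$. Unary vertices lie along long paths, where a factor $1+\varepsilon$ per vertex would accumulate. There I would instead use Lemma~\ref{lem:tree-two-edge-residual}, whose $\rho^2/A_f$ term is controlled by the $B^2/A_f$ branch of Lemma~\ref{lem:local-drop-comparison}, choosing $\varepsilon\asymp1/w(v)$. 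In that case $w(v)=w(z)+1$, so the slack $w(z)/(w(z)+1)$ cancels the factor $(1+\varepsilon)$ exactly. The cost $\varepsilon^{-1}\asymp w(v)$ is offset by the division by $w(v)$.

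I expect this $\varepsilon$-bookkeeping to be the delicate step. It has to keep the product of per-level factors bounded and lose at most $\log(en)^{1/(p-1)}$ in total.
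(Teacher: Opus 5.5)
\textbf{Where the proposal works.} Your argument for $p>2$ is the paper's: the normalized superquadratic lemma, induction from the leaves, then reversal of the subtree sums.

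Your direct $p=2$ recursion is also sound, though the paper argues differently: it bounds $\Phi_2(G)\le 2T_G$ by a Diaconis--Stroock path argument and then uses $\lambda_2=\mu_1$. To check your route, put $s=\sum_{j=0}^k a_j$ and write $a_0=\sum_{i=1}^k(s/k-a_i)$. Cauchy--Schwarz with weights $w(z_i)+1$, followed by Young's inequality with parameter $1/w(z_i)$, gives $a_0^2/w(v)\le\sum_i a_i^2/w(z_i)+s^2/k$. Since $s^2/k=\frac{k+1}{k}\cD_2(f,v)\le 2\cD_2(f,v)$, this yields $(2T_G)^{-1}$.

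\textbf{The gap.} It lies at branching vertices when $1<p<2$.

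\emph{Uniform $\varepsilon$ fails.} A choice $\varepsilon\asymp1/\log(en)$ keeps $\prod(1+\varepsilon)$ bounded only if every root path meets $O(\log n)$ branching vertices, and this is false. Take a caterpillar: a spine of $m\asymp n/2$ vertices, each carrying one pendant leaf. Whatever root you choose, some root path passes through order $n$ branching spine vertices. Your iterated bound then picks up a factor $(1+c/\log(en))^{cn}$. The spine ratios $w(z)/(w(z)+1)$ cannot compensate. Indeed $z_i=w(v_i)^{p-1}$ satisfies $z_{i+1}+2^{p-1}\le z_i\le z_{i+1}+1+2^{p-1}$, so $w(v_i)\asymp(m-i)^{1/(p-1)}$ and $\sum_i 1/w(v_i)<\infty$.

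\emph{The slack option fails too.} Absorbing $(1+\varepsilon)$ into the slack of the heavy spine child requires $\varepsilon\le1/w(z)$. That costs $\varepsilon^{-1/(p-1)}\ge w(z)^{1/(p-1)}$. Since $w(v)\approx w(z)$ there, the residual coefficient in your normalized recursion is of order $w^{1/(p-1)-1}$. This is polynomially large, because $1/(p-1)>1$. So neither of your two options closes the recursion, and the claimed $\log(en)^{1/(p-1)}$ loss does not follow.

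\textbf{The missing idea.} Use a vertex-adapted parameter at branching vertices:
\[
\varepsilon_u=\kappa_p\deg(u)/(w(u)^{p-1}\log(en)).
\]

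\emph{Cost per vertex.} Combine Lemma~\ref{lem:tree-power-residual} with $\rho^q\le C_p\deg(u)^{1/(p-1)}\cD_p(f,u)$. The residual coefficient becomes $C_p\varepsilon_u^{-1/(p-1)}\deg(u)^{1/(p-1)}=C_p\kappa_p^{-1/(p-1)}w(u)\log(en)^{1/(p-1)}$. This is exactly the target loss at every branching vertex.

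\emph{The product stays bounded.} Set $z_v=w(v)^{p-1}$. Three facts hold:
\begin{enumerate}[label=(\roman*)]
\item $z$ increases toward the root.
\item A branching vertex $u$ with continuing child $y$ has $z_u-z_y\ge2^{p-1}(k-1)\gtrsim\deg(u)$.
\item $1\le z_v\le 2n$, because $(x+1)^{p-1}\le x^{p-1}+1$.
\end{enumerate}
Hence along any path $\sum_u\varepsilon_u\lesssim\frac{\kappa_p}{\log(en)}\sum_u\log(z_u/z_y)\le C_p\kappa_p$, which is at most $1/2$ for small $\kappa_p$. The branching product is then at most $e^{1/2}$.

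\emph{Iterating.} You must iterate with path products rather than the factor-free recursion you unroll for $p\ge2$. Combine the $e^{1/2}$ branching bound with your exact unary cancellation, and bound the branching ratios $w(u)/(w(u)+1)$ by one. The transfer coefficients then satisfy $\Gamma(s,v)\le e^{1/2}(w(s)+1)/(w(v)+1)$. The final reversal of sums goes through with the stated factor $\log(en)^{1/(p-1)}$.
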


\begin{proof}
Choose a root $r$ attaining the minimum in \eqref{eq:T-root}.  We first treat
$p>2$.  For a nonroot vertex $v$, let $T_v$ be its rooted subtree and put
\[
 Q_v=\sum_{u\in T_v}\cD_p(f,u).
\]
We claim, by induction upward from the leaves, that
\begin{equation}\label{eq:tree-superquadratic-induction}
 \frac{|f(v)-f(\operatorname{parent}v)|^p}{w_r(v)}\le C_pQ_v.
\end{equation}
At a leaf, $w_r(v)=1$, and updating $v$ deletes its sole incident-edge
energy, so \eqref{eq:tree-superquadratic-induction} holds.  If $v$ is
internal, apply Lemma~\ref{lem:tree-normalized-superquadratic} with
\[
 a_0=f(v)-f(\operatorname{parent}v),\qquad
 a_i=f(v)-f(z_i),\qquad w_i=w_r(z_i),
\]
where $z_1,\ldots,z_k$ are the children of $v$.  The recursion
\eqref{eq:tree-weight} supplies the required value $w=w_r(v)$, and the local
quantity $\mathfrak D_p(a)$ is exactly $\cD_p(f,v)$.  The induction hypothesis
at the children therefore proves \eqref{eq:tree-superquadratic-induction}.

Multiplying \eqref{eq:tree-superquadratic-induction} by $w_r(v)$, summing over
the nonroot vertices, and reversing the subtree sums gives
\begin{align*}
 \cE_p(f)
 &\le C_p\sum_{v\ne r}w_r(v)Q_v\\
 &=C_p\sum_{u\in V}\cD_p(f,u)
 \sum_{\substack{v\in\operatorname{Path}(u,r)\\v\ne r}}w_r(v)
 \le C_pT_G\sum_{u\in V}\cD_p(f,u).
\end{align*}
No local estimate at the root is needed; adding its nonnegative drop only
enlarges the final sum.  This proves the third line of \eqref{eq:tree-gap}.

Now let $1<p<2$, write $a=p-1$, and put $L_n=\log(en)$.  For a nonroot
vertex $u$, let $x_0(u)$ be the magnitude of the increment on its parent edge,
let $x_z(u)$ be the magnitude on the edge to a child $z$, and put
$\rho_u=|B_f(u)|$.  If $u$ has children, define
\begin{equation}\label{eq:tree-alpha}
 \alpha_{u,z}=\left(\frac{w_r(z)+1}{w_r(u)}\right)^{p-1}.
\end{equation}
The recursion \eqref{eq:tree-weight} says that these numbers sum to one.  At a
leaf put $\varepsilon_u=0$.  Since $w_r(u)=1$ and updating $u$ removes its
only incident-edge energy, $\cD_p(f,u)=x_0(u)^p$; thus the local estimate
below holds directly, with an empty child sum.  At a unary internal vertex
put $\varepsilon_u=1/w_r(u)$ and use
Lemma~\ref{lem:tree-two-edge-residual}.  At a branching vertex put
\begin{equation}\label{eq:tree-branch-epsilon}
 \varepsilon_u=\kappa_p\frac{\deg(u)}{w_r(u)^aL_n},
\end{equation}
where $\kappa_p>0$ will be fixed below.

We first record the branch increment that makes this choice admissible.  Set
$z_v=w_r(v)^a$.  If $u$ has $k\ge2$ children and $y$ is any one of them,
then $\deg(u)=k+1$ and every side child has weight at least one.  Therefore
\begin{align}\label{eq:branch-increment}
 z_u-z_y
 &= (w_r(y)+1)^a-w_r(y)^a
   +\sum_{\substack{z\text{ child of }u\\z\ne y}}(w_r(z)+1)^a\notag\\
 &\ge 2^a(k-1),
 \qquad
 \deg(u)\le 3\cdot2^{-a}(z_u-z_y).
\end{align}
In particular, $\deg(u)\le C_pz_u$.  Since the tree is nontrivial,
$n\ge2$ and $L_n\ge1$; choosing $\kappa_p$ sufficiently small therefore
ensures $0<\varepsilon_u\le1$ at every branching vertex.

Use Lemma~\ref{lem:tree-power-residual} at branching vertices.  Together
with Lemma~\ref{lem:tree-two-edge-residual} at unary vertices, the first line
of Lemma~\ref{lem:local-drop-comparison}, and the preceding leaf observation,
the three vertex types give
\begin{equation}\label{eq:tree-local-subquadratic}
 x_0(u)^p\le
 (1+\varepsilon_u)\sum_{z\text{ child}}
 \frac{w_r(u)}{w_r(z)+1}x_z(u)^p
 +C_pw_r(u)L_n^{1/a}\cD_p(f,u).
\end{equation}
Indeed, at a branching vertex the residual coefficient is
\[
 C_p\deg(u)^{1/a}\varepsilon_u^{-1/a}
 =C_p\kappa_p^{-1/a}w_r(u)L_n^{1/a},
\]
whereas at a unary vertex the sharper lemma gives $C_pw_r(u)$, which is
absorbed because $L_n\ge1$.  At a leaf the assertion is the direct identity
recorded above.

We make the iteration of \eqref{eq:tree-local-subquadratic} explicit.  For
$u\ne r$, set
\[
 e_u=x_0(u)^p,\qquad
 R_u=C_pL_n^{1/a}w_r(u)\cD_p(f,u),
\]
and, whenever $z$ is a child of $u$, set
\begin{equation}\label{eq:tree-edge-transfer}
 A_{u,z}=(1+\varepsilon_u)\frac{w_r(u)}{w_r(z)+1}.
\end{equation}
Since $x_z(u)=x_0(z)$, after choosing one uniform value of $C_p$ the local
estimate is the recursion
\begin{equation}\label{eq:tree-energy-recursion}
 e_u\le R_u+\sum_{z\text{ child of }u}A_{u,z}e_z.
\end{equation}
For $v$ in the rooted subtree $T_s$, define the transfer coefficient
$\Gamma(s,v)$ by
\[
 \Gamma(s,s)=1,\qquad
 \Gamma(s,v)=A_{s,z}\Gamma(z,v),
\]
where $z$ is the first vertex after $s$ on the path from $s$ to $v$.
Induction on the height of $T_s$ now gives the precise iterated inequality
\begin{equation}\label{eq:tree-energy-iteration}
 e_s\le\sum_{v\in T_s}\Gamma(s,v)R_v.
\end{equation}
Indeed, this is \eqref{eq:tree-energy-recursion} when $s$ is a leaf; at an
internal vertex, substituting the induction hypothesis for every child gives
\[
 e_s\le R_s+\sum_{z\text{ child of }s}A_{s,z}
       \sum_{v\in T_z}\Gamma(z,v)R_v
     =\sum_{v\in T_s}\Gamma(s,v)R_v,
\]
by the defining recursion for $\Gamma$.  Thus every occurrence of
$\cD_p(f,v)$ has acquired exactly the transfers along the unique path from
its ancestor $s$ to $v$.

It remains to show that these path products are uniformly bounded.  First
note that
$(x+1)^a\le x^a+1$.  An induction on the size $N_v$ of the rooted subtree
below $v$ consequently gives
\begin{equation}\label{eq:tree-z-size}
 z_v\le2N_v-1\le2n.
\end{equation}
Now fix a segment of an ancestor path containing only nonroot vertices and,
at each branching vertex $u$ on it, let $y$ be the child that continues along
the path.  The values of $z$ strictly increase
when the path is followed toward the root.  Moreover, the intervals
$[z_y,z_u]$ belonging to successive branch points are ordered and have
disjoint interiors:
between two such points the path is unary, and at every unary step
$w_r(\operatorname{parent})=w_r(\operatorname{child})+1$.
Using \eqref{eq:branch-increment} and
$(z_u-z_y)/z_u\le\log(z_u/z_y)$, we obtain
\begin{align}\label{eq:branch-integral}
 \sum_{u\text{ branching on the path}}\varepsilon_u
 &\le \frac{C_p\kappa_p}{L_n}
 \sum_{u\text{ branching on the path}}\frac{z_u-z_y}{z_u}\notag\\
 &\le \frac{C_p\kappa_p}{L_n}
 \sum_{u\text{ branching on the path}}\log\frac{z_u}{z_y}
 \le \frac{C_p\kappa_p\log(2n)}{L_n}
 \le C_p\kappa_p.
\end{align}
Make $\kappa_p$ smaller, if necessary, so that the last quantity is at most
$1/2$.  This choice simultaneously retains $\varepsilon_u\le1$ and, using
$1+x\le e^x$, gives the branch-only product bound
\begin{equation}\label{eq:tree-branch-product}
 \prod_{u\text{ branching on the path}}(1+\varepsilon_u)\le e^{1/2}.
\end{equation}
Let $s$ be a proper ancestor of $v$, and write the ancestor--descendant path
as $s=u_0,u_1,\ldots,u_m=v$, where $m\ge1$.  The recursive definition of
$\Gamma$, together with \eqref{eq:tree-edge-transfer}, gives
\begin{align}\label{eq:tree-sub-product}
 \Gamma(s,v)
 &=\prod_{i=0}^{m-1}A_{u_i,u_{i+1}}\notag\\
 &=\frac{w_r(s)}{w_r(v)+1}
 \prod_{i=1}^{m-1}\frac{w_r(u_i)}{w_r(u_i)+1}
 \prod_{i=0}^{m-1}(1+\varepsilon_{u_i}).
\end{align}
For every internal unary vertex $u_i$, $1\le i<m$, the recursion gives
$w_r(u_i)=w_r(u_{i+1})+1$, and its ratio and perturbation factor cancel:
\[
 \frac{w_r(u_i)}{w_r(u_i)+1}
 \left(1+\frac1{w_r(u_i)}\right)=1.
\]
If the starting vertex $s$ is unary, its perturbation factor is instead
absorbed at the left endpoint:
\[
 w_r(s)\left(1+\frac1{w_r(s)}\right)=w_r(s)+1.
\]
If $s$ is branching, simply use $w_r(s)\le w_r(s)+1$.  All uncancelled
internal ratios are at most one, and
\eqref{eq:tree-branch-product} controls precisely the remaining branching
factors.  Hence
\begin{equation}\label{eq:tree-transfer-bound}
 \Gamma(s,v)\le
 e^{1/2}\frac{w_r(s)+1}{w_r(v)+1}.
\end{equation}

The diagonal coefficient is $\Gamma(v,v)=1$.  Summing
\eqref{eq:tree-energy-iteration} over all nonroot ancestors $s$ and reversing
the finite sums now makes the accumulation completely explicit:
\begin{align}
 \cE_p(f)=\sum_{s\ne r}e_s
 &\le C_pL_n^{1/a}
 \sum_{s\ne r}\sum_{v\in T_s}
 \Gamma(s,v)w_r(v)\cD_p(f,v)\notag\\
 &=C_pL_n^{1/a}\sum_{v\ne r}w_r(v)\cD_p(f,v)
 \sum_{\substack{s\in\operatorname{Path}(v,r)\\s\ne r}}
 \Gamma(s,v).\label{eq:tree-reversed-iteration}
\end{align}
For a fixed $v$, the diagonal term and \eqref{eq:tree-transfer-bound} give
\begin{align*}
 w_r(v)\sum_{\substack{s\in\operatorname{Path}(v,r)\\s\ne r}}
 \Gamma(s,v)
 &\le w_r(v)+e^{1/2}
 \sum_{\substack{s\in\operatorname{Path}(v,r)\\s\ne r,\ s\ne v}}
 (w_r(s)+1)\\
 &\le e^{1/2}
 \sum_{\substack{s\in\operatorname{Path}(v,r)\\s\ne r}}(w_r(s)+1)
 \le 2e^{1/2}T_G,
\end{align*}
where the ratio $w_r(v)/(w_r(v)+1)$ was bounded by one and every tree
weight is at least one.  Applying this bound in
\eqref{eq:tree-reversed-iteration}, and then adding the nonnegative root
drop, proves the first line of \eqref{eq:tree-gap}.  Notice that
the finite-size estimate \eqref{eq:branch-integral}, rather than a pointwise
comparison with one side branch, is what controls the full branch product.

Finally take $p=2$ and choose a root attaining $T_G$.  Weighted
Cauchy--Schwarz along the unique path from $u$ to $r$, followed by summation
in $u$, gives the path estimate of Diaconis and
Stroock~\cite{diaconis1991geometric} in the explicit form
\begin{align}\label{eq:tree-quadratic-path}
 \sum_{u\in V}\deg(u)|f(u)-f(r)|^2
 &\le T_G\sum_{v\ne r}
 \frac{\operatorname{vol}(T_v)}{w_r(v)}
 |f(v)-f(\operatorname{parent}v)|^2,
\end{align}
where $\operatorname{vol}(T_v)=\sum_{u\in T_v}\deg(u)$.  Indeed, the first
Cauchy--Schwarz factor is at most $T_G$, and reversing the resulting path
sums makes the coefficient of the edge above $v$ equal to
$\operatorname{vol}(T_v)/w_r(v)$.

If $s_v=|T_v|$, then $\operatorname{vol}(T_v)=2s_v-1\le2s_v$.  The
$p=2$ recursion also gives $w_r(v)\ge s_v$: this is equality at a leaf, and
if $v$ has $k\ge1$ children, induction gives
\[
 w_r(v)=\sum_{z\text{ child of }v}(w_r(z)+1)
 \ge\sum_z(s_z+1)=s_v+k-1\ge s_v.
\]
Consequently \eqref{eq:tree-quadratic-path} is at most
$2T_G\cE_2(f)$.  Taking the infimum over constants in the numerator shows
$\Phi_2(G)\le2T_G$, hence $\mu_1(G)=\Phi_2(G)^{-1}\ge(2T_G)^{-1}$.
The quadratic identity $\lambda_2(G)=\mu_1(G)$ recorded in
Section~\ref{sec:upper} proves the middle line of \eqref{eq:tree-gap}.
\end{proof}

\begin{cor}[Tree consensus upper bound]\label{cor:tree-consensus-upper}
For $f_0\in[-1,1]^V$ and $0<\epsilon\le1$,
\begin{equation}\label{eq:tree-consensus-upper}
 \E\tau_p(\epsilon;f_0)\le C_pnT_G\log(n/\epsilon)
 \begin{cases}
  \log(en)^{1/(p-1)},&1<p<2,\\
  1,&p\ge2.
 \end{cases}
\end{equation}
\end{cor}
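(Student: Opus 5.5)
The corollary follows by inserting the gap bounds of Theorem~\ref{thm:tree-gap} into the variational bound of Theorem~\ref{thm:variational-upper}. For $f_0\in[-1,1]^V$ and $0<\epsilon\le1$, Theorem~\ref{thm:variational-upper} gives
\[
 \E\tau_p(\epsilon;f_0)\le\frac{n}{\lambda_p(G)}\left[\log\!\left(\frac{2^p|E|D^{p-1}}{\epsilon^p}\right)+3\right].
\]

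On a tree, $|E|=n-1$ and $D\le n-1$. The logarithm is therefore at most
\[
 p\log2+p\log n+p\log(1/\epsilon)+3 .
\]

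We next check that this is at most $C_p\log(n/\epsilon)$. Since $n\ge2$ and $\epsilon\le1$, we have $n/\epsilon\ge2$, so $\log(n/\epsilon)\ge\log2$. Hence every additive constant is absorbed, and the bracket is at most $C_p\log(n/\epsilon)$.

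It remains to bound $1/\lambda_p(G)$, using the three cases of \eqref{eq:tree-gap}.
\begin{itemize}
\item For $p>2$, $1/\lambda_p(G)\le C_pT_G$.
\item For $p=2$, $1/\lambda_2(G)\le 2T_G$.
\item For $1<p<2$, $1/\lambda_p(G)\le C_pT_G\log(en)^{1/(p-1)}$.
\end{itemize}
Multiplying each case by $n$ and by the bracket bound gives \eqref{eq:tree-consensus-upper}, with the factor $1$ for $p\ge2$ and the extra $\log(en)^{1/(p-1)}$ for $1<p<2$.

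There is no real obstacle here. The only care needed is absorbing the additive constant $3$ and the $\log 2^p$ term into $\log(n/\epsilon)$, which uses $n/\epsilon\ge2$. Also, $T_G\ge1$ because weights are at least one, but this is not even needed.
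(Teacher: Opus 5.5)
Your proposal is correct and follows the paper's proof exactly: it inserts Theorem~\ref{thm:tree-gap} into Theorem~\ref{thm:variational-upper} and uses $|E|=n-1$ and $D<n$ on a tree. You also spell out the routine absorption of the additive constants into $\log(n/\epsilon)$ via $n/\epsilon\ge2$, which the paper leaves implicit.
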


\begin{proof}
Insert Theorem~\ref{thm:tree-gap} into
Theorem~\ref{thm:variational-upper}; on a tree, $|E|=n-1$ and
$\diam(G)<n$.
\end{proof}

\subsection{A range-one slow profile}

\begin{thm}[Tree lower bound]\label{thm:tree-lower}
For every $1<p<\infty$, there is $f_0\in[0,1]^V$ of range exactly one such
that
\begin{equation}\label{eq:tree-lower}
 \E\tau_p(1/2;f_0)\ge c_p\frac{nT_G}{\Delta}.
\end{equation}
\end{thm}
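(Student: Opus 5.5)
The plan is to apply Theorem~\ref{thm:propagation-lower} with $R=\osc(f_0)=1$. It then suffices to build a range-one profile whose synchronous displacement satisfies
\[
 \psi=\lVert Sf_0-f_0\rVert_\infty\le \frac{C_p}{T_G},
\]
because \eqref{eq:propagation-lower} then gives $\E\tau_p(1/2;f_0)\ge c\,n/(\Delta\psi)\ge c_p nT_G/\Delta$. The factor $1/\Delta$ comes from the propagation bound itself, so no further degree loss is allowed in the construction.

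The profile is a discrete $p$-potential built from the recursive weights. Fix a root $r$ and a vertex $x$ attaining $T_G(r)\ge T_G$, and set
\[
 f_0(v)=\frac{1}{T}\sum_{\substack{s\in\operatorname{Path}(v,r)\\ s\ne r}}w_r(s),\qquad T=T_G(r).
\]
Then $f_0\in[0,1]$, with $f_0(r)=0$ and $f_0(x)=1$, so the range is exactly one. At a nonroot vertex $v$ with children $z_1,\dots,z_k$, the neighbor increments are $-w_r(v)/T$ toward the parent and $+w_r(z_i)/T$ toward each child. The new value is $f_0(v)-s/T$, where $s$ solves the balance equation
\[
 \varphi_p(w_r(v)-s)=\sum_i\varphi_p(w_r(z_i)+s),
\]
and $\varphi_p(s)=|s|^{p-2}s$ is as before.

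The recursion \eqref{eq:tree-weight} is designed to fix $s$. At $s=0$ the left side is $w_r(v)^{p-1}=\sum_i(w_r(z_i)+1)^{p-1}$, which exceeds the right side. At $s=1$ the left side $(w_r(v)-1)^{p-1}$ is below $\sum_i(w_r(z_i)+1)^{p-1}$, which is at most the right side. Monotonicity therefore gives $0\le s\le1$. At a leaf the mean is the parent value and $s=1$ exactly. Hence every nonroot vertex moves by at most $1/T\le1/T_G$.

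The main obstacle is the root, where all neighbors are children with values $w_r(z)/T$. The $p$-mean there can move by as much as $\max_z w_r(z)/T$, which may be of order one. To handle it, I would use the minimality of $r$. Rerooting at the child $y$ on the path to $x$ cannot decrease $T_G$, so some other branch at $r$ must carry a comparable weighted depth. I would then replace the one-sided potential by a two-sided one. Write $\pm$ on the two heaviest branches and scale the two halves so that the root balance $\sum_z\varphi_p(\pm w_r(z)/T)$ nearly vanishes. The remaining branches stay at the root value $0$; since $0$ lies between the two signed branch values, these extra neighbors pull the root's $p$-mean toward $f_0(r)$ and do not enlarge its displacement.

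If exact balancing is awkward, the fallback is to flatten the profile in a neighborhood of the root. One would build the potential only from a vertex $r'$ on the heavy branch with $\sum_{v\in\operatorname{Path}(x,r'),v\ne r'}w_r(v)\ge T/2$ and set $f_0$ constant above $r'$. This leaves the rescaled range at least $1/2$, and the displacement at $r'$ becomes a single-vertex defect of size at most $\max_z w_r(z)/T$ over the children of $r'$. I would then check that this defect is $O(1/T)$ after the choice of $r'$; in the worst case it can be absorbed by a loss of $\deg(r')\le\Delta$, which is within the bound we are willing to pay. Verifying this root-level estimate, and that the rescaled profile still has range exactly one, is the step I expect to require the most care.
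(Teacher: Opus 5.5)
Your reduction to building a range-one profile with $\psi\lesssim_p 1/T_G$ is correct, and so is your computation at nonroot vertices: the recursion traps the shift of the $p$-mean in $[0,1]$, exactly as in the paper. The gap is at the center. Minimality of $r$ does \emph{not} imply that some second \emph{single} branch at $r$ has weighted depth comparable to $T_G$. The inequality $T_G(y)\ge T_G(r)$ at the heavy child $y$ can hold simply because $w_y(r)=\bigl(\sum_{j}(w_r(y_j)+1)^{p-1}\bigr)^{1/(p-1)}$, summed over the other children $y_j$, aggregates many branches, each of which may be shallow.

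For a concrete tree, take $p=2$. Let $r$ carry one pendant path with $\ell$ vertices and $k$ pendant paths with $m=\lceil\sqrt\ell\,\rceil$ vertices each, where $k$ is minimal with $k(m+1)+m(m+1)/2\ge\ell(\ell+1)/2$. Then:
\begin{itemize}
\item $r$ minimizes $T_G(\cdot)$, with $T_G=\ell(\ell+1)/2$;
\item $\Delta=k+1\asymp\ell^{3/2}$ and $n\asymp\ell^2$;
\item your balanced profile on the long path and one short path has scale factors $m/\ell$ and $1$, so its range is $A=m(\ell+m+2)/2\asymp\ell^{3/2}$ while $\psi=1$ (the short path's leaf moves a full unit).
\end{itemize}
Hence Theorem~\ref{thm:propagation-lower} yields only order $n$, whereas the statement requires order $nT_G/\Delta\asymp n\sqrt\ell$; the coupon bound $n\log n$ does not close this gap. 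The fallback also fails. Flattening above $r'$ leaves a defect at $r'$ of order $w_r(\text{child of }r')/T$. On a path with $2\ell+1$ vertices this is already of order $1/\ell$ rather than $1/\ell^2$, even though $\deg(r')=2$. In any case, an extra factor $\Delta$ is not affordable, as you yourself observed at the start.

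The missing idea is to center the two-sided profile at an \emph{edge} and to use the complementary rerooted weights on the whole of each side. The paper deletes an edge $ab$ and sets $g=-c_b\delta_a$ on the component $C_b$ and $g=-h+c_a\delta_b$ on $C_a$, where $c_bw_a(b)=c_aw_b(a)=h$ and $\max\{c_a,c_b\}=1$. After rescaling, the parent increment at $a$ has magnitude $w_b(a)$, which accounts for all other branches at $a$. So the recursion holds at $a$ and $b$ exactly as at every other vertex, and $\psi\le1$ before normalization. The edge is chosen by moving the root from a leaf toward its selected maximizer until the selected maximizer changes. This guarantees that $C_b$ contains a maximizer of $\delta_a$ and $C_a$ contains a maximizer of $\delta_b$. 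Hence the side with scale factor one alone forces the range to be at least $T_G(a)$ or $T_G(b)$, both of which are $\ge T_G$.

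Your minimality idea does fit this form. If $r$ minimizes $T_G(\cdot)$ and $y$ is its child toward a maximizer of $\delta_r$, then on the subtree of $y$ one has $\delta_y=\delta_r-w_r(y)<T_G(r)\le T_G(y)$. So every maximizer of $\delta_y$ lies on the $r$-side, and $a=r$, $b=y$ is an admissible central edge. This works only if every other branch at $r$ is placed on the negative side with the aggregated weight $w_y(r)$, rather than frozen at zero.
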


\begin{proof}
For a root $r$, abbreviate
\[
 \delta_r(x)=\sum_{v\in\operatorname{Path}(x,r),\,v\ne r}w_r(v).
\]
Every maximizer of $\delta_r$ is a graph-theoretic leaf.  Indeed,
$\delta_r(r)=0$, whereas a neighbor of $r$ has positive value, so a maximizer
$x$ is not the root.  If $x$ had a child $y$, then
$\delta_r(y)=\delta_r(x)+w_r(y)>\delta_r(x)$, a contradiction.  Thus $x$ is
a nonroot vertex with no children and hence has graph degree one.

Fix a total order to break ties among maximizers.  Choose a
graph-theoretic leaf $s$ and let $t$ be the selected maximizer of
$\delta_s$.  Move
the root along the path from $s$ to $t$.  At the root $s$, the selected
maximizer is $t$, whereas at the root $t$ it is not.  Hence there is a first
adjacent pair $a,b$, in this order along the path, for which $t$ is selected
at root $a$ but not at root $b$.  Let $z$ be the selected maximizer at root
$b$.

Remove the edge $ab$, and let $C_b$ and $C_a$ be the components containing
$b$ and $a$.  For $x\in C_b$, the orientations and all weights strictly
below $b$ are unchanged by rerooting across $ab$, and
\[
 \delta_a(x)=w_a(b)+\delta_b(x).
\]
Thus $t$ remains the tie-broken maximizer within $C_b$.  Since it is not the
global selected maximizer for root $b$, the vertex $z$ lies in $C_a$.

Define
\[
 c_b=\min\{1,w_b(a)/w_a(b)\},\qquad
 c_a=\min\{1,w_a(b)/w_b(a)\},
\]
so that $c_bw_a(b)=c_aw_b(a)=:h$.  Construct an unnormalized profile $g$ as
follows.  Set $g(a)=0$ and $g(b)=-h$.  In $C_b$, orient away from $a$ and
decrease by $c_bw_a(v)$ across the edge entering $v$.  In $C_a$, orient away
from $b$ and increase by $c_aw_b(v)$ across the edge entering $v$.  The
definition gives the two central identities
\begin{equation}\label{eq:tree-central-identities}
 \frac{h}{c_a}=w_b(a),\qquad \frac{h}{c_b}=w_a(b).
\end{equation}
More generally, the signs in the two components are explicit:
\[
 g(x)=-c_b\delta_a(x)\quad(x\in C_b),\qquad
 g(x)=-h+c_a\delta_b(x)\quad(x\in C_a).
\]
Every path from $C_b$ to the root $a$ contains $b$, whereas every path from
$C_a$ to the root $b$ contains $a$.  Thus
\[
 g(x)\le-h\quad(x\in C_b),\qquad g(x)\ge0\quad(x\in C_a),
\]
by \eqref{eq:tree-central-identities}.  Since $t$ maximizes $\delta_a$ and
$z$ maximizes $\delta_b$, we have $g(t)\le g(x)$ on $C_b$ and
$g(z)\ge g(x)$ on $C_a$.  The sign separation across the central edge then
shows that $g(t)$ and $g(z)$ are respectively the global minimum and maximum.
Hence the range of $g$ is
\begin{equation}\label{eq:tree-profile-range}
 A=c_b\delta_a(t)+c_a\delta_b(z)-h.
\end{equation}
One of $c_a,c_b$ is one.  Moreover,
\[
 c_b\delta_a(t)\ge c_bw_a(b)=h,
 \qquad c_a\delta_b(z)\ge c_aw_b(a)=h.
\]
If $c_b=1$, the second inequality gives
$A\ge\delta_a(t)=T_G(a)$; if $c_a=1$, the first gives
$A\ge\delta_b(z)=T_G(b)$.  In either case $A\ge T_G$.

Let $f_0=(g-\min g)/A$.  Its range is exactly one.  We claim that
\begin{equation}\label{eq:tree-profile-psi}
 \lVert Sf_0-f_0\rVert_\infty\le A^{-1}\le T_G^{-1}.
\end{equation}
Indeed, translation equivariance and the positive homogeneity of the scalar
$p$-mean from Lemma~\ref{lem:mean-properties} allow us to work with incident
increments and divide out any common positive scale.  First
consider a noncentral vertex in one of the two oriented components and
divide out the common factor $c_a/A$ or $c_b/A$.  Its
parent-edge increment has magnitude
$w(v)$, while its child-edge increments have magnitudes $w(z)$.  If the local
$p$-mean moves toward the parent by $x$, its first-order equation is
\[
 (w(v)-x)^{p-1}=\sum_{z\text{ child}}(w(z)+x)^{p-1}.
\]
At $x=0$ the left side is at least the right, while at $x=1$ the recursion
\eqref{eq:tree-weight} makes it at most the right.  The zero therefore lies
in $[0,1]$.  A leaf moves exactly one unit toward its parent.  At $a$, after
dividing out the factor $c_a/A$, the central-edge magnitude is
$h/c_a=w_b(a)$ and the child-edge magnitudes are the corresponding
$w_b$-weights.  At $b$, after dividing out $c_b/A$, they are
$h/c_b=w_a(b)$ and the corresponding $w_a$-weights.  Thus
\eqref{eq:tree-central-identities} makes the same recursion valid at both
central vertices.  Restoring the scale proves \eqref{eq:tree-profile-psi}.

Theorem~\ref{thm:propagation-lower} now proves \eqref{eq:tree-lower}.
\end{proof}

\begin{cor}[Worst-profile tree lower bound]
\label{cor:tree-worst-lower}
For every $1<p<\infty$,
\begin{equation}\label{eq:tree-worst-lower}
 \mathsf T_p(G,1/2)
 \ge c_p n\max\left\{\log n,\frac{T_G}{\Delta}\right\}.
\end{equation}
The two terms in the maximum need not be realized by the same profile.
\end{cor}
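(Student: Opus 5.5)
The plan is to combine the two lower-bound mechanisms already proved, each realized by its own profile, and then pass to the supremum in the definition of $\mathsf T_p(G,1/2)$. For the $\log n$ term we use the coupon observation from Section~\ref{sec:lower}. With $\epsilon=1/2<1$ it supplies a profile $f_0=\one_A\in\{0,1\}^V\subseteq[-1,1]^V$ satisfying $\E\tau_p(1/2;f_0)\ge\frac n4\log n$. This holds for every $p\in(1,\infty]$, so in particular for the fixed finite $p>1$.

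For the $T_G/\Delta$ term we invoke Theorem~\ref{thm:tree-lower}. It gives a profile $f_0\in[0,1]^V$ of range one with $\E\tau_p(1/2;f_0)\ge c_pnT_G/\Delta$. Since $[0,1]^V\subseteq[-1,1]^V$, this profile is admissible in the supremum defining $\mathsf T_p(G,1/2)$. No rescaling is needed, and the tolerance $1/2$ is exactly the one used in that theorem.

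Taking the supremum over admissible profiles gives
\[
 \mathsf T_p(G,1/2)\ge\max\Bigl\{\frac n4\log n,\ c_p\frac{nT_G}{\Delta}\Bigr\}\ge \min\{1/4,c_p\}\,n\max\Bigl\{\log n,\frac{T_G}{\Delta}\Bigr\}.
\]
After renaming the constant, this is \eqref{eq:tree-worst-lower}. The two terms come from different initial profiles, and that is harmless because each profile separately lower-bounds the supremum.

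There is no real obstacle. The only points to check are bookkeeping: both profiles lie in $[-1,1]^V$, the coupon observation needs $\epsilon<1$ (satisfied by $1/2$), and the final constant depends only on $p$.
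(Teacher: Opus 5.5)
Your proposal is correct and follows the paper's proof: combine the coupon observation with Theorem~\ref{thm:tree-lower}, note that both profiles lie in $[0,1]^V\subseteq[-1,1]^V$, and take the supremum. Your step $\max\{\alpha x,\beta y\}\ge\min\{\alpha,\beta\}\max\{x,y\}$ for absorbing the constants is an equally valid substitute for the paper's $\max\{a,b\}\ge(a+b)/2$.
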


\begin{proof}
Combine the coupon observation in Section~\ref{sec:lower} with
Theorem~\ref{thm:tree-lower}.  Both profiles lie in $[0,1]^V$, so their lower
bounds also apply to the globally defined supremum over $[-1,1]^V$.  Use
$\max\{a,b\}\ge(a+b)/2$ to absorb the two absolute constants.
\end{proof}

\subsection{Spherically symmetric trees}

Consider layers $0,1,\ldots,k$.  Assume $d_i\ge1$ for every $0\le i<k$,
so all displayed layers are nonempty.  Every vertex in layer $i$ has $d_i$
children, and $o$ denotes the layer-zero root.

\begin{prop}[Symmetric-tree geometry]
\label{prop:symmetric-tree-geometry}
Assume $d_0\ge2$.  Then $o$ minimizes $T_G(r)$.  If
\begin{equation}\label{eq:symmetric-S}
 S=\sum_{1\le i\le j\le k-1}
 (d_i d_{i+1}\cdots d_j)^{1/(p-1)},
\end{equation}
where the sum is empty for $k=1$, then
\begin{equation}\label{eq:symmetric-T-comparison}
 1+S\le T_G\le1+2S.
\end{equation}
The actual maximum degree is
\begin{equation}\label{eq:symmetric-degree}
 \Delta_G=\max\left\{d_0,\ 1+\max_{1\le i<k}d_i\right\},
\end{equation}
with the second maximum omitted for $k=1$.
\end{prop}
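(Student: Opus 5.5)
The plan is to compute everything explicitly at the natural root $o$, and then show that no other root can do better. Write $a=p-1$. Under the root $o$, every vertex in layer $i$ has the same weight, which I call $w_i$, because the tree is spherically symmetric.

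First, the recursion. The leaves in layer $k$ have $w_k=1$. For $1\le i\le k-1$, the recursion \eqref{eq:tree-weight} has $d_i$ identical summands, so
\[
 w_i=\bigl(d_i(w_{i+1}+1)^{a}\bigr)^{1/a}=d_i^{1/a}(w_{i+1}+1).
\]
Unrolling this from $w_{k-1}=2d_{k-1}^{1/a}$ gives
\[
 w_i=\sum_{j=i}^{k-1}(d_i\cdots d_j)^{1/a}+(d_i\cdots d_{k-1})^{1/a}.
\]
This should be checked by a short induction on $k-i$.

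Second, the value of $T_G(o)$. All root-to-leaf paths have the same weight profile, so
\[
 T_G(o)=\sum_{i=1}^{k}w_i=1+S+\sum_{i=1}^{k-1}(d_i\cdots d_{k-1})^{1/a}.
\]
The last sum is nonnegative. It is also at most $S$, because its terms are exactly the $j=k-1$ terms of \eqref{eq:symmetric-S}. This yields $1+S\le T_G(o)\le 1+2S$. For $k=1$ every sum is empty and $T_G(o)=w_1=1$, which matches.

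Third, minimality of $o$; this is the step needing care. Take any root $r\ne o$, and let $o'$ be the child of $o$ whose subtree contains $r$. Since $d_0\ge2$, there is another child $o''$ of $o$. Choose a layer-$k$ leaf $x$ below $o''$. The path from $x$ to $r$ passes through $o$.

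For each vertex $v\ne o$ on the segment from $x$ up to $o$, the children of $v$ under root $r$ are the same as under root $o$. Hence its whole descendant subtree is unchanged, and so $w_r(v)=w_o(v)=w_{\mathrm{layer}(v)}$. Moreover $o$ itself is now a nonroot vertex on this path, with $w_r(o)\ge1>0$. Since all weights are positive, this gives
\[
 T_G(r)\ge\delta_r(x)\ge\sum_{i=1}^{k}w_i+w_r(o)>T_G(o).
\]
So $o$ minimizes $T_G(r)$, $T_G=T_G(o)$, and \eqref{eq:symmetric-T-comparison} follows from the second step. The one point to verify carefully is the claim that the weights on the segment below $o$ are unchanged by rerooting. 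It holds because rerooting only changes the orientation of the edges on the path from $o$ to $r$, and that path leaves $o$ through $o'$, not through $o''$.

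Finally, the degrees. The root $o$ has degree $d_0$. A vertex in layer $i$ with $1\le i<k$ has degree $d_i+1$ (its children plus its parent). A leaf has degree $1$, which never exceeds the other degrees since $d_0\ge2$. Taking the maximum gives \eqref{eq:symmetric-degree}, with the second maximum empty when $k=1$.
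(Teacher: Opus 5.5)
Your proposal is correct and follows essentially the same route as the paper: you unroll the layer recursion $w_i=d_i^{1/(p-1)}(w_{i+1}+1)$, sum it, and identify the terminal products as the $j=k-1$ terms of $S$; you get minimality of $o$ from a deepest leaf in a branch of $o$ not containing $r$, whose natural-root weights are unchanged by rerooting. Your extra observation that $o$ itself contributes $w_r(o)\ge1$ gives the strict inequality $T_G(r)>T_G(o)$ for $r\ne o$, which is slightly more than the paper records.
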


\begin{proof}
Let $W_i$ be the common natural-root weight in layer $i$.  Then
\begin{equation}\label{eq:symmetric-recurrence}
 W_k=1,\qquad W_i=d_i^{1/(p-1)}(W_{i+1}+1)\quad(1\le i<k),
\end{equation}
and $T_G(o)=\sum_{i=1}^kW_i$.  If $r\ne o$, it lies in one of the
$d_0$ first-level branches.  Choose a deepest leaf in a different branch.
Along the part of its path leading to $o$, all orientations and weights agree
with the natural rooting.  Hence $T_G(r)\ge\sum_{i=1}^kW_i=T_G(o)$.

Expanding \eqref{eq:symmetric-recurrence} gives
\[
 W_i=\sum_{j=i}^{k-1}(d_i\cdots d_j)^{1/(p-1)}
 +(d_i\cdots d_{k-1})^{1/(p-1)},\qquad 1\le i<k.
\]
After summing over $i$ and adding $W_k=1$, the first double sum is $S$ and
the terminal products form a subcollection of the terms of $S$.  This proves
\eqref{eq:symmetric-T-comparison}.  Formula
\eqref{eq:symmetric-degree} follows by distinguishing the root, internal
layers, and leaves.
\end{proof}

\begin{cor}\label{cor:symmetric-tree-consensus}
Under the assumptions of Proposition~\ref{prop:symmetric-tree-geometry}, the
upper bound in Corollary~\ref{cor:tree-consensus-upper} and the lower bounds
in Theorem~\ref{thm:tree-lower} and Corollary~\ref{cor:tree-worst-lower} hold
with $T_G$ replaced, up to an absolute factor two, by $1+S$ and with
$\Delta$ given by \eqref{eq:symmetric-degree}.
\end{cor}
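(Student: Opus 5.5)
This corollary is a direct substitution, so the plan is simply to insert the two-sided comparison of Proposition~\ref{prop:symmetric-tree-geometry} into the general tree bounds. There is no new dynamical argument. We use \eqref{eq:symmetric-T-comparison}, namely $1+S\le T_G\le 1+2S$. This gives $T_G\le 2(1+S)$, because $1+2S\le 2+2S$, and $T_G\ge 1+S$. So $T_G$ and $1+S$ agree up to the absolute factor two, uniformly in $p$ and in the branching sequence.

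For the upper bound, we substitute $T_G\le2(1+S)$ into \eqref{eq:tree-consensus-upper} of Corollary~\ref{cor:tree-consensus-upper}. This yields
\[
 \E\tau_p(\epsilon;f_0)\le 2C_pn(1+S)\log(n/\epsilon)
\]
times $\log(en)^{1/(p-1)}$ when $1<p<2$, and times $1$ when $p\ge2$. The factor two is absorbed into $C_p$.

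For the lower bounds, we substitute $T_G\ge1+S$ into Theorem~\ref{thm:tree-lower}. This gives a range-one profile with $\E\tau_p(1/2;f_0)\ge c_pn(1+S)/\Delta$. Here $\Delta$ is the actual maximum degree, which \eqref{eq:symmetric-degree} identifies as $\max\{d_0,1+\max_{1\le i<k}d_i\}$. Corollary~\ref{cor:tree-worst-lower} then becomes
\[
 \mathsf T_p(G,1/2)\ge c_pn\max\{\log n,(1+S)/\Delta_G\}.
\]

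The only point needing care is that the proposition's hypotheses ($d_0\ge2$ and $d_i\ge1$) are precisely those assumed in the corollary. These hypotheses guarantee that the tree is nontrivial, so the general tree results apply. I expect no real obstacle here, only the bookkeeping of which direction of \eqref{eq:symmetric-T-comparison} feeds into which bound.
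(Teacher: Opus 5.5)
Your proposal is correct and matches the intended argument; the paper gives no separate proof. The corollary follows by inserting $1+S\le T_G\le 1+2S\le 2(1+S)$ from Proposition~\ref{prop:symmetric-tree-geometry} into Corollary~\ref{cor:tree-consensus-upper}, Theorem~\ref{thm:tree-lower}, and Corollary~\ref{cor:tree-worst-lower}, with $\Delta$ read off from \eqref{eq:symmetric-degree}. You also track correctly that the factor two is needed only in the upper bound.
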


\begin{rem}[Why unary stems are excluded]\label{rem:unary-stem}
When $d_0=1$, the natural root need not minimize \eqref{eq:T-root}, and the
one-sided product formula must not be used.  For example, at $p=2$ the
branching sequence $(1,1,1,3,3)$ has natural-root value $73$.  If $r'$ is the
first branching vertex, a direct calculation gives $T_G(r')=7$, and hence
$T_G\le7<73$; this is all that is needed to show that the natural root is not
a minimizer.  The general theorem remains valid because its definition
minimizes over all roots.
\end{rem}

\section{Conductance expanders}\label{sec:expanders}

For $A\subseteq V$, write
$\operatorname{vol}(A)=\sum_{u\in A}\deg(u)$ and let $\partial A$ be the
set of edges with exactly one endpoint in $A$.  The volume conductance is
\begin{equation}\label{eq:conductance}
 h(G)=\min_{0<\operatorname{vol}(A)\le\operatorname{vol}(V)/2}
 \frac{|\partial A|}{\operatorname{vol}(A)}.
\end{equation}

\begin{prop}[Scalar Poincar\'e inequality from conductance]
\label{prop:conductance-poincare}
For every $1<p<\infty$,
\begin{equation}\label{eq:conductance-Phi}
 \Phi_p(G)\le C_p h(G)^{-p}.
\end{equation}
Consequently,
\begin{equation}\label{eq:conductance-gap}
 \lambda_p(G)\ge c_p
 \begin{cases}
  h(G)^{p/(p-1)},&1<p<2,\\
  h(G)^2,&p\ge2.
 \end{cases}
\end{equation}
\end{prop}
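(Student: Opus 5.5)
The plan is the standard $p$-Cheeger coarea argument applied to a volume median, followed by Theorem~\ref{thm:nonlinear-poincare-comparison}. Fix a nonconstant $f$ and let $m$ be a volume median of $f$:
\[
 \operatorname{vol}\{f>m\}\le\operatorname{vol}(V)/2,
 \qquad
 \operatorname{vol}\{f<m\}\le\operatorname{vol}(V)/2 .
\]
Since the numerator of \eqref{eq:Phi-p} is an infimum over $c$, it suffices to bound $\sum_u\deg(u)|f(u)-m|^p$. Split $f-m=g_+-g_-$ with $g_\pm\ge0$. Because
\[
 |\nabla_e g_+|^p+|\nabla_e g_-|^p\le|\nabla_e f|^p
\]
on every edge, it is enough to prove, for a nonnegative $g$ whose support has volume at most $\operatorname{vol}(V)/2$,
\[
 \sum_u\deg(u)g(u)^p\le C_ph^{-p}\cE_p(g).
\]

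First I apply the coarea formula to $\psi=g^p$. Every superlevel set $\{\psi>s\}$ with $s>0$ lies inside the support, so its volume is at most $\operatorname{vol}(V)/2$. The definition of $h(G)$ then gives
\[
 \sum_u\deg(u)\psi(u)
 =\int_0^\infty\operatorname{vol}\{\psi>s\}\,ds
 \le h^{-1}\int_0^\infty|\partial\{\psi>s\}|\,ds
 =h^{-1}\sum_{\{u,v\}\in E}|g(u)^p-g(v)^p|.
\]
Next I use the mean-value bound
\[
 |x^p-y^p|\le p|x-y|(x^{p-1}+y^{p-1}).
\]
Applying H\"older with exponents $p$ and $q=p/(p-1)$ to the edge sum, and noting that $\sum_{e}(x^{p-1}+y^{p-1})^q\le 2^{q}\sum_u\deg(u)g(u)^p$, I get
\[
 \sum_u\deg(u)g(u)^p\le C_ph^{-1}\cE_p(g)^{1/p}\Bigl(\sum_u\deg(u)g(u)^p\Bigr)^{1/q}.
\]
Dividing through yields $\sum_u\deg(u)g(u)^p\le C_ph^{-p}\cE_p(g)$. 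Adding the bounds for $g_+$ and $g_-$ then gives \eqref{eq:conductance-Phi}.

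Finally, I substitute \eqref{eq:conductance-Phi} into Theorem~\ref{thm:nonlinear-poincare-comparison}. For $1<p<2$ this gives $\lambda_p\ge c_p(h^{-p})^{-1/(p-1)}=c_ph^{p/(p-1)}$. For $p\ge2$ it gives $\lambda_p\ge c_p(h^{-p})^{-2/p}=c_ph^2$.

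The main point needing care is the choice of a volume median rather than a mean. This choice is what puts every level set on the small side of the conductance constraint, so no maximum-degree factor enters. I expect the rest to be routine, including the edge case where $g$ vanishes identically, which is trivial.
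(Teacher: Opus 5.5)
Your proposal is correct and follows essentially the same route as the paper: a degree-measure median, the coarea/conductance bound for each truncation $(f-m)_\pm$, the estimate $|a^p-b^p|\le p|a-b|(a^{p-1}+b^{p-1})$ with H\"older, and then Theorem~\ref{thm:nonlinear-poincare-comparison}. The only cosmetic difference is that you apply coarea directly to $g^p$, whereas the paper writes the layer-cake integral with weight $pt^{p-1}$; the two are equivalent under the substitution $s=t^p$.
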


\begin{proof}
Let $m$ be a median of $f$ for the degree measure, so that both
$\{f>m\}$ and $\{f<m\}$ have volume at most half the total.  Put
$g=(f-m)_+$.  The layer-cake formula, the conductance inequality applied to
$\{g>t\}$, and edge coarea give
\begin{align}
 h(G)\sum_u\deg(u)g(u)^p
 &\le\int_0^\infty pt^{p-1}|\partial\{g>t\}|\,dt\notag\\
 &=\sum_{\{u,v\}\in E}|g(u)^p-g(v)^p|.\label{eq:coarea-p}
\end{align}
For nonnegative $a,b$,
$|a^p-b^p|\le p|a-b|(a^{p-1}+b^{p-1})$.  H\"older's inequality on edges and
$(a^{p-1}+b^{p-1})^{p/(p-1)}\le C_p(a^p+b^p)$ therefore imply
\[
 \sum_{\{u,v\}\in E}|g(u)^p-g(v)^p|
 \le C_p\cE_p(g)^{1/p}
 \left(\sum_u\deg(u)g(u)^p\right)^{(p-1)/p}.
\]
If $g\equiv0$, \eqref{eq:positive-cheeger} below is immediate.  Otherwise,
$\sum_u\deg(u)g(u)^p>0$, so combining this estimate with
\eqref{eq:coarea-p} and dividing by its $(p-1)/p$ power gives
\begin{equation}\label{eq:positive-cheeger}
 \sum_u\deg(u)g(u)^p\le(C_p/h(G))^p\cE_p(g).
\end{equation}
Apply the same argument to $(m-f)_+$, with the inequality again immediate if
this truncation vanishes identically and division performed only otherwise.
On every edge, the sum of the two truncated $p$-energies is at most
$|f(u)-f(v)|^p$.  Adding the two copies of \eqref{eq:positive-cheeger} yields
\[
 \sum_u\deg(u)|f(u)-m|^p\le C_ph(G)^{-p}\cE_p(f).
\]
Since the infimum over constants is no larger than its value at the chosen
median,
\[
 \inf_{c\in\R}\sum_u\deg(u)|f(u)-c|^p
 \le \sum_u\deg(u)|f(u)-m|^p.
\]
This proves \eqref{eq:conductance-Phi}.  Equation
\eqref{eq:conductance-gap} follows from
Theorem~\ref{thm:nonlinear-poincare-comparison}.
\end{proof}

\begin{thm}[Consensus on conductance expanders]
\label{thm:expander-consensus}
Fix $1<p<\infty$.  Let $G$ have $n$ vertices and satisfy $h(G)\ge h_0>0$.
For every $f_0\in[-1,1]^V$ and $0<\epsilon\le1$,
\begin{equation}\label{eq:expander-upper}
 \E\tau_p(\epsilon;f_0)\le C_{p,h_0}n\log(n/\epsilon).
\end{equation}
Moreover, at tolerance $1/2$ the globally defined worst-profile consensus
time satisfies
\begin{equation}\label{eq:expander-lower}
 cn\log n
 \le \mathsf T_p(G,1/2)
 \le C_{p,h_0}n\log n.
\end{equation}
Thus, for fixed $p$ and $h_0$, the worst-profile consensus time at fixed
tolerance is exactly of order $n\log n$, with no degree assumption.
\end{thm}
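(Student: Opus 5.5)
The plan is to obtain the upper bound \eqref{eq:expander-upper} by combining three earlier results: Proposition~\ref{prop:conductance-poincare}, Theorem~\ref{thm:variational-upper}, and the coupon observation of Section~\ref{sec:lower}.

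\emph{Step 1: the relaxation gap.} Proposition~\ref{prop:conductance-poincare} with $h(G)\ge h_0$ gives
\[
 \lambda_p(G)\ge c_p\min\{h_0^{p/(p-1)},h_0^2\}.
\]
This right side depends only on $(p,h_0)$. Write it as $c_{p,h_0}>0$. The important point is that neither the proposition nor Theorem~\ref{thm:nonlinear-poincare-comparison} uses the maximum degree. The only graph quantity entering the comparison is the degree-weighted constant $\Phi_p(G)$, and the coarea argument bounds it purely through the conductance.

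\emph{Step 2: the upper bound.} Substitute this gap into Theorem~\ref{thm:variational-upper}. The prefactor becomes $n/\lambda_p(G)\le C_{p,h_0}n$. The logarithmic term is
\[
 \log\bigl(2^p|E|D^{p-1}/\epsilon^p\bigr)+3.
\]
I will bound it using only $|E|\le n^2/2$ and $D\le n-1$, which give at most $C_p\log(en/\epsilon)$. Since $n\ge2$ and $\epsilon\le1$, the quantity $\log(en/\epsilon)$ is comparable to $\log(n/\epsilon)$ up to an absolute factor. This proves \eqref{eq:expander-upper}.

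One could sharpen the logarithmic term, since expanders have diameter $O_{h_0}(\log n)$, but this is unnecessary: the crude polynomial bounds already yield a single $\log(n/\epsilon)$.

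\emph{Step 3: the two-sided bound at tolerance $1/2$.} Taking $\epsilon=1/2$ and the supremum over $f_0\in[-1,1]^V$ in Step 2 gives
\[
 \mathsf T_p(G,1/2)\le C_{p,h_0}n\log(2n)\le C_{p,h_0}'n\log n,
\]
using $n\ge2$. For the lower bound, the coupon observation in Section~\ref{sec:lower}, applied with $\epsilon=1/2<1$, gives
\[
 \sup_{f_0\in\{0,1\}^V}\E\tau_p(1/2;f_0)\ge\tfrac n4\log n.
\]
Since $\{0,1\}^V\subseteq[-1,1]^V$, this is a lower bound for $\mathsf T_p(G,1/2)$, and it holds for every graph with $c=1/4$. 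Combining the two bounds yields \eqref{eq:expander-lower}.

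I do not expect a genuine obstacle, because all the substantive work (the coarea-based Poincar\'e inequality and the comparison of the exact drop with the residual) is already done in earlier results. The only step that needs care is Step 1: one must confirm that no hidden $\Delta$-dependence enters through Lemma~\ref{lem:local-drop-comparison} or Theorem~\ref{thm:nonlinear-poincare-comparison}. Their constants are $c_p$ and degrees appear only inside the weighted quantities, so this check should go through.
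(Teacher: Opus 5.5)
Your proposal is correct and follows the paper's proof essentially verbatim. The conductance Poincar\'e inequality gives a $(p,h_0)$-dependent lower bound on $\lambda_p(G)$. Theorem~\ref{thm:variational-upper}, with polynomial bounds on $|E|$ and $D$, then gives the upper bound, and the coupon observation supplies the $cn\log n$ lower bound; your extra bookkeeping (comparing $\log(en/\epsilon)$ with $\log(n/\epsilon)$ and checking that no $\Delta$-dependence enters) only makes explicit what the paper leaves implicit.
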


\begin{proof}
Proposition~\ref{prop:conductance-poincare} gives a positive lower bound on
$\lambda_p(G)$ depending only on $p$ and $h_0$.  In
Theorem~\ref{thm:variational-upper}, the remaining logarithm is
$O_p(\log(n/\epsilon))$, proving \eqref{eq:expander-upper}.

The upper inequality in \eqref{eq:expander-lower} is
\eqref{eq:expander-upper} with $\epsilon=1/2$.  The lower inequality is the
coupon observation in Section~\ref{sec:lower}.
\end{proof}

\section{Conclusion and open problems}\label{sec:conclusion}

The exact relaxation gap gives a nonsingular upper-bound framework for all
three graph families.  Coupon collection and causal propagation provide most
of the lower bounds, while the sharp quadratic box lower bound uses the
spectral estimate for asynchronous DeGroot dynamics.  On boxes, the explicit
slow profiles, that spectral input, and the tensor Poincar\'e inequality
identify the polynomial scale up to logarithmic factors.  If logarithms are
suppressed, the causal-profile comparison can leave at most a factor $d$, and
the gap is smaller when the coupon term dominates.  On trees, the
rerooting-invariant quantity $T_G=T_G^{(p)}$ controls both the energy
telescoping and a range-one slow profile.  This gives a polylogarithmic
near-characterization for bounded-degree trees; for arbitrary-degree trees the
remaining factor $\Delta$ can be polynomial in $n$.  On conductance expanders,
the Poincar\'e constant is uniformly bounded and the coupon obstruction makes
the worst-profile fixed-tolerance consensus time exactly of order $n\log n$.

There is also an all-graph consequence beyond the three families.  For every
$2\le p<\infty$, Corollary~\ref{cor:universal-p-comparison} gives
\[
 \mathsf T_p(G,\epsilon)
 \le C_p\log\!\left(\frac{en}{\epsilon}\right)
       \mathsf T_2(G,\epsilon),
 \qquad 0<\epsilon\le\frac12.
\]
This compares independently maximized worst-profile times.  It is therefore a
partial answer to
\cite[Question~6.1]{amir2025convergencerateellpenergyminimization}, which asks
for a comparison for each common initial profile.

Both questions below concern the genuinely nonlinear dynamics and exclude
$p=2$, which has additional linear semigroup and spectral tools.  The first
question also includes the endpoint $p=\infty$.  Write
\[
  \mathsf T_p(G,\epsilon)
  :=\sup_{f_0\in[-1,1]^V}\E\tau_p(\epsilon;f_0).
\]

\begin{question}[Removing the maximum-degree loss]
\label{question:remove-delta}
Fix $p\in(1,\infty]\setminus\{2\}$.  Can the factor $\Delta$ be removed from
the propagation lower bound in Theorem~\ref{thm:propagation-lower}?  More
precisely, is there a constant $c_p>0$ such that, for every finite connected
graph, every nonconstant profile $f_0$, and every
$0<\epsilon\le\osc(f_0)/2$,
\begin{equation}
  \E\tau_p(\epsilon;f_0)
  \ge c_p\frac{n\osc(f_0)}{\lVert Sf_0-f_0\rVert_\infty}?
  \label{eq:conjectural-degree-free-propagation}
\end{equation}
The loss in the present proof occurs when all nearest-neighbor causal chains
are counted by a union bound.  A proof of
\eqref{eq:conjectural-degree-free-propagation} would have to average competing
influences without paying for the maximum number of possible next steps.

\end{question}

For the remainder, fix $p\in(1,\infty)\setminus\{2\}$.  Define the
unit-oscillation defect
\begin{equation}
  \kappa_p(G)
  :=\inf\bigl\{\lVert Sf-f\rVert_\infty:\osc(f)=1\bigr\}.
  \label{eq:kappa-p}
\end{equation}
After normalizing $\min f=0$ and $\max f=1$, compactness and the maximum
principle show that the infimum is attained and is positive.  Theorem
\ref{thm:propagation-lower} gives
\begin{equation}
  \mathsf T_p(G,1/2)
  \gtrsim_p \frac{n}{\Delta\kappa_p(G)},
  \label{eq:kappa-current-lower}
\end{equation}
whereas Theorem~\ref{thm:variational-upper} gives
\begin{equation}
  \mathsf T_p(G,1/2)
  \lesssim_p \frac{n}{\lambda_p(G)}\log(en).
  \label{eq:lambda-current-upper}
\end{equation}

\begin{question}[A universal nonlinear consensus parameter]
\label{question:universal-parameter}
For fixed $p\in(1,\infty)\setminus\{2\}$, is there an intrinsic graph
quantity that characterizes $\mathsf T_p(G,1/2)$ on every finite connected
graph, up to constants and logarithmic factors?  In particular, does
\begin{equation}
  \mathfrak C_p(G)
  :=n\kappa_p(G)^{-1}
  \label{eq:defect-condition-number}
\end{equation}
have this property?

\end{question}

The residual gap in the box comparison---at most a factor $d$ after
suppressing logarithms, and smaller when the coupon term dominates---and the
degree and polylogarithmic losses on trees are concrete tests for
Question~\ref{question:universal-parameter}.  An
affirmative answer to Question~\ref{question:remove-delta} would also remove
the maximum-degree loss from the general $p=\infty$ diameter lower bound.

\bibliographystyle{plain}
\bibliography{bibliography}

\end{document}